\newcommand\Modified{Jan 21, 2015 }
\newcommand\datver[1]{\def\datverp%
{\par\boxed{\boxed{\text{Version: #1; Run: \today}}}}}
\newtheorem{theorem}{Theorem}
\newtheorem{definition}[theorem]{Definition}
\newtheorem{corollary}[theorem]{Corollary}
\newtheorem{example}[theorem]{Example}
\newtheorem{lemma}[theorem]{Lemma}
\newtheorem{proposition}[theorem]{Proposition}
\newtheorem{remark}[theorem]{Remark}
\definecolor{darkgreen}{cmyk}{1,0,1,.2}
\definecolor{m}{rgb}{1,0.1,1}
\renewcommand{\hat}[1]{\widehat{#1}}
\renewcommand{\tilde}{\widetilde}
\newcommand\eps\varepsilon
\newcommand\pa{\partial}
\newcommand \R {\mathbb{R}}
\newcommand \C {\mathbb{C}}
\newcommand\Diff{\operatorname{Diff}}
\newcommand\id{\operatorname{id}}
\newcommand\Id{\operatorname{Id}}
\newcommand{\Ind}{\operatorname{Ind}}
\newcommand{\Ker}{\operatorname{Ker}}
\newcommand{\calU}{{\mathcal U}}
\newcommand{\frakS}{{\mathfrak S}}
\newcommand{\ovl}{\overline}
\DeclareMathAlphabet{\mathpzc}{OT1}{pzc}{m}{it}
\newcommand\paperintro%
\newcommand\paperbody%
\newcommand\bbC{\mathbb{C}}
\newcommand\bbK{\mathbb{K}}
\def\RR{\bbR}
\newcommand\bbR{\mathbb{R}}
\newcommand\bbZ{\mathbb{Z}}
\newcommand\N{\mathbb{N}}
\begin{document}

\title{Singular spaces, groupoids and metrics of positive scalar curvature}

\author{Paolo Piazza}
\address{Dipartimento di Matematica, Sapienza Universit\`a di Roma}
\email{piazza@mat.uniroma1.it}
\author{Vito Felice Zenobi}
\address{Georg-August-Universit\"at
G\"ottingen, Germany}
\email{zenobivf@gmail.com}



\keywords{Stratified spaces,  groupoids, K-theory, higher index theory, secondary invariants, singular foliations,
positive scalar curvature, concordance.}

\maketitle
\begin{abstract}

We define and study, under suitable assumptions, the fundamental class, the index class and the rho class
of a spin Dirac operator on the regular part of a spin stratified pseudomanifold.
 More singular structures, such as singular foliations,
are also  treated. We employ groupoid techniques
 in a crucial way; however, an effort has been made in order to make this article
 accessible to readers with only a minimal knowledge of groupoids. 
 Finally, whenever appropriate,
 a comparison between
 classical microlocal methods and groupoids methods has been provided.
 
 \end{abstract}

\tableofcontents

\paperintro
\section{Introduction}

\subsection{Smooth closed manifolds and the Higson-Roe analytic surgery sequence}$\;$\\
Let $(X,g)$ be a spin riemannian manifold with fundamental group $\pi_1 (X)=:\Gamma$. We fix a spin structure
$s$.
We denote by $X_\Gamma$ its universal cover. We consider $\slashed{D}_g$, the Dirac operator 
associated to $g$ and $s$, and $\slashed{D}^\Gamma_g$ its $\Gamma$-equivariant lift to $X_\Gamma$.
Thanks to the work of Atiyah, Kasparov, Mishchenko and many others we know that we can attach to
these data two  
K-theory classes:
\begin{itemize}
\item the fundamental class $[\slashed{D}_g]\in K_* (X)$;
\item the index class $\Ind (\slashed{D}_g^\Gamma)\in K_* (C^*_r \Gamma)$
\end{itemize}
with $*=\dim X$.
The index class is particularly interesting in connection with the existence of metrics 
of positive scalar curvature; indeed, on the one hand 
the class $\Ind (\slashed{D}_g^\Gamma)\in K_* (C^*_r \Gamma)$
is independent of the choice of $g$ and,  on the other hand, $\Ind (\slashed{D}_g^\Gamma)=0$
if $g$ has positive scalar curvature, a result 
based on a generalization to the $C^*$-context of  the well-known Lichnerowicz 
argument. See for example \cite[Theorem 1.1]{RosI} for the details.
This means that the class $\Ind (\slashed{D}_g^\Gamma)\in K_* (C^*_r \Gamma)$
is an {\it obstruction} to the existence of {\it a} metric of positive scalar curvature on $X$ \footnote{One can also consider 
a more refined object, the class defined by the real Dirac operator in 
$KO_* (C^*_{r,\RR} (\Gamma))$; however, in this paper we shall bound ourselves to the complex case.}.

There are in fact many equivalent realizations of these classes. One that is particularly elegant
and that we would like to single out
at this point is 
due to Nigel Higson and John Roe; it stems from a long exact sequence of $K$-theory groups
for $C^*$-algebras.
 The $C^*$-algebras in question are $C^*(X_{\Gamma})^{\Gamma}$ and $D^*(X_{\Gamma})^{\Gamma}$, obtained as the closures of the $\Gamma$-equivariant operators on $X_{\Gamma}$ 
 that satisfty a  finite propagation property and, in addition, are respectively  `locally compact' or  `pseudolocal'.
 The former $C^*$-algebra is an ideal in the latter; thus we have a short exact sequence of $C^*$-algebras
%
	$0\rightarrow  C^*(X_\Gamma)^\Gamma\rightarrow 
	D^*(X_\Gamma)^\Gamma\rightarrow 
	D^*(X_\Gamma)^\Gamma/C^*(X_\Gamma)^\Gamma\rightarrow 0$
%
which gives rise to a long exact sequence in K-theory known as the {\em analytic surgery sequence} of Higson and Roe:
\begin{equation}\label{eq:HR-basic}
	\cdots\rightarrow K_{m+1}  (C^*(X_\Gamma)^\Gamma)
	\rightarrow K_{m+1}  (D^*(X_\Gamma)^\Gamma)
	\rightarrow  K_{m+1}( D^*(X_\Gamma)^\Gamma/C^*(X_\Gamma)^\Gamma)
	\rightarrow K_m (C^*(X_\Gamma)^\Gamma)
	\rightarrow \cdots
\end{equation}
Crucial to the development of the theory are the canonical isomorphisms
\begin{equation}\label{iso-K}
	K_{*+1}  (D^*(X_\Gamma)^\Gamma/C^*(X_\Gamma)^\Gamma)= K_{*} (X)
	\quad\text{and}\quad  
	K_{*}  (C^*(X_\Gamma)^\Gamma)=K_* (C^*_r \Gamma)\,;
\end{equation}
%
using these, the analytic surgery sequence 
can be rewritten as 
\begin{equation}\label{eq:HR}
	\cdots\rightarrow K_{m+1}  (C^*_r\Gamma)
	\rightarrow K_{m+1}  (D^*(X_\Gamma)^\Gamma)
	\rightarrow  K_{m}(X)
	\rightarrow K_m (C^*_r\Gamma)
	\rightarrow \cdots
\end{equation}
By employing  ellipticity and the finite propagation speed property for the wave operator associated to
$\slashed{D}^\Gamma_g$, one can define an element in $K_{*+1}  (D^*(X_\Gamma)^\Gamma/C^*(X_\Gamma)^\Gamma)$ and this element corresponds precisely to the fundamental class $[\slashed{D}_g]$ under the first isomorphism
in \eqref{iso-K}. The index class is then realized
as
\begin{equation}\label{delta-fc-index}
\Ind (\slashed{D}^\Gamma_g)=\delta [\slashed{D}_g]\in K_{*}  (C^*(X_\Gamma)^\Gamma)=K_* (C^*_r \Gamma)\,,
\end{equation}
with $\delta: K_{*+1}  (D^*(X_\Gamma)^\Gamma/C^*(X_\Gamma)^\Gamma)\rightarrow 	K_{*}  (C^*(X_\Gamma)^\Gamma)$ the connecting homomorphism in the analytic surgery sequence.

Assume now  that $g$ is of positive scalar curvature; then we know that  
$\delta [\slashed{D}_g]=0$ in  $K_{*}  (C^*(X_\Gamma)^\Gamma)$; this means that
there exists a {\it lift} of $ [\slashed{D}_g]$ to $K_{m+1}  (D^*(X_\Gamma)^\Gamma)$. In fact, 
 since the Lichnerowicz argument shows that the operator
$\slashed{D}^\Gamma_g$ is $L^2$-invertible, it is easy to see that there exists a natural lift;
we call this lift the {\it rho class of the positive scalar curvature metric $g$} and denote it by
$\rho(g)\in K_{*+1}  (D^*(X_\Gamma)^\Gamma)$. This is a (higher) {\it secondary invariant} of $\slashed{D}^\Gamma_g$.\\
Thus, if $g$ is of positive scalar curvature
$$
\xymatrix{ K_{*+1}  (D^*(X_\Gamma)^\Gamma)\ar[r] & K_{*+1}  (D^*(X_\Gamma)^\Gamma/C^*(X_\Gamma)^\Gamma)\ar[r] & K_{*}  (C^*(X_\Gamma)^\Gamma) \\
	\rho(g) & [\slashed{D}_g]\ar@/_1pc/@{.>}[l]\ar[r]^\delta& 0}
$$

\medskip
\noindent
{\bf Summarizing}, given an arbitrary metric $g$ we can define
\begin{equation}\label{delta-fc-index-bis}
 [\slashed{D}_g]\in K_{*} (X)\;\;\;\;\text{and}\;\;\;\;
 \Ind (\slashed{D}^\Gamma_g)=\delta [\slashed{D}_g]\in K_* (C^*_r \Gamma)
\end{equation}
and if $g$ is of positive scalar curvature metric  then we have
\begin{equation}\label{delta-fc-index}
\rho(g)\in K_{*+1}  (D^*(X_\Gamma)^\Gamma)
\end{equation}
as a lift of $[\slashed{D}_g]$ in \eqref{eq:HR}.
It should be noticed that if $g$ is of  positive scalar curvature then the classic APS rho invariant and the 
Cheeger-Gromov rho invariant can be reobtained from $\rho (g)$ by applying suitable traces to 
$ K_{*+1}  (D^*(X_\Gamma)^\Gamma)
$. This is work
of Higson-Roe for the former invariant, see \cite{higson-roe4}, and Benameur-Roy for the latter, see \cite{BR}.
Conjecturally, the  rho class defined above should also give Lott's higher rho invariant $\rho_{{\rm Lott}} (g)$,
see \cite{LottII},
an element in the delocalized noncommutative de Rham homology of the Connes-Moscovici algebra of $\Gamma$.

Building on fundamental work of Higson and Roe 
for the signature operator, see  \cite{higson-roeI}, \cite{higson-roeII}, \cite{higson-roeIII}, it was proved in \cite{PS-Stolz} that these three classes can be employed in order
to map Stolz' surgery sequence for positive scalar curvature metrics to the analytic surgery sequence 
\eqref{eq:HR}.
In particular, one can prove that the rho class gives well-defined maps
\begin{equation}\label{rho-map}
\rho:\pi_0 (\mathcal{R}^+ (X))\to K_{\dim X+1}  (D^*(X_\Gamma)^\Gamma)
\;\;\;\text{and}\;\; \;
\rho:\widetilde{\pi}_0 (\mathcal{R}^+ (X))\to K_{\dim X+1}  (D^*(X_\Gamma)^\Gamma)
\end{equation}
with $\mathcal{R}^+ (X)$ denoting the space of positive scalar curvature metrics on $X$ and 
$\widetilde{\pi}_0 (\mathcal{R}^+ (X))$ the associated set of concordance classes.

Crucial to the proof of the well-definedness of \eqref{rho-map} is the {\it delocalized Atiyah-Patodi-Singer
index theorem} proved in  \cite{PS-Stolz}, a result that establishes a link between the index class of a spin
riemannian manifold with boundary, with the boundary having positive scalar curvature, and the rho class of the boundary metric.

An alternative treatment of these results was later given by Xie and Yu, see \cite{Xie-Yu}, using Yu's localization
algebra. 

We cannot  end this subsection without  mentioning the very interesting geometric applications of these various rho invariants
 to the world of positive scalar curvature metrics.
Without entering into the details we list here a number of articles that use (higher) rho invariants in order to
{\it distinguish} metrics of positive scalar curvature up to bordism, concordance or isotopy: Botvinnik-Gilkey \cite{BoGi}, Leichtnam-Piazza \cite{LPPSC}, Azzali-Wahl \cite{AW}, Benameur-Mathai \cite{BenMat}, Piazza-Schick \cite{PS2}, Zeidler \cite{zeidler-jtop}, Weinberger-Yu
\cite{WY-finite}, Xie-Yu \cite{Xie-Yu-moduli}, Barcenas-Zeidler \cite{Barcenas-Zeidler}, Xie-Yu-Zeidler \cite{XYZ}, Zenobi \cite{ZenJTA}.

\subsection{Enter  groupoids}$\;$\\
There is yet another approach to the three classes 
$$[\slashed{D}_g]\,, \quad\Ind (\slashed{D}^\Gamma_g)\quad\text{and} \quad\rho (g)$$ and this alternative approach involves Lie groupoids in a fundamental way. It was developed by Zenobi in his Ph.D. thesis, see  \cite{zenobi-thesis} \cite{Zenobi:Ad}.
The cornerstone of this method is the concept of adiabatic deformation
of a Lie groupoid, a concept first introduced by Alain Connes in \cite{Co} and then developed
by many others in different contexts; this will be treated later in the paper, but we want to give  at least  the basic idea
now.
First, a couple of preliminary remarks:\\
(i)  the quantization of a symbol defines a Poincar\'e isomorphism 
\begin{equation}\label{PD-K}
PD: K^* (T^*X) \rightarrow K_* (X)\, ,
\end{equation}
see \cite{Kasp-Conspectus,CS}.
Thus we can equivalently think of the fundamental class $[\slashed{D}_g]$ as an element in 
$K^* (T^*X)$ which is in turn isomorphic to $K_* (C_0 (T^*X))$.\\
(ii) The index class $\Ind (\slashed{D}^\Gamma_g)$ is defined using a $\Gamma$-equivariant parametrix 
with remainders that are given by $\Gamma$-invariant smooth kernels on 
$X_\Gamma \times X_\Gamma$ with compact support in $
X_\Gamma \times X_\Gamma/\Gamma:= X_\Gamma \times_\Gamma X_\Gamma$. Thus we are led to consider the groupoid $G \rightrightarrows X$, 
$$G:=X_\Gamma \times_\Gamma X_\Gamma\rightrightarrows X$$
with source and range maps given by $s[x,y]=p(y)$ and $r[x,y]=p(x)$
where $p:X_\Gamma\to X$ is the projection of the Galois covering. We shall give the necessary definitions later.
Consider now the {\it adiabatic deformation} of $G\equiv X_\Gamma \times_\Gamma X_\Gamma\rightrightarrows X$.
This is the groupoid $G^{[0,1]}_{ad}$
 over $X$ given, as a set, by
$$ TX\times\{0\}\cup X_\Gamma \times_\Gamma X_\Gamma\times (0,1] \rightrightarrows X$$
with range and source maps given at $0$ by the projection $\pi: TX\to X$
 and by $r$ and $s$ introduced above on the rest of the deformation \footnote{Each vector bundle $E\xrightarrow{\pi} X$ gives in a natural way a groupoid, with both the range and the source map equal to $\pi$; the groupoid law is then the vector sum along the fibers of $E$.}.
One can define a natural smooth structure on the set $ TX\times\{0\}\cup X_\Gamma \times_\Gamma X_\Gamma\times (0,1]$. We shall in fact be interested in the groupoid 
$$G^{[0,1)}_{ad}:= TX\times\{0\}\cup X_\Gamma \times_\Gamma X_\Gamma\times (0,1) \rightrightarrows X$$
Now, as we shall explain, one can attach a $C^*$-algebra to a groupoid; the evaluation at $0$ for
$G^{[0,1)}_{ad}$  then induces 
a short exact sequence of $C^*$-algebras \\
$$0\rightarrow  C^*_r ( X_\Gamma \times_\Gamma X_\Gamma)\otimes C_0 (0,1) \rightarrow C^* ( G^{[0,1)}_{ad})\xrightarrow{{\rm ev}_0} C^*_r (TX)
\rightarrow 0$$
with associated long exact sequence in K-theory given by 
\begin{equation}\label{ad-e-s}
\begin{split}
\cdots \rightarrow K_* (C^*_r ( X_\Gamma \times_\Gamma X_\Gamma)\otimes C_0 (0,1)) \rightarrow K_* ( C^*_r ( G^{[0,1)}_{ad}) )\xrightarrow{({\rm ev}_0)_*} K_* (C^*_r (TX))\\\xrightarrow{\delta_{ad}}K_{*+1} (C^*_r ( X_\Gamma \times_\Gamma X_\Gamma)\otimes C_0 (0,1))
\rightarrow \cdots
\end{split}\end{equation}
In the above long exact sequence  $TX\xrightarrow{\pi} X$ is viewed as a groupoid, $TX\rightrightarrows X$, with $r=\pi=s$
and $C^* (TX)$ is the $C^*$-algebra of this groupoid. It is easy to see that, as $C^*$-algebras, $C_r^* (TX)\simeq C_0 (T^* X)$; we obtain in this
way  natural identifications
$K_* (C_r^* (TX))= K_* (C_0 (T^* X))=K^* (T^* X)$.
It is also true, and easy to prove,   that if the action of $\Gamma$ is cocompact then $K_* (C^*_r ( X_\Gamma \times_\Gamma X_\Gamma))$
is naturally isomorphic to  $K_* (C^*_r \Gamma)$ so that 
there exists a Bott isomorphism
$$\beta:  K_{*+1} (C^*_r ( X_\Gamma \times_\Gamma X_\Gamma)\otimes C_0 (0,1))\to K_* (C^*_r ( X_\Gamma \times_\Gamma X_\Gamma))=K_* (C^*_r \Gamma).$$
Let us now go back to our spin Dirac operator: we have already stated that its principal symbol defines a class $[\sigma_{{\rm pr}} (\slashed{D}_g)]\in 
K^* (T^* X)=K_* (C_r^* (TX))$ with the property that 
\begin{equation}\label{pd-intro}
PD ([\sigma_{{\rm pr}} (\slashed{D}_g)])= [\slashed{D}_g]\in K_* (X)\,.
\end{equation}
Consider the adiabatic index homomorphism, defined as the composition of $\delta_{ad}$ with the Bott isomorphism $\beta$:
\begin{equation}\label{adiab-index-0}
\Ind^{{\rm ad}} := \beta\circ \delta_{ad}:  K_* (C_r^* (TX))\rightarrow K_{\dim X } (C^*_r ( X_\Gamma \times_\Gamma X_\Gamma))\, .
\end{equation}
We define the adiabatic index class of $\slashed{D}^\Gamma_g$ as
\begin{equation}\label{adiab-index}
\Ind^{{\rm ad}} (\slashed{D}^\Gamma_g):= \beta\circ \delta_{ad} [\sigma_{{\rm pr}} (\slashed{D}_g)]\in K_{\dim X } (C^*_r ( X_\Gamma \times_\Gamma X_\Gamma))\, .
\end{equation}
It can be proved that the
\begin{equation}\label{adiab-index-intro}
\Ind^{{\rm ad}} (\slashed{D}^\Gamma_g)=\Ind  (\slashed{D}^\Gamma_g)\in K_{\dim X} (C^*_r \Gamma)
\end{equation}
under the identification $K_* (C^*_r ( X_\Gamma \times_\Gamma X_\Gamma))=K_* (C^*_r \Gamma)$.
In fact, more is true, in that we have a commutative diagram
\begin{equation}\label{com-dia-intro}
\xymatrix{K_* (C^* (TX))\ar[r]^(.45) { \mathrm{Ind}^{ad}  }\ar[d]^{PD}& K_{*} (C_r^* ( X_\Gamma \times_\Gamma X_\Gamma))\ar[d]^{\cong}\\
	K_* (X)\ar[r]^(.5){\mu}&  K_{*} (C^*_r \Gamma)}
\end{equation}
where $\mu$ is the assembly map and 
with the first vertical arrow an isomorphism (then \eqref{adiab-index-intro} follows from \eqref{pd-intro}).
Assume now that $g$ is of positive scalar curvature; then, from  \eqref{adiab-index-intro} or, in fact, directly, we have that
 $\mathrm{Ind}^{ad}  (\slashed{D}^\Gamma_g)=0$ in $K_{\dim X} (C^*_r( X_\Gamma \times_\Gamma X_\Gamma))$ and so we finally see that we can define the (adiabatic) rho class as a certain  natural lift of 
 $[\sigma_{{\rm pr}} (\slashed{D}_g)]\in K_* (C_0 (TX))$
 to $K_* ( C^*_r ( G^{[0,1)}_{ad}))$: 
 \begin{equation}\label{rho-intro}
 \rho^{{\rm ad}} (g)\in K_* (  C^*_r ( G^{[0,1)}_{ad}))\equiv K_* (C^*_r (TX\times\{0\}\cup X_\Gamma \times_\Gamma X_\Gamma\times (0,1) ))
\end{equation}
Thus, in this case,
$$
\xymatrix{ K_* (  C^*_r ( G^{[0,1)}_{ad}))\ar[r]^{\mathrm{ev}_0} & K_*(C^*_r(TX))\ar[r]^(.30){ \Ind^{\mathrm{ad}}} & K_{\dim X} (C^*_r ( X_\Gamma \times_\Gamma X_\Gamma))=K_{*}  (C^*_r (\Gamma))\\
	\rho^{{\rm ad}} (g) & [\sigma_{\mathrm{pr}}(\slashed{D}_g)]\ar@/_1pc/@{.>}[l]\ar[r] & 0 }
$$
\medskip
\noindent
Let us summarize the situation:\\
by considering the adiabatic deformation of the groupoid $X_\Gamma \times_\Gamma X_\Gamma\rightrightarrows X$ we have obtained the {\it adiabatic sequence} 
 \begin{equation*}
\cdots\rightarrow K_* ( C^*_r ( G^{[0,1)}_{ad}) )\xrightarrow{({\rm ev}_0)_*} K_* (C_r^* (TX))\\\xrightarrow{\mathrm{Ind}^{\mathrm{ad}}} K_{*} (C^*_r ( X_\Gamma \times_\Gamma X_\Gamma))
\rightarrow \cdots
\end{equation*}
The last two groups are the receptacle of (alternative but equivalent versions 
of) the fundamental class and of the index class respectively,
viz.  $[\sigma_{{\rm pr}} (\slashed{D}_g)]$ and $\Ind^{{\rm ad}} (\slashed{D}^\Gamma_g)$,
whereas the first group is the receptacle of the rho class $\rho^{{\rm ad}}(g)$ whenever $g$ is of positive scalar curvature.
This means that  we can consider the three K-theory classes in the Higson-Roe descripiton
\begin{equation}\label{delta-fc-index-ter}
 [\slashed{D}_g]\in K_{*} (X)\,,\quad
 \Ind (\slashed{D}^\Gamma_g)\in K_* (C^*_r \Gamma)\,,\quad
\rho(g)\in K_{*+1}  (D^*(X_\Gamma)^\Gamma)
\end{equation}
or, alternatevely, the three  classes
\begin{equation}\label{delta-fc-index-ter}
 [\sigma_{{\rm pr}}(\slashed{D}_g])\in K_{*} (C_0 (T^*X))\,,\;\;
 \Ind^{{\rm ad}} (\slashed{D}^\Gamma_g)\in K_{*} (C^*_r ( X_\Gamma \times_\Gamma X_\Gamma))
\,,\;\;
\rho^{{\rm ad}} (g)\in K_{*}  ( C^*_r ( G^{[0,1)}_{ad}) )
\end{equation}
in the adiabatic deformation picture. It should also be added at this point that a purely groupoid-proof of the delocalized APS index theorem
is possible, giving, again, the well-definedness of \eqref{rho-map} or, better, the well-definedness 
of the analogue
maps
\begin{equation}\label{rho-map-ad}
\rho^{{\rm ad}}:\pi_0 (\mathcal{R}^+ (X))\to K_{\dim X+1}   ( C^*_r ( G^{[0,1)}_{ad}) )\,,\quad 
\rho^{{\rm ad}}:\widetilde{\pi}_0 (\mathcal{R}^+ (X))\to K_{\dim X+1}  (  C^*_r ( G^{[0,1)}_{ad}))
\end{equation}
It is important to notice that the
$C^*$-algebras entering into the adiabatic treatment are much better behaved than their coarse counterparts;
for example $C^*_r (G^{[0,1)}_{ad})$ is a separable $C^*$-algebra whereas $D^* (X_\Gamma)^\Gamma$ is not.
This is important whenever one wants to pair the rho class with suitable cyclic cocycles.

\smallskip
\noindent
We now come to a {\bf fundamental question}:

\medskip
\noindent
{\it did we gain anything by bringing groupoids into the picture ?}

\medskip
\noindent
The answer is {\it yes}, as we shall argue in the next subsection.

\subsection{Why groupoids ?} $\;$\\
The positive answer to our fundamental question 
 is based
on the following very {\bf general principle}:

\medskip
\noindent
 {\it groupoid techniques usually apply to large classes of  groupoids;  they are not specific 
to a particular example}.

\medskip
\noindent
And indeed, in \cite{zenobi-thesis} \cite{Zenobi:Ad}, the above construction  is applied to the adiabatic deformation
of {\it any} Lie groupoid, not necessarily the groupoid $X_\Gamma \times_\Gamma X_\Gamma\rightrightarrows X$.
In particular, a very general delocalized APS index theorem is proved using groupoid techniques.
This gives, for example, the rho class of a metric on a foliation $(X,\mathcal{F})$ which has positive scalar curvature
along the leaves. The whole present article can be seen as another manifestation of this principle: we shall define
the rho class of a stratified space and of a singular foliation (under a positive scalar curvature 
assumption) following the above procedure for suitable groupoids; needless to say, these groupoids 
depend on the specific geometries that  will be considered.\\
One might rightly wonder  why this general principle is true. We single out the following reasons:

\begin{itemize}
\item groupoids have very strong functoriality properties; for example, they can be restricted and  pulled-back;
\item to any Lie groupoid we can associate a $C^*$-algebra; we can use the functoriality properties of groupoids
in order to
obtain natural $C^*$-algebras homomorphisms;
\item  to any Lie groupoid we can associate a psedodifferential algebra; 
\item the combination of these last two items often produce interesting K-theory classes that are defined
in great generality.
\end{itemize}

\subsection{Singular structures}
In this work we shall see these ideas  applied to the following three geometric situations. First, we are given a
pseudomanifold ${}^{\textrm{S}} X$ of depth $k$ with a Thom-Mather stratification. Associated to 
${}^{\textrm{S}} X$ we have its resolution, $X$, a manifold with fibered corners. See \cite{ALMP:Witt}
and also \cite{DLR}. 
For this introduction we fix our attention on Thom-Mather pseudomanifolds of depth 1. This means, in particular,
that we are given  a locally compact metrizable space ${}^{\textrm{S}} X$ such that
\begin{itemize}
\item ${}^{\textrm{S}} X$ is the union of two smooth manifolds ${}^{\textrm{S}} X^{{\rm reg}}$ and $Y$, the two strata;
\item  ${}^{\textrm{S}} X^{{\rm reg}}$ is open and dense in ${}^{\textrm{S}} X$;
\item $Y$, the singular stratum, is a smooth compact manifold;
\item there is an open neighbourhood $T_Y$ of $Y$ in ${}^{\textrm{S}} X$, 
        with a continuous retraction $\pi\colon T_Y\to Y$ and a continuous map $\rho\colon T_Y\to [0,+\infty[$ such that 
        $\rho^{-1}(0)= Y$. 
       \item there exists a smooth compact manifold $L_Y$, called the link associated to $Y$,
       such that $T_Y$ is a fiber bundle over $Y$ with fiber $C(L_Y)$, the cone over $L_Y$.
\end{itemize}
The resolution $X$ associated to   ${}^{\textrm{S}} X$ is defined
as $X:={}^{\textrm{S}} X\setminus \rho^{-1}([0,1))$. This is a manifold with boundary
 $H:=\rho^{-1}(1)$ and  $H$ is the total space of a fibration  $L_Y \to H \xrightarrow{\phi} Y$ with base $Y$ and with 
 typical fiber the link $L_Y$.  
Thus the resolution of a depth-1 Thom-Mather pseudomanifold ${}^{\textrm{S}} X$
is a manifold with fibered boundary $X$; there is a natural identification between the interior of $X$, $\mathring{X}$, and 
${}^{\textrm{S}} X^{{\rm reg}}$.

We shall introduce a metric structure on our singular space by endowing ${}^{\textrm{S}} X^{{\rm reg}}$, or, equivalently
$\mathring{X}$, with a 
riemannian metric $g$. There are many different types of metrics that can be considered; we shall consider 
(rigid) fibered boundary metrics, firstly studied by Mazzeo and Melrose in \cite{MMPHI}. These are metrics that in a tubular neighbourhood of the singularity $Y$, or equivalently, in a collar neighborhood of  the boundary 
of $X$, can be written  in the following special form:
\begin{equation}\label{fb-metric}
\frac{dx^2}{x^4} + \frac{\phi^* g_Y}{x^2} + h_{H/Y}
\end{equation}
with $x$ a boundary defining function for $\pa X$ and $h_{H/Y}$ a fiber metric on the boundary fibration  $L_Y \to H\equiv\pa X \xrightarrow{\phi} Y$. The vector fields dual to the 
metric \eqref{fb-metric} are given by
$$x^2 \pa_x\,,\quad x\pa_{y_1},\dots,x\pa_{y_k}\,,\quad \pa_{z_1},\dots,\pa_{z_\ell}$$
with $(y_1,\dots,y_k)$ coordinates on $Y$ and $(z_1,\dots,z_\ell)$ coordinates along the link $L_Y$; these vector fields
generate locally the Lie algebra of vector fields on $X$
\begin{equation}\label{fibered-corners-vf-intro}
\mathcal{V}_{{\rm \Phi}} (X)=\{\xi\in \mathcal{V}_b (X)\;\;|\;\;\xi |_{\pa X}\;\;\text{is tangent to the fibers of }\;
\phi :\pa X\to Y \text{ and } \xi x\in x^2 C^\infty (X)\}
\end{equation}
where $ \mathcal{V}_b (X)$ is the Lie algebra of vector fields that are tangent to the boundary.
The Lie algebra \eqref{fibered-corners-vf-intro} is finitely generated and projective as a $C^\infty (X)$-module
and thus, according to Serre-Swan, there exists a smooth vector bundle ${}^{{\rm \Phi}}  TX\to X$,
the ${\rm \Phi}$-tangent bundle, whose sections are precisely the vector fields in $ \mathcal{V}_{{\rm \Phi}}  (X)$.
A fibered boundary metric  of product type as in \eqref{fb-metric} extends as a smooth metric to ${}^{{\rm \Phi}} TX\to X$.

Keeping with the {\it general principle} stated in the previous subsection, we shall consider related but more general
singular structures.  As a first generalization of manifolds with fibered boundary we shall consider  manifolds with a {\it foliated}
 boundary, already studied by Rochon \cite{rochon-foliated}; thus $X$ is a manifold
with boundary and there exists a foliation $\mathcal{F}$ on $\pa X$. If $T\mathcal F\subset T(\pa X)$ is the integrable
vector bundle that defines $(\pa X,\mathcal{F})$, then we shall consider
the Lie algebra of vector fields
\begin{equation*}
\mathcal{V}_{\mathcal{F}} (X)=\{\xi\in \mathcal{V}_b (X)\;\;\xi |_{\partial X} \in C^\infty (\partial X,T\mathcal{F})\;\; \text{ and } \xi x\in x^2 C^\infty (X)\}
\end{equation*}
This defines, as before, the $\mathcal{F}$-tangent bundle, ${}^\mathcal{F} TX\to X$; a $\mathcal F$-metric
$g_{\mathcal{F}}$ 
is a metric on $\mathring{X}$ that extends as a smooth metric on ${}^\mathcal{F} TX\to X$. 

There is a further generalization of a depth-1 Thom-Mather space, or, equivalently, of a manifold with fibered boundary;
we consider a foliated manifold $(X,\mathcal{H})$ with non-empty boundary and with
$\mathcal{H}$ transverse to the boundary $\pa X$. We then make the additional assumption that $\partial X$ is foliated by a second foliation
$\mathcal{F}$ such that $\mathcal{F}\subset \mathcal{H}_{| \pa X}$. We refer to this geometric situation as a {\it
foliation degenerating on the boundary} or, quite plainly, as a {\it singular foliation}.\\
The relevant Lie algebra of vector fields in now given by
\begin{equation*}
\mathcal{V}_{\mathcal{F}} (X,\mathcal{H})=\{\xi\in C^\infty(X,T\mathcal{H})\cap \mathcal{V}_b (X),
\;\;\xi |_{\partial X} \in C^\infty (\partial X,T\mathcal{F})\;\; \text{ and } \xi x\in x^2 C^\infty (X)\}
\end{equation*}
This is a finitely generated projective $C^\infty (X)$-module and thus, according to Serre-Swan, there
exists a vector bundle ${}^\mathcal{F} T\mathcal{H}$ whose sections are precisely given by 
$\mathcal{V}_{\mathcal{F}} (X,\mathcal{H})$. An admissible metric $g_{\mathcal{F}}^{\mathcal{H}}$ in this situation is a foliated metric on $(\mathring{X}, \mathcal{G} |_{\mathring{X}})$ that extends to a smooth metric on ${}^\mathcal{F} T\mathcal{H}$. 

Notice that the three Lie algebras of vector fields in the three singular structures
$$\mathcal{V}_{{\rm \Phi}} (X)\,,\quad \mathcal{V}_{\mathcal{F}} (X)\,,\quad \mathcal{V}_{\mathcal{F}} (X,\mathcal{H})$$
give rise to three algebras of differential operators, denoted respectively,
$$\Diff^*_{{\rm \Phi}} (X)\,,\quad \Diff^*_{\mathcal{F}} (X)\,,\quad \Diff^*_{\mathcal{F}} (X,\mathcal{H})\,.$$

\subsection{Main goal of this article}
Consider the three singular structures introduced above and consider in particular the
resulting tangent bundles endowed with the corresponding admissible metrics 
$$({}^{{\rm \Phi}} TX,g_{{\rm \Phi}}) \,,\quad ({}^\mathcal{F} TX, g_{\mathcal{F}})\,,\quad 
({}^\mathcal{F} T\mathcal{H},g_{\mathcal{F}}^{\mathcal{H}}).$$ 
Make a spin assumption on these bundle and fix in each case a spin structure. 
Then we have, correspondingly,
a spin-Dirac operator in each situation, denoted generically as $\slashed{D}$. We have, respectively : 
$$\slashed{D}\in \Diff^*_{{\rm \Phi}} (X)\,,\quad \slashed{D}\in \Diff^*_{\mathcal{F}} (X)\,,\quad \slashed{D}\in \Diff^*_{\mathcal{F}} (X,\mathcal{H})$$
where for simplicity we have omitted the spinor bundle form the notation.

\medskip
\noindent
We ask ourselves the following questions:

\medskip
\noindent
(i) can we define a fundamental class $[\slashed{D}]$ ?\\
(ii) can we define an index class ?\\
(iii) if the metric satisfies a positive scalar curvature assumption, is the index class equal to zero ?\\
(iv) in the latter case, can we define a rho class ?\\
(v) does the rho class descend to the set of path-connected 
components or to the set of concordance classes of the corresponding space of metrics of positive scalar curvature,
as in \eqref{rho-map} ?

\smallskip
\noindent
Needless to say, part of the problem is to understand in which K-theory groups these classes should live.

\medskip
\noindent
{\it The main goal of this article is to provide answers to the above questions.}

\subsection{Results and techniques.}
We first concentrate on stratified pseudomanifolds, for simplicity now  in the depth-1 case.
We let  ${}^{\textrm{S}} X$ be the singular pseudomanifold and $X$ its resolution; we denote by $\Gamma$ the fundamental group of ${}^{\textrm{S}} X$. We fix a 
${\rm \Phi}$-metric $g_{{\rm \Phi}}$ with local form near the boundary equal to
\begin{equation*}
\frac{dx^2}{x^4} + \frac{\phi^* g_Y}{x^2} + h_{\pa X/Y}\,.
\end{equation*}
Assume additionally that the vertical
tangent bundle $T(\pa X/Y)$ is spin and fix a spin structure for  $(T(\pa X/Y),h_{\pa X/Y})$.
Building on the ${\rm \Phi}$-calculus of Mazzeo and Melrose \cite{MMPHI} and on its generalization to pseudomanifolds 
of arbitrary depth by Debord, Lescure and Rochon \cite{DLR}, we show that if the metric $h_{\pa X/Y}$
has positive scalar curvature along the fibers, then $\slashed{D}$ and $\slashed{D}_\Gamma$ are {\it fully elliptic} and  there is 
therefore a well defined fundamental class
$[\slashed{D}]\in K_* ( {}^{\textrm{S}} X)$ and an index class $\Ind(\slashed{D}_\Gamma)\in K_* (C^*_r \Gamma)$.
Moreover, if $g_{{\rm \Phi}}$ is of positive scalar curvature, then $\Ind(\slashed{D}_\Gamma)=0$ in $ K_* (C^*_r \Gamma)$.
We then bring groupoids into the picture and show that these two classes can also be described through
a suitable  adiabatic groupoid (this groupoid already appears in \cite{DLR}). For the fundamental class this 
equivalent description of $[\slashed{D}]\in K_* ( {}^{\textrm{S}} X)$ was already known; see  \cite{DLR}.
For the index class
we give a very detailed proof of this compatibility, building on ideas  of Monthubert-Pierrot, Debord and Skandalis, see \cite{MP,DSk}.
Having given an adiabatic groupoid description of  $[\slashed{D}]\in K_* ( {}^{\textrm{S}} X)$ and  of $\Ind(\slashed{D}_\Gamma)\in K_* (C^*_r \Gamma)$ we now {\bf define} the rho class of a positive scalar curvature metric $g_{{\rm \Phi}}$
as the adiabatic rho class. Using the delocalized APS index theorem for general Lie groupoids, due to Zenobi,
we then prove that this rho class enjoys the usual stability properties on the space
of positive scalar curvature ${\rm \Phi}$-metrics; for example, we establish the analogue of  \eqref{rho-map-ad}.
The advantage of this approach through groupoids is that it can be extended with a relatively small effort to the other two more
singular situations; this is of course in accordance with the general principle put forward in this introduction
and it is in fact one of the main reasons to give a groupoid treatment of the three K-theoretic invariants in the stratified setting.
In generalizing to the foliated case we shall take advantage of the  results  contained in the recent paper of Debord and Skandalis \cite{DSk}; it is also because of their results that we are  able to give a unified treatment of the  three 
singular structures.
Notice that in the foliated  case all the three classes are defined {\it directly} in the adiabatic context. More information
about the content of this paper can be gathered from the description of the single sections given  below.

%

\bigskip\bigskip\bigskip
\noindent
{\bf The paper is organized as follows.} In Section \ref{sect:sss} we recall the basics about stratified pseudomanifolds
and their resolutions into manifolds with corners with an iterated fibration structure (briefly,  manifolds with fibered corners).
Section \ref{sect:vf&m} is devoted to ${\rm \Phi}$-geometry; thus we introduce the
${\rm \Phi}$-vector fields on a manifold with fibered corners, the associated  ${\rm \Phi}$-tangent bundle and  the metrics that we shall consider,
the ${\rm \Phi}$-metrics. Section \ref{sect:crash} is a brief introduction to groupoids and algebroids. In Section \ref{phi-grpd} we present the 
groupoid associated to a stratified pseudomanifold. In Section \ref{sect:pseudo} we introduce the ${\rm \Phi}$-pseudodifferential algebra,
both using  classical microlocal techniques  and  as the pseudodifferential algebra associated to the groupoid  defined by our stratified pseudomanifold; it is in this section that fully elliptic ${\rm \Phi}$-pseudodifferential operators
are introduced. In Section \ref{sect:k-classic} we employ ${\rm \Phi}$-pseudodifferential operators in order to define the fundamental class
and the index class of a $\Gamma$-equivariant fully elliptic ${\rm \Phi}$-pseudodifferential operator on a stratified pseudomanifold 
 ${}^{\textrm{S}} X_\Gamma$ endowed
with a free stratified cocompact $\Gamma$-action \footnote{for example, the universal cover of a stratified pseudomanifold}; these are elements
in $K^\Gamma_* ({}^{\textrm{S}} X_\Gamma)$ and $K_* (C^*_r \Gamma)$ respectively. In Section \ref{sect:k-adiabatic} we introduce the adiabatic
deformation of our groupoid and define the non-commutative symbol and the adiabatic index class associated to   a $\Gamma$-equivariant fully elliptic ${\rm \Phi}$-pseudodifferential operato. We also introduce the rho-class of an invertible operator and study its fundamental properties.
In Section \ref{sect:compatibility-facts}
we state the main theorems relating the fundamental class and the index class defined through classic microlocal methods with
the non-commutative symbol and the adiabatic index class defined through the adiabatic deformation of our groupoid.
Section \ref{sect:compatibility-proofs} is devoted to a detailed proof of the equality between the classical index class and the adiabatic index class
in our stratified context. In Section \ref{sect:dirac-fully} and Section \ref{sect:dirac-k} we treat carefully the case of spin Dirac operators
and its connections with the world of positive scalar curvature metrics. Finally, in Section \ref{sing-fol} we extend all of our results
to a class of singular foliations.

\bigskip
\noindent
{\bf Acknowledgements.} We thank Pierre Albin, Claire Debord, Jean-Marie Lescure, Fr\'ed\'eric Rochon and Georges Skandalis for many illuminating
and useful discussions. Part of this work was done while the first author was visiting {\it Universit\'e de Montpellier}, {\it Universit\'e Paris Diderot (Paris 7)}
and {\it G\"ottingen University} and while the second author was visiting {\it Sapienza Universit\`a di Roma}.
We thank these institutions and the ANR Project {\it SingStar} (Analysis on Singular and Noncompact Spaces: a $C^*$-algebra approach) for the financial support that made these visits possible. 
 \bigskip


\section{Smoothly stratified spaces and their resolutions}\label{sect:sss}

\subsection{Smoothly stratified spaces}
We start by recalling the definition of a
smoothly stratified pseudomanifold ${}^{\textrm{S}} X$  with Thom-Mather control data (briefly, a 
smoothly stratified space). The definition is given inductively. 

\begin{definition}
A smoothly stratified space of depth $0$ is a closed manifold. Let $k \in \N$, assume that 
the concept of smoothly stratified space of depth $\leq k$ has been defined.
A smoothly stratified space ${}^{\textsf{S}} X$ of depth $k+1$ is a  locally compact, second countable Hausdorff space which admits a locally finite 
decomposition into a union of locally closed {\rm strata} $\frakS = \{S^j\}$, 
where each $S^j$ is a smooth (usually open) manifold, with dimension depending on the index $j$. 
We assume the following:

\begin{itemize}
\item[i)] If $S^i, S^j \in \frakS$ and $S^i \cap \overline{S^j} \neq \emptyset$,
then $S^i \subset \overline{S^j}$. 
\item[ii)] Each stratum $S$ is endowed with a set of `control data' $T_S$, $\pi_S$ and $\rho_S$;
here $T_S$ is a neighbourhood of $S$ in ${}^{\textsf{S}} X$ which retracts onto $Y$, $\pi_S: T_S \longrightarrow S$ is 
a fixed continuous retraction and $\rho_S: T_S \to [0,2)$ is a  `radial function'  on the 
tubular neighbourhood such that $\rho_S^{-1}(0) = S$. Furthermore, we require that if $Z \in \frakS$ and 
$Z \cap T_S \neq \emptyset$, then 
\[
(\pi_S,\rho_S): T_S\cap Z \longrightarrow S \times [0,2)
\]
is a proper differentiable submersion. 
\item[iii)] If $W,Y,Z \in \frakS$, and if $p \in T_Y \cap T_Z \cap W$ and $\pi_Z(p) \in T_Y \cap Z$,
then $\pi_Y(\pi_Z(p)) = \pi_Y(p)$ and $\rho_Y(\pi_Z(p)) = \rho_Y(p)$.
\item[iv)] If $Y,Z \in \frakS$, then
\begin{eqnarray*}
Y \cap \ovl{Z} \neq \emptyset & \Leftrightarrow & T_Y \cap Z \neq \emptyset, \\
T_Y \cap T_Z \neq \emptyset & \Leftrightarrow & Y \subset \ovl{Z}, \ Y=Z\ \  \mbox{or}\ Z \subset \ovl{Y}.
\end{eqnarray*}
\item[v)] There exist a family of smoothly stratified spaces (with Thom-Mather control data) of depth less than or
equal to $k$, indexed by $\frakS$, $\{L_Y, Y\in \frakS\}$, with the property that the restriction $\pi_Y: T_Y \to Y$ is a 
locally trivial fibration with 
fibre the cone $C(L_Y)$ over  $L_Y$ (called the link over $Y$), with atlas $\calU_Y = 
\{(\phi,\calU)\}$ where each $\phi$ is a trivialization $\pi_Y^{-1}(\calU) \to \calU \times C(L_Y)$, and the 
transition functions are stratified isomorphisms  of $C(L_Y)$ which preserve the rays of 
each conic fibre as well as the radial variable $\rho_Y$ itself, hence are suspensions of isomorphisms of 
each link $L_Y$ which vary smoothly with the variable $y \in \calU$. 
\end{itemize}

If in addition we let ${}^{\textsf{S}}X_j$ be the union of all strata of dimensions less than or equal to $j$, and
require that 
\begin{itemize}
\item[vi)] ${}^{\textsf{S}}X =  {}^{\textsf{S}}X_n\supseteq {}^{\textsf{S}}X_{n-1} = {}^{\textsf{S}}X_{n-2} \supseteq {}^{\textsf{S}}X_{n-3} \supseteq \ldots \supseteq {}^{\textsf{S}}X_0$ and
${}^{\textsf{S}}X \setminus {}^{\textsf{S}}X_{n-2}$ is dense in ${}^{\textsf{S}}X$
\end{itemize}
then we say that $\hat X$ is a stratified pseudomanifold. 
\end{definition}
The depth of a stratum $S$ is the largest integer $k$ such that there is a chain of strata
$S= S^k, \ldots, S^0$ with $S^{j-1} \subset \overline{S^{j}}$ for $1 \leq j \leq k$. A stratum of minimal
depth is always a closed manifold. The maximal depth of any stratum in ${}^{\textsf{S}}X$ is called the depth of ${}^{\textsf{S}}X$ as 
a stratified space. (We follow here the convention
of \cite{DLR}, given that we shall use heavily this paper.)

We refer to the dense open stratum of a stratified pseudomanifold ${}^{\textsf{S}}X$ as its regular set,
and the union of all other strata as the singular set,
\[
\mathrm{reg}({}^{\textsf{S}}X) := {}^{\textsf{S}}X\setminus \mathrm{sing}({}^{\textsf{S}}X), \qquad \mathrm{where}\qquad
\mathrm{sing}({}^{\textsf{S}}X) = \bigcup_{{S\in \mathfrak S}\atop{\mathrm{depth}\,S > 0}} S.
\]
In this paper, we shall often for brevity refer to a smoothly stratified pseudomanifold with Thom-Mather
control data as a {\em smoothly stratified space}.

\subsection{Iterated fibration structures}
Let $X$ be a manifold with corners. We assume that each boundary hypersurface $H\subset X$ is
embedded in $X$. This means that there exists a boundary defining function
$x_H\in C^\infty(X)$ such that $x^{-1}_H (0) = H$, $x_H$ is positive on $X \setminus H$ and the
differential $dx_H$ is nowhere zero on $H$. In such a situation, one can choose
a smooth retraction $r_H \colon \mathcal{N}_H \to H$ , where $\mathcal{N}_H$ is a (tubular) neighborhood
of $H$ in $X$ such that $(r_H, x_H) \colon \mathcal{N}_H \to H \times [0,1)$ is a diffeomorphism
on its image. We call $(\mathcal{N}_H, r_H, x_H)$ a tube system for $H$. A smooth map
$f\colon X\to Y$
between manifolds with corners is said to be a fibration if it
is a locally trivial surjective submersion.

\begin{definition} Let $X$ be a compact manifold with corners and
$H_1, . . . ,H_k$ an exhaustive list of its set of boundary hypersurfaces $M_1X$.
Suppose that each boundary hypersurface $H_i$ is the total space of a smooth
fibration $\phi_i \colon H_i\to S_i$ where the base $S_i$ is also a compact manifold with corners. The collection of fibrations
 $\phi=(\phi_1,\dots,\phi_k)$ 
 is said to be an iterated
fibration structure if there is a partial order on the set of hypersurfaces such
that
\begin{enumerate}
\item for any subset $I\subset \{1, \dots, k\}$ with 
$\bigcap_{i\in I}
H_i \neq\emptyset$, the set $\{H_i | i \in I\}$
is totally ordered.
\item If $H_i < H_j$ , then $H_i \cap H_j  \neq\emptyset$, $\phi_i \colon H_i \cap H_j \to S_i$ is a surjective
submersion and $S_{ji}:=\phi_j (H_i\cap H_j) \subset S_j$ is a boundary hypersurface
of the manifold with corners $S_j$ . Moreover, there is a surjective
submersion $\phi_{ji}\colon S_{ji} \to S_i$ such that on $H_i \cap H_j$ we have
$\phi_{ji}\circ \phi_j=\phi_i$.
\item The boundary hypersurfaces of $S_j$ are exactly the $S_ji$ with $H_i < H_j$ .
In particular if $H_i$ is minimal, then $S_i$ is a closed manifold.
\end{enumerate}
\end{definition}

We shall refer to a manifold with corners endowed with an iterated fibration structure as
a {\em manifold with fibered corners}. For more on the notion of manifold with fibered corners, a notion due
to Richard Melrose,  the reader is referred to 
\cite{ALMP:Witt}, \cite{DLR}.

\subsection{The resolution of a stratified space}
If ${}^{\textrm{S}} X$  is a smoothly stratified pseudomanifold then, as explained in detail in \cite{ALMP:Witt},
it is possible to resolve ${}^{\textrm{S}} X$ into a manifold with fibered corners $X$. More precisely, there exists
a manifold with fibered corners $X$ and a continuous surjective map $\beta: X\to {}^{\textrm{S}} X$ which restrict
to a diffeomorphism from the interior of $X$ onto the regular part of ${}^{\textrm{S}} X$. The construction of $X$ is iterative,
by means of radial blow-ups.
If ${}^{\textrm{S}} X$ is normal (i.e. the links are connected), then the boundary hypersurfaces
of $X$ correspond bijectively to the strata of ${}^{\textrm{S}} X$: to each stratum $Y$  there is an associated
boundary hypersurface $H_Y$ which is a fibration with base the resolution of the closure of $Y$, a stratified
space itself, and fibers the resolution of the links of $Y$. If the links are not connected then each stratum
contributes to a {\em collective boundary hypersurface}, see \cite{ALMP:Hodge} and references therein. 
For simplicity we bound ourselves to the normal case; we thus have a bijection between the strata of
${}^{\textrm{S}} X$ and the boundary hypersurfaces of $X$.\\ Notice that the process  of replacing ${}^{\textrm{S}} X$ by $X$ is already 
present in Thom's seminal work \cite{Thom:Ensembles}, and versions of it have 
appeared in Verona's `resolution of singularities' \cite{Verona} and the `d\'eplissage' of Brasselet-Hector-Saralegi \cite{BHS}. These 
constructions show that any smoothly stratified space can be resolved to a smooth manifold, 
possibly with corners; the additional information given in \cite{ALMP:Witt} is  the existence of an 
iterated boundary
fibration structure on the resolved manifold with corners $X$.

\subsection{Galois coverings.} Let $\Gamma$ be a finitely generated discrete group.
Assume now that $\Gamma - {}^{\textrm{S}} X_\Gamma \to {}^{\textrm{S}} X$ is a Galois $\Gamma$-covering of a smoothly stratified space
${}^{\textrm{S}} X$. Since  ${}^{\textrm{S}} X_\Gamma$ and  ${}^{\textrm{S}} X$ are locally homeomorphic we can endow ${}^{\textrm{S}} X_\Gamma$ with  the  structure of a smoothly stratified space in such a way that the projection map  ${}^{\textrm{S}} X_\Gamma\to  {}^{\textrm{S}} X $ be  a stratified map. Notice that the strata of  ${}^{\textrm{S}} X_\Gamma$ are the lifts of the strata of ${}^{\textrm{S}} X$ and that the link of a stratum
$Y\subset {}^{\textrm{S}} X$ is the same as the link of the lifted stratum, $Y_\Gamma$, in ${}^{\textrm{S}} X_\Gamma$.
Let $X_\Gamma$ be the resolution of ${}^{\textrm{S}} X_\Gamma$; following the inductive procedure that defines the resolution
of a stratified space it is not difficult to see that it is possible to lift the action of $\Gamma$ from 
${}^{\textrm{S}} X_\Gamma$ to $X_\Gamma$. Moreover, with this induced action, $X_\Gamma$ is a Galois covering
of $X$ and the interior of $X_\Gamma$, which is the regular part of   ${}^{\textrm{S}} X_\Gamma$, is a Galois $\Gamma$-cover of
the interior of $X$, which is in turn the regular part of ${}^{\textrm{S}} X$. In fact, the action of $\Gamma$ respects the boundary fibrations structures of $X_\Gamma$
and $X$: if $H\xrightarrow{\phi} S$ is a boundary hypersurface of $X$ corresponding to a  stratum $Y$
of  ${}^{\textrm{S}} X$ and if $p$ denotes the quotient map induced by the action, then there is a commutative diagram 

\begin{equation}\label{compatible-bfs}
\xymatrix{H_{\Gamma}\ar[r]^(.5){p}\ar[d]^{\phi_\Gamma}&H\ar[d]^{\phi}\\
	S_\Gamma\ar[r]^(.5){p}& S }
\end{equation}
where $H_\Gamma\xrightarrow{p} H$ induces a diffeomorphism on the fibers of the two fibrations,
$H_{\Gamma}\xrightarrow{\phi_\Gamma} S_\Gamma$ and $H\xrightarrow{\phi} S$, 
and where $S_\Gamma$ is the resolution of the closure of $Y_\Gamma$.

\section{Vector fields and metrics}\label{sect:vf&m} 

\subsection{Fibered-corners vector fields}

Let $X$ be a manifold with fibered corners as above, 
with fibered hypersurfaces $\phi_1\colon H_1\to S_1, \dots, \phi_k\colon H_k\to S_k$. 
We assume that $H_1,\dots,H_k$ is an exhaustive list of the boundary hypersurfaces of $X$.
For each $i$, let $x_i$ be a boundary defining function of the hypersurface $H_i$. 
The $b$-vector fields are the vector fields on $X$ that are tangent to the boundary:
\begin{equation}\label{b-vf}
\mathcal{V}_b (X) =\{\xi \in C^\infty(X,TX)\,\;\; ; \;\;\xi x_i\in x_i C^\infty (X) \,\forall i\}
\end{equation}
They form a Lie subalgebra of the Lie algebra of all vector fields on $X$.\\
We introduce the
space  of fibered-corners vector  fields, or ${\rm \Phi}$-vector fields
as
\begin{equation}\label{fibered-corners-vf}
\mathcal{V}_{{\rm \Phi}} (X)=\{\xi\in \mathcal{V}_b (X)\;\;\xi |_{H_i}\,,\;\;\text{is tangent to the fibers of }\;
\phi_i:H_i\to S_i \text{ and } \xi x_i\in x_i^2 C^\infty (X) \,\;\forall i\}
\end{equation}
and we point out that they also form a Lie subalgebra.

\subsection{The vector bundle  ${}^{{\rm \Phi}} TX$ associated to $\mathcal{V}_{{\rm \Phi}} (X)$}
The algebra $\mathcal{V}_{{\rm \Phi}} (X)$ is a finitely generated projective $C^\infty (X)$-module
and thus, according to the  Serre-Swan theorem,  there exists a smooth vector bundle   ${}^{{\rm \Phi}} TX$
on $X$ and a natural map $i_{{\rm \Phi}}: {}^{{\rm \Phi}} TX\to TX$ with the property that
$$i_{{\rm \Phi}} (C^\infty (X;{}^{{\rm \Phi}} TX))=\mathcal{V}_{{\rm \Phi}}(X)$$
where, with a small abuse of notation, we also use $i_{{\rm \Phi}}$ for the resulting map  on sections.
The following properties are crucial:

\begin{itemize}
\item $C^\infty (X;{}^{{\rm \Phi}} TX)$ has a Lie algebra structure;
\item the map $i_{{\rm \Phi}}$ on sections satisfies $i_{{\rm \Phi}} [X,Y]=[i_{{\rm \Phi}} X, i_{{\rm \Phi}} Y]$ for all $X,Y\in C^\infty (X;{}^{{\rm \Phi}} TX)$;
\item if $f\in C^\infty (X)$ then $[X,fY]=f[X,Y]+ (i_{{\rm \Phi}} (X)f) Y$ for all $X,Y\in C^\infty (X;{}^{{\rm \Phi}} TX)$\,.
\end{itemize}
As we shall see in a moment, it is possible to encode all of the above by saying that  ${}^{{\rm \Phi}} TX$ is a Lie algebroid over $X$
with anchor map $i_{{\rm \Phi}}: {}^{{\rm \Phi}} TX\to TX$.

\subsection{Metrics.}
Consider a smoothly stratified space ${}^{\textrm{S}} X$ and its regular part $\mathrm{reg}({}^{\textsf{S}}X) $.
Since $\mathrm{reg}({}^{\textsf{S}}X) $ is diffeomorphic to the interior of $X$, $\mathring{X}$, we directly
use the symbol $\mathring{X}$ for this smooth non-compact manifold.
We can endow $\mathring{X}$ with a variety of riemannian metrics.

 In \cite{ALMP:Witt} \cite{ALMP:Hodge}
the main focus was on iterated {\em incomplete edge} metrics, which are now called
 {\em iterated wedge metrics}. Iterated  {\em complete} edge metrics, simply called now {\it iterated edge metrics} can also
be considered: given an iterated wedge metric $g_{{\rm w}}$ we can define an iterated edge metric by setting
$g_{{\rm e}}:= \rho^{-2} g_{{\rm w}}$
where $\rho$ is the product of all the boundary defining functions $x_i$, $i\in\{1,\dots,k\}$.

As explained for example in \cite{ALMP:Witt},
an iterated edge metric $g_{{\rm e}}$ extends as a smooth metric on the edge tangent bundle ${}^{\rm e} TX\to X$ which is, by definition, the vector bundle obtained by applying the Serre-Swan theorem 
to the Lie algebra of edge vector fields
\begin{equation}\label{edge-vf}
\mathcal{V}_{{\rm e}} (X)=\{\xi\in \mathcal{V}_b (X)\;\;\xi |_{H_i}\,,\;\;\text{is tangent to the fibers of }\;
\phi_i:H_i\to S_i  \,\;\forall i\}
\end{equation}
(this is indeed a finitely generated projective $C^\infty (X)$-module).

\medskip
In this article we shall be interested in  {\em fibered corner metrics} \footnote{also called ${{\rm \Phi}}$-metrics in the literature}
and these are defined analogously but  in terms
of $\mathcal{V}_{{\rm \Phi}} (X)$ and the associated vector bundle ${}^{{\rm \Phi}}TX$:

\begin{definition}\label{def:metric}
A riemannian  metric on $\mathring{X}$ is
a fibered corner  metric
if it  extends as a 
 smooth bundle metric to ${}^{{\rm \Phi}} TX\to X$.\\
 We shall not distinguish between the metric on $\mathring{X}$ and its extension to ${}^{{\rm \Phi}} TX$.
 \end{definition}
 
 Notice that a  fibered corner metric $g_{{\rm \Phi}}$ is complete. $\mathring{X}$, endowed with a fibered corner metric 
 $g_{{\rm \Phi}}$, is an example of a manifold with a Lie structure at infinity \cite{ALN}; $\mathring{X}$ endowed with  an  iterated edge
 metric  $g_{{\rm e}}$  is another example of  a manifold with a Lie structure at infinity.
 
Given a fibered corner metric $g_{{\rm \Phi}}$ we can also consider the metric $g_{{\rm fcusp}}:= \rho^{-2} g_{{\rm \Phi}}$,
which is, by definition, an {\rm iterated fibered cusp metric}. This is also a complete metric.

Notice that iterated fibered cusp metrics and iterated wedge metrics are also smooth metrics on suitable 
vector bundles over $X$; however, the corresponding vector fields, i.e., the sections of these vector bundles,
are not closed under Lie bracket.
\bigskip

 \noindent
 {\bf Example. (Rigid metrics on depth-1 spaces.)} \\
{\it  If, for example, ${}^{\textrm{S}} X$ is a depth-1 stratified space, so that $X$ is a manifold with fibered boundary
 $\partial X=:H$, $Z - H\xrightarrow{\phi} S$,
 then an example of  fibered corner metric (in this case,  a fibered {\em boundary} metric) is a metric that on $\mathring{X}$,
  that near $\partial X\equiv H\equiv\{x=0\}$, can be written as
\begin{equation}\label{fibered-bdry-metric}
g_{{\rm \Phi}}=\frac{dx^2}{x^4} + \frac{\phi^* g_S}{x^2} + g_{H/S}
\end{equation}
with $g_S$ a metric on $S$, the singular locus of ${}^{\textrm{S}} X$, and $g_{H/S}$ a metric on the vertical tangent bundle of $H$.
A fibered boundary metric with this product structure near the boundary is called {\bf rigid}. Wedge metrics, edge metrics and fibered cusp metrics with the additional property of being rigid
would be respectively defined by the following forms near $\pa X$:
\begin{equation*}
g_{\rm w}=dx^2 +\phi^* g_S+ x^2 g_{H/S}\,,\;\;\;\; g_{\rm e}=\frac{dx^2}{x^2} + \frac{\phi^* g_S}{x^2} + g_{H/S}
\,,\;\;\; \;g_{\rm fcusp}=\frac{dx^2}{x^2} + \phi^* g_S + x^2 g_{H/S}.
\end{equation*}
}
 
 \medskip
 A general rigid fibered corner metric has an iterative description similar to the one in \eqref{fibered-bdry-metric}:
 in a collar neighbourhood of $H_i$ it can be written as
 \begin{equation}\label{fibered-corner-metric}
g_{{\rm \Phi}}=\frac{dx^2_i}{x_i^4} + \frac{\phi_i^* g_{S_i}}{x_i^2} + g_{H_i /S_i}
\end{equation}
with $g_{S_i}$ a fibered corner metric on $S_i$ and $g_{H_i /S_i}$  a family of fibered corners metrics on the fibers of 
$H_i\xrightarrow{\phi_i} S_i$. 

\bigskip
\noindent
{\bf In this article we shall work exclusively with rigid fibered corner metrics.}

 \subsection{Densities}
The ${{\rm \Phi}}$-density bundle ${}^{{\rm \Phi}}\Omega$ is the bundle on $X$ with fiber at $p$ equal to
$\{u:\Lambda^{\dim X} ({}^{{\rm \Phi}} T_pX)\to \bbR\;;\; u(t\omega)=|t|u(\omega) \; \forall \omega \in \Lambda^{\dim X} ({}^{{\rm \Phi}} T_pX)\,,\;
\forall t\not=0\}$. The volume form associated to a fibered corner metric is a section of the $\Phi$-density bundle.

\section{A crash course on  Lie groupoids and  Lie algebroids}\label{sect:crash}
We refer the reader to the survey  \cite{DL} and the bibliography therein  for a detailed overview about groupoids and 
their role in index theory. Still, we shall now give the fundamental notions that are necessary in order to understand the content of this article.
 
\subsection{Basics}
\begin{definition} Let $G$ and $G^{(0)}$ be two sets.  A groupoid structure on $G$ over $G^{(0)}$ is given by the following morphisms:
	\begin{itemize}
	
		\item Two maps: $r,s: G\rightarrow G^{(0)}$,
		which are respectively the range and  source map.
			\item A map $u:G^{(0)}\rightarrow G$ called the unit map that is a section for both $s$ and $r$. We can identify $G^{(0)}$ with its image
			in $G$. 
		\item An involution: $ i: G\rightarrow G
		$, $  \gamma  \mapsto \gamma^{-1} $ called the inverse
		map. It satisfies: $s\circ i=r$.
		\item A map $ m: G^{(2)}  \rightarrow  G
		$, $ (\gamma_1,\gamma_2)  \mapsto  \gamma_1\cdot \gamma_2 $
		called the product, where the set 
		$$G^{(2)}:=\{(\gamma_1,\gamma_2)\in G\times G \ \vert \
		s(\gamma_1)=r(\gamma_2)\}$$ is the set of composable pair. Moreover for $(\gamma_1,\gamma_2)\in
		G^{(2)}$ we have $r(\gamma_1\cdot \gamma_2)=r(\gamma_1)$ and $s(\gamma_1\cdot \gamma_2)=s(\gamma_2)$.
	\end{itemize}
	
	The following properties must be fulfilled:
	\begin{itemize}
		\item The product is associative: for any $\gamma_1,\
		\gamma_2,\ \gamma_3$ in $G$ such that $s(\gamma_1)=r(\gamma_2)$ and
		$s(\gamma_2)=r(\gamma_3)$ the following equality
		holds $$(\gamma_1\cdot \gamma_2)\cdot \gamma_3= \gamma_1\cdot
		(\gamma_2\cdot \gamma_3)\ .$$
		\item For any $\gamma$ in $G$: $r(\gamma)\cdot
		\gamma=\gamma\cdot s(\gamma)=\gamma$ and $\gamma\cdot
		\gamma^{-1}=r(\gamma)$.
	\end{itemize}
	
	We denote a groupoid structure on $G$ over $G^{(0)}$ by
	$G\rightrightarrows G^{(0)}$,  where the arrows stand for the source
	and target maps. 
\end{definition}

We will adopt the following notations: $$G_A:=
s^{-1}(A)\ ,\ G^B:=r^{-1}(B)\, \ G_A^B:=G_A\cap G^B \,\ \mbox{ and } G_{| A}:=G^A_A$$
in particular if $x\in G^{(0)}$, the  $s$-fiber (resp. 
$r$-fiber) of $G$ over $x$ is $G_x=s^{-1}(x)$ (resp. $G^x=r^{-1}(x)$).

\begin{definition}
	
	We call $G$ a Lie groupoid when $G$ and $G^{(0)}$ are second-countable smooth manifolds
	with $G^{(0)}$ Hausdorff, and the structural homomorphisms are smooth.
\end{definition}

Let us see some examples. In the following ones $X$ is a smooth manifold, $\phi\colon X\to B$ is a smooth fibration, $p\colon X_\Gamma\to X$ is a Galois $\Gamma$-covering, $E\to X$ is a vector bundle:\\

\begin{tabular}{|l|l|l|l|l|}
 	\hline
 	$G\rightrightarrows G^{(0)}$ & $r$ & $s$ & $i$ & $m$\\
 	\hline
 $X\times X\rightrightarrows X$ & $(x,y)\mapsto x$ & $(x,y)\mapsto y$& $(x,y)\mapsto (y,x)$ & $(x,y)\cdot(y,z)= (x,z)$\\
 	\hline
  $X\times_B X\rightrightarrows X$ & $(x,y)\mapsto x$ & $(x,y)\mapsto y$& $(x,y)\mapsto (y,x)$ & $(x,y)\cdot(y,z)= (x,z)$ \\
 	\hline
  $X_\Gamma\times_\Gamma X_\Gamma\rightrightarrows X$ & $(\tilde{x},\tilde{y})\mapsto p(\tilde{x})$ & $(\tilde{x},\tilde{y})\mapsto p(\tilde{y})$& $(\tilde{x},\tilde{y})\mapsto (\tilde{y},\tilde{x})$ & $(\tilde{x},\tilde{y})\cdot(g\tilde{y},\tilde{z})=(\tilde{x},g^{-1}\tilde{z})$ \\
 	\hline
 	$E\rightrightarrows X$ & $(x,\xi)\mapsto x$ & $(x,\xi)\mapsto x$ & $(x,\xi)\mapsto (x,-\xi)$ & $(x,\xi)\cdot(x,\eta)=(x,\xi+\eta)$\\
 	\hline
 \end{tabular}

\subsection{Groupoid C*-algebras}
We can associate to a Lie groupoid $G$ the *-algebra $$C^\infty_c(G,\Omega^{\frac{1}{2}}(\ker ds\oplus\ker dr))$$ of the compactly supported sections of the half densities bundle associated to $\ker ds\oplus\ker dr$, with:

\begin{itemize}
	\item the involution given by $f^*(\gamma)=\overline{f(\gamma^{-1})}$;
	\item and the convolution product given by $f*g(\gamma)=\int_{G_{s(\gamma)}} f(\gamma\eta^{-1})g(\eta)$.
\end{itemize}  

For all $x\in G^{(0)}$ the algebra $C^\infty_c(G,\Omega^{\frac{1}{2}}(\ker ds\oplus\ker dr))$ can be represented on 
$L^2(G_x,\Omega^{\frac{1}{2}}(G_x))$ by 
\[\lambda_x(f)\xi(\gamma)=\int_{G_{x}} f(\gamma\eta^{-1})\xi(\eta), \]
where $f\in C^\infty_c(G,\Omega^{\frac{1}{2}}(\ker ds\oplus\ker dr))$ and $\xi\in L^2(G_x,\Omega^{\frac{1}{2}}(G_x))$.

\begin{definition}
	The reduced C*-algebra of a Lie groupoid G, denoted by $C^*_r(G)$, is the completion of $C^\infty_c(G,\Omega^{\frac{1}{2}}(\ker ds\oplus\ker dr))$ with respect to the norm
	\[
	||f||_r=\sup_{x\in G^{(0)}}||\lambda_x(f)||_{x}
	\]
	where $||\cdot||_{x}$is the operator norm on $L^2(G_x,\Omega^{\frac{1}{2}}(G_x))$.
	
	The full C*-algebra of $G$ is the completion of 
	$C^\infty_c(G,\Omega^{\frac{1}{2}}(\ker ds\oplus\ker dr))$ with respect to all continuous representations.
\end{definition}

\smallskip
\noindent
{\bf Notation}; We shall often omit from the notation the half-densities bundle and consider it as understood.

\begin{example}\label{examples-c*}
We give a few examples:\begin{itemize} \item$C^*_r(X\times X)\cong C^*(X\times X)\cong \mathbb{K}(L^2(X))$; 
	\item $C^*_r(X\times_B X)\cong C^*(X\times_B X)$ is a field over $B$ of compact operators C*-algebras;
	\item $C^*_r(X_\Gamma\times_\Gamma X_\Gamma)$ is Morita equivalent 
\footnote{in particular they have the same K-theory groups} to $C^*_r(\Gamma)$;
\item $C^*_r(E)\cong C^*(E)\cong C_0(E^*)$. 
\end{itemize}
\end{example}

\begin{remark}\label{fullvsred}
From now on, if $X$ is a $G$-invariant closed subset of $G^{(0)}$ (this is also called a {\it saturated} closed subset of 
 $G^{(0)}$)
we will call
$e_X\colon C^\infty_c(G)\to C^\infty_c(G_{|X})$ the restriction map to $X$.
This gives an exact sequence of full groupoid C*-algebras
\begin{equation}\label{short-saturated}
\xymatrix{0\ar[r]& C^*(G_{|G^{(0)}\setminus X})\ar[r]&C^*(G)\ar[r]& C^*(G_{|X})\ar[r]&0},
\end{equation}
see \cite{renault}.
Notice that in general this is not true for the reduced $C^*$-algebras:  
the reader can find examples of this phenomenon in \cite{HLS}. However, in some examples that we shall consider (not all) the
short exact sequence \eqref{short-saturated} will also hold for the reduced $C^*$-algebras. For instance this is the case when the groupoid $G_{|X}$ is amenable.\\

\end{remark}

\subsection{Lie algebroids}

\begin{definition} A  Lie
	algebroid $\mathfrak{A} =(p:\mathfrak{A}\rightarrow TM,[\ ,\ ]_{\mathfrak{A}})$ on a smooth
	manifold $M$ is a vector bundle $\mathfrak{A} \rightarrow M$
	equipped with a bracket $[\ ,\ ]_{\mathfrak{A}}:C^\infty (M;\mathfrak{A})\times C^\infty (M;\mathfrak{A})
	\rightarrow C^\infty(M,\mathfrak{A})$ on the module of sections of $\mathfrak{A}$, together
	with a homomorphism of fiber bundle $p:\mathfrak{A} \rightarrow TM$ from $\mathfrak{A}$ to the
	tangent bundle $TM$ of $M$, called the  anchor map, fulfilling the following conditions:
	\begin{itemize}
		\item the bracket $[\ ,\ ]_{\mathfrak{A}}$ is $\RR$-bilinear, antisymmetric
		and satisfies the Jacobi identity,
		\item $[X,fY]_{\mathfrak{A}}=f[X,Y]_{\mathfrak{A}}+p(X)(f)Y$ for all $X,\ Y \in
	C^\infty (M;\mathfrak{A})$ and $f$ a smooth function of $M$, 
		\item $p([X,Y]_{\mathfrak{A}})=[p(X),p(Y)]$ for all
		$X,\ Y \in C^\infty (M;\mathfrak{A})$.
	\end{itemize}
	Here, with a small abuse, we are using the same notation for the bundle map $p$ 
	and for the map induced by $p$ on the sections of the two bundles.
	\end{definition}

Let $G$ be a Lie groupoid.
The  tangent space to $s$-fibers, that is $T_sG := \ker ds$ restricted to the objects of $G$ is
$\bigcup_{x\in G^{(0)}} TG_x$  and it has the structure of  Lie algebroid
on $G^{(0)}$, with the anchor map given by $dr$. 
It is denoted by 
$\mathfrak{A}G$ and we call it the Lie algebroid of $G$.
It is easy to prove that $\mathfrak{A}G$ is isomorphic to the normal bundle of the inclusion $G^{(0)}\hookrightarrow G$. 

Given a Lie algebroid  $\mathfrak{A} =(p:\mathfrak{A}\rightarrow TM,[\ ,\ ]_{\mathfrak{A}})$
on manifold $M$ we can ask whether it can be integrated, i.e. whether it is the Lie algebroid 
of a Lie groupoid. As clarified in \cite{almeida-molino} \cite{Cr-Fe} this is not always possible; however, as we shall see in 
moment, there are sufficient conditions ensuring the existence of a Lie groupoid integrating a given Lie algebroid.

Notice that if a Lie algebroid is integrable then it can be the Lie algebroid associated to different Lie groupoids;
for example if $M$ is a smooth compact manifold with universal cover $M_\Gamma$ and $\mathfrak{A}=(\Id : TM\to TM)$ is the  Lie algebroid over $M$
given by the identity map, then $\mathfrak{A}$ is the Lie algebroid associated to the pair groupoid 
$M\times M\rightrightarrows M$ but also to the groupoid $M_\Gamma \times_\Gamma M_\Gamma \rightrightarrows M$.

\section{The groupoid associated to stratified spaces} 
\label{phi-grpd}


We now go back to the  smoothly stratified space ${}^{\textrm{S}} X$. Let $X$ be its  resolution, a manifold with fibered corners. Consider the Lie algebra of vector fields $\mathcal{V}_{{\rm \Phi}} (X)$ and the associated
Lie algebroid $({}^{{\rm \Phi}} TX, i_{{\rm \Phi}})$. We wish to integrate this  algebroid to a groupoid. We follow \cite{DLR}.

 Let $H_i \xrightarrow{\phi_i} S_i$, $i\in\{1,\dots,k\}$ be an exhaustive list
of the boundary hypersurfaces of $X$ and let $x_i$ be a boundary defining function for $H_i$. We assume that
$$i<j\,\Rightarrow H_i < H_j\;\;\text{or}\;\;H_i\cap H_j=\emptyset\,.$$  By means of the Lie algebra of ${{\rm \Phi}}$ vector fields $\mathcal{V}_{{\rm \Phi}} (X)$ we have defined the  Lie algebroid ${}^{{\rm \Phi}} TX$
with anchor map $$i_{{\rm \Phi}}: {}^{{\rm \Phi}} TX\rightarrow TX\,.$$ Notice that the anchor map is injective (in fact, an isomorphism)
when restricted to $\mathring{X}$, a dense open subset of $X$. According to a theorem of Debord, see
\cite[Theorem 2]{debord-jdg} we thus obtain that the Lie algebroid ${}^{{\rm \Phi}} TX$ is integrable. Following
 \cite{DLR} we can explicitly exhibit a groupoid $G_{{\rm \Phi}}\rightrightarrows X$ integrating the Lie algebroid ${}^{{\rm \Phi}} TX$.
 This is described as follows: over $\mathring{X}$ the groupoid $G_{{\rm \Phi}}$ is simply the pair groupoid
 $\mathring{X}\times \mathring{X}$; over the boundary of $X$ the groupoid $G_{{\rm \Phi}}$ is given by
 $$\bigsqcup_{i=1}^k (H_i\underset{\phi_i}{\times} {}^{{\rm \Phi}} T S_i \underset{\phi_i}{\times}  H_i)_{|G_i} \times\RR$$
 with $G_i= H_i\setminus \cup_{j>i} H_j$. The range an source map are given as follows:
 if $h$ and $h^\prime$ are points in $G_i$, with $\phi_i (h)=s=\phi_i (h^\prime)$, and if $v\in T_s S_i$ then 
 $$s(h,v,h^\prime,\lambda)= h\,,\qquad r(h,v,h^\prime,\lambda)= h^\prime\,.$$
 The composition of two composable elements $(h,v,h^\prime,\lambda)$ and $(h^\prime,w,h^{\prime\prime},\mu)$ is given, by definition, by
 $(h,v+w,h^{\prime\prime},\lambda+\mu)$.\\
 Although only a set for the time being, $G_{{\rm \Phi}}$ can be given a smooth structure by observing,
 as in
 \cite{DLR}, that there is a natural bijection  \begin{equation}\label{inclusion}
 G_{{\rm \Phi}}\longleftrightarrow\mathring{X^2_{{\rm \Phi}}}\cup \mathring{{\rm ff}}_{{\rm \Phi}}\end{equation}
 with $X^2_{{\rm \Phi}}$ the ${{\rm \Phi}}$-double space (introduced in the next section) and 
 ${\rm ff}_{{\rm \Phi}}$ its front face. Using the properties of the ${{\rm \Phi}}$-double space it is proved
 in \cite{DLR} , see page 26 there, and \cite[Section 4.3, Proposition 6]{lescure-hdr}, that $G_{{\rm \Phi}}$ integrates the Lie algebroid $({}^{{\rm \Phi}} TX, i_{{\rm \Phi}})$.
 For a slightly different approach to $G_{{\rm \Phi}}\rightrightarrows X$ see also the last section of this paper.

We shall be also concerned with another groupoid integrating the Lie algebroid ${}^{{\rm \Phi}} TX$. This is the Lie groupoid
$G^\Gamma_{{\rm \Phi}}\rightrightarrows X$ which is described as follows.
Recall the Galois covering 
$X_\Gamma \xrightarrow{p} X$ and the structure of the total space $X_\Gamma$
as a manifolds with fibered corners $H_{i, \Gamma}$ where, as
discussed in \eqref{compatible-bfs}, we have 
\begin{equation}\label{compatible-bfs-2}
\xymatrix{H_{i,\Gamma}\ar[r]^(.5){p}\ar[d]^{\phi_{i,\Gamma}}&H\ar[d]^{\phi_i}\\
	S_{i,\Gamma}\ar[r]^(.5){p}& S }
\end{equation} 
$p$ denoting the quotient map with respect to the induced $\Gamma$-action.
Consider now $G_{{{\rm \Phi}},\Gamma}\rightrightarrows X_\Gamma$ defined precisely as before. This groupoid is freely acted upon by
$\Gamma$: if $g$ is an element of $\Gamma$ and $(x,y)$ is in  $\mathring{X}_\Gamma \times\mathring{X}_\Gamma$ , then $g\cdot(x,y)=(g\cdot x,g\cdot y)$; if instead $(x,\xi,y,t)$ is an element over the boundary, then $g\cdot(x,\xi,y,t)= (g\cdot x,dg\cdot\xi,g\cdot y,t)$. The quotient is our groupoid $G^\Gamma_{{\rm \Phi}} \rightrightarrows X$.
Then, by construction and by  \eqref{compatible-bfs-2}, $G^\Gamma_{{\rm \Phi}}\rightrightarrows X$ is equal  to $\mathring{X}_\Gamma \times_\Gamma \mathring{X}_\Gamma$
over $\mathring{X}$ and is equal to  $$\bigsqcup_{i=1}^k (H_i\underset{\phi_i}{\times} {}^{{\rm \Phi}} T S_i \underset{\phi_i}{\times}  H_i)_{|G_i} \times\RR$$
on $\pa X$.

\subsection{Notation.}
In the sequel we shall use exclusively the groupoid  $G^\Gamma_{{\rm \Phi}}\rightrightarrows X$. We shall often denote denote the groupoid $G^\Gamma_{{\rm \Phi}}\rightrightarrows X$  simply by  $G\rightrightarrows X$.


\section{Pseudodifferential operators}\label{sect:pseudo}
Let $\Diff^*_{{\rm \Phi}} (X)$ be the  algebra generated by products of elements of  the Lie algebra $\mathcal{V}_{{\rm \Phi}} (X)$
and of functions in $C^\infty (X)$. The algebra
$\Diff^*_{{\rm \Phi}} (X)$ is contained in an algebra of  pseudodifferential operator, which can be described in two compatible ways. 

The first way  is purely microlocal and it involves a blown-up double space on which the Schwartz kernel of the pseudodifferential operators can be easily described. The general philosophy  underlying this approach is due to Merlose; for this particular Lie algebra of vector fields the corresponding pseudodifferential calculus is due to Mazzeo and Melrose for manifolds with fibered boundary and to Debord, Lescure and Rochon for manifolds with fibered corners.

The second way to enlarge $\Diff^*_{{\rm \Phi}}(X)$  to a pseudodifferential algebra  is to use the Lie groupoid $G_{{\rm \Phi}}$ and the pseudodifferential algebra  canonically associated to it. 

We shall now briefly recall these two points of view. 

\subsection{The ${{\rm \Phi}}$-calculus} \label{phi-calculus}
In this subsection, following the seminal paper of Mazzeo and Melrose \cite{MMPHI} and its extension to manifolds with fibered corners given in 
\cite{DLR}, we recall briefly how the ${{\rm \Phi}}$-calculus on a manifold with fibered corners $X$
is defined. According to the general philosophy put forward by Richard Melrose, we define the ${{\rm \Phi}}$-pseudodifferential
operators by specifying properties of their Schwartz kernels on $X\times X$ and we do this by lifting
these kernels to a resolved product space, the ${{\rm \Phi}}$-double space. Let $H_i \xrightarrow{\phi_i} S_i$, $i\in\{1,\dots,k\}$, an exhaustive list
of the boundary hypersurfaces of $X$ and let $x_i$ be a boundary defining function for $H_i$. We assume as before that
$$i<j\,\Rightarrow H_i < H_j\;\;\text{or}\;\;H_i\cap H_j=\emptyset\,.$$
In order to define the
${{\rm \Phi}}$-double space we first blow-up the $p$-submanifolds
 \footnote{We recall that a p-submanifold $D$ of a manifold with corners $M$ is a smooth submanifold  which meets all boundary faces of $M$ transversally, and which is covered by coordinate neighborhoods $\{U,(x,y)\}$ in $M$ such that
 	$D\cap U =\{y_j=0\,;\, j=1,\dots,\mathrm{codim}(D)\}$} $H_i\times H_i$ in $X\times X$, thus obtaining
the $b$-double space
$$X^2_b:=[X\times X; H_1\times H_1; H_2\times H_2,\dots ;H_k\times H_k]$$
also denoted $X\times_b X$. Blowing up the corners $H_i\times H_i$
in a different order will result in a  diffeomorphic manifold. The submanifolds $$D_{\phi_i}=\{(h,h')\in H_i \times H_i \,\,\mbox{s.t.}\,\,\phi_i(h)=\phi_i(h') \}$$ of $H_i \times H_i$ can be lifted to 
$X^2_b$ where they become $p$-submanifolds, denoted $\Delta_{\phi_i}$. The ${{\rm \Phi}}$-double space is, by definition,
the space obtained by blowing-up in $X^2_b$ the $p$-submanifolds $\Delta_{\phi_1},\dots\Delta_{\phi_k}$, in this order:
$$X^2_{{\rm \Phi}}:=[X^2_b;\Delta_{\phi_1};\dots;\Delta_{\phi_k}]$$
We shall also denote it by $X\times_{{\rm \Phi}} X$.
There are well defined blow-down   maps $$\beta_b: X^2_b\to X^2\,,\;\;
\beta_{{\rm \Phi}-b}: X^2_{{\rm \Phi}}\to X^2_b\,\;\; \beta_{{\rm \Phi}}:=\beta_b\circ \beta_{{\rm \Phi}-b} : X^2_{{\rm \Phi}}\to X^2;$$ the front faces associated
to the boundary hypersurfaces are, by definition, the $p$-submanifolds
$$\operatorname{ff}_{\phi_i}:= \beta_{{\rm \Phi}-b}^{-1} (D_{\phi_i})\,.$$
They are boundary hypersurfaces of $X^2_{{\rm \Phi}}$. We use the notation $\operatorname{ff}_{{\rm \Phi}}:=\cup_{i=1}^{i=k} \operatorname{ff}_{\phi_i}$.  Of particular importance is also the lifted diagonal
$$\Delta_{{\rm \Phi}}:=\overline{\beta_{{\rm \Phi}}^{-1}(\mathring{\Delta}_X)}$$
If $E$ and $F$ are vector bundles over $X$, then the space of ${{\rm \Phi}}$-pseudodifferential operator of order $m$ 
is defined in terms of the conormal distrubutions at $\Delta_{{\rm \Phi}}$:
\begin{equation*}
\Psi_{{\rm \Phi}}^m(X;E,F) :=\{K\in I^m (X^2_{{\rm \Phi}};\Delta_{{\rm \Phi}};\beta^*_{{\rm \Phi}} {\rm HOM}(E,F)\otimes \pi_R^* (\Omega^{{\rm \Phi}}));
K\equiv 0 \;\text{at}\; \partial X^2_{{\rm \Phi}} \setminus \operatorname{ff}_{{\rm \Phi}})\}
\end{equation*}
where $\equiv 0$ means vanishing of the Taylor series and $\pi_R$ is the map induced by the projection on the
right factor. Classical ${\rm \Phi}$-pseudodifferential operators are defined in terms of polyhomogeneuos conormal distributions; they give the space $\Psi^m_{{\rm \Phi}-{\rm ph}}(X;E,F)$.
It is not difficult to show that the Schwartz kernels of the ${\rm \Phi}$-differential operators of order $m$ lift 
to the ${\rm \Phi}$-double space where they define elements in $I^m_{{\rm ph}} (X^2_{{\rm \Phi}};\Delta_{{\rm \Phi}};\beta^*_{{\rm \Phi}} {\rm HOM}(E,F)\otimes \pi_R^* (\Omega^{{\rm \Phi}}))$; thus 
$$\Diff^m_{{\rm \Phi}}(X;E,F)\subset \Psi_{{\rm \Phi},{\rm ph}}^m(X;E,F)\subset \Psi_{{\rm \Phi}}^m(X;E,F)\,.$$ There is a short exact sequence, the principal symbol sequence, $$0\to \Psi^{m-1}_{{\rm \Phi}} (X;E,F)\to \Psi^{m}_{{\rm \Phi}} (X;E,F)\xrightarrow{\sigma_m}
S^{[m]} ({}^{{\rm \Phi}} TX,p^* {\rm HOM} (E,F))\to 0\,,$$
with $p: {}^{{\rm \Phi}} TX\to X$ the natural projection. In addition to the principal symbol of $P\in 
 \Psi_{{\rm \Phi}}^m(X;E,F)$ one can consider its boundary symbols 
$\sigma_{\partial_i} (P)$, $i\in\{1,\dots,k\}$, also called {\em normal operators}. Defined initially by the restriction of the Schwartz kernel of $P$ to the 
 front face $\operatorname{ff}_{\phi_i}$, the operator $\sigma_{\partial_i} (P)$ is in fact a ${}^{{\rm \Phi}} N S_i$-suspended  family
 of ${\rm \Phi}$-operators, with ${}^{{\rm \Phi}} N S_i=TS_i\times\bbR$; in formulae
 $$\sigma_{\partial_i} (P)\in \Psi_{{\rm \Phi}-{\rm sus}({}^{{\rm \Phi}} N S_i)} (H_i/S_i;E,F)$$
  This means that $\sigma_{\partial_i} (P)$ is a smoothly varying
 family of operators parametrized by
 $S_i$, the base of $H_i$, and such that
 $$ \sigma_{\partial_i} (P)(s)\in \Psi_{{\rm \Phi}-{\rm sus}(T_s S_i\times \bbR)} (\phi_i^{-1} (s);E,F)\,.$$
 In practise  this means that  $\sigma_{\partial_i} (P)(s)$ is an operator on $T_s S_i \times\bbR \times \phi_i^{-1} (s)$
 which is translation invariant in the $T_s S_i \times\bbR$ direction and thus with Schwartz kernel
 in $T_s S_i \times\bbR\times \phi_i^{-1} (s)\times \phi_i^{-1} (s)$. \\
 We shall say that  $P\in \Psi_{{\rm \Phi}}^m (X;E,F)$  is elliptic if 
$\sigma_m (P)$ is invertible off the zero section of ${}^{{\rm \Phi}} TX$; we shall say that $P$ is fully elliptic if it is elliptic and if, in addition, $\sigma_{\partial_i}(P)$ is invertible for each $i\in\{1,\dots,k\}$ \footnote{where invertibility
is meant for each parameter $s\in S_i$ and for each $i\in\{1,\dots,k\}$ as a map from the space of rapidly decreasing sections of $T_s S_i \times\bbR \times \phi_i^{-1} (s)$ into itself}.\\ 
One can prove a composition formula for these operators: if $P\in \Psi_{{\rm \Phi}}^m(X;E,F)$, $Q\in \Psi_{{\rm \Phi}}^\ell (X;F,G)$
then $Q\circ P\in \Psi_{{\rm \Phi}}^{m+\ell}(X;E,G)$; moreover the principal symbol and the boundary symbols are multiplicative.\\
The main result in the ${\rm \Phi}$-calculus is the following parametrix construction:\\ {\it if $P\in \Psi_{{\rm \Phi}}^m(X;E,F)$
is fully elliptic then there exists $Q\in  \Psi_{{\rm \Phi}}^{-m} (X;F,E)$ such that 
\begin{equation}\label{parametrix}
Q\circ P-\Id \in \dot{\Psi}_{{\rm \Phi}}^{-\infty} (X;E)\,,\quad P\circ Q-\Id \in \dot{\Psi}_{{\rm \Phi}}^{-\infty} (X;F)
\end{equation}
where $ \dot{\Psi}_{{\rm \Phi}}^{-\infty}(X;E) $ denotes the  ${\rm \Phi}$-operators of order $(-\infty)$ that vanish of infinite
order at {\em all} boundary hypersurfaces of $X^2_{{\rm \Phi}}$. } \\Notice that elements in  $ \dot{\Psi}_{{\rm \Phi}}^{-\infty}(X;E) $
are in fact defined by smooth kernels in $\dot{C}^\infty (X\times X, {\rm END}(E))$ where the dot means vanishing of infinite order at the boundary.
We point out  that in contrast with the $b$-calculus or the edge calculus, the parametrix of a fully elliptic ${\rm \Phi}$-operator
is an element
of $\Psi^*_{{\rm \Phi}}$; there is no need to enlarge the calculus. Using this crucial information
it is not difficult to show that \begin{equation}\label{made-bded}
P\in \Psi_{{\rm \Phi}}^m(X;E)\Rightarrow P\circ (P^* P + \Id)^{-\frac{1}{2}}\in  \Psi_{{\rm \Phi}}^0 (X;E)
\end{equation}
One can introduce ${\rm \Phi}$-Sobolev spaces and prove the boundedness of  $P\in \Psi_{{\rm \Phi}}^m(X;E,F)$
from $H^\ell_{{\rm \Phi}} (X;E)$ to $H^{\ell-m}_{{\rm \Phi}} (X;E)$; in particular 0-th order ${\rm \Phi}$-pseudodifferential operators
are $L^2$-bounded.\\ 

We close this subsection with a few comments on the equivariant case: if $X$ is the resolution of 
${}^{\textrm{S}} X$ and $X_\Gamma$ is the resolution of a Galois cover $ {}^{\textrm{S}} X_\Gamma$,  $\Gamma - {}^{\textrm{S}} X_\Gamma \to
{}^{\textrm{S}} X$, then we know that $X_\Gamma$ is a Galois cover of $X$,
\begin{equation}\label{cover}
\Gamma  - X_\Gamma\xrightarrow{p} X
\end{equation}
 with an additional property encoded by 
\eqref{compatible-bfs}. Consider in this context the diagonal action of $\Gamma$ on $X_\Gamma\times X_\Gamma$;
there is a lift of this action on $X_{\Gamma}\times_b X_{\Gamma}$ (this is already considered in \cite{LPMEMOIRS}) and on
$$X_{\Gamma}\times_{{\rm \Phi}} X_{\Gamma}$$ and it is therefore possible to define the space of $\Gamma$-equivariant 
${\rm \Phi}$-pseudodifferential operators of order $m$, $\Psi^m_{{\rm \Phi},\Gamma}(X_\Gamma)$, and similarly for 
$\Psi^m_{{\rm \Phi},\Gamma}(X_\Gamma,E_\Gamma)$ if $E_\Gamma$ is a $\Gamma$-equivariant 
complex vector bundle on $X_\Gamma$.\\ An operator is of $\Gamma$-compact support if the support 
of its Schwartz kernel is compact in $X_{\Gamma}\times_{{\rm \Phi}} X_{\Gamma}/\Gamma$. 
We denote by $\Psi^m_{{\rm \Phi},\Gamma,c}(X_\Gamma,E_\Gamma)$ the resulting vector space.
Remark that a  $\Gamma$-equivariant  ${\rm \Phi}$-differential
operator
is certainly of $\Gamma$-compact support.
The composition of two ${\rm \Phi}$-operators of $\Gamma$-compact support is well defined and still a ${\rm \Phi}$-operator
of $\Gamma$-compact support. We thus obtain an algebra  $\Psi^*_{{\rm \Phi},\Gamma,c}(X_\Gamma,E_\Gamma)$.\\ Finally, the boundary symbols of a $\Gamma$-equivariant ${\rm \Phi}$-differential operator 
$D_\Gamma$ can be identified  with the boundary symbols of the  operator $D$ induced  on the quotient $X$. 
Indeed, if $H_\Gamma\xrightarrow{\phi_\Gamma} S_\Gamma$
is a boundary hypersurface of $X_\Gamma$ then we know that there is no action
of $\Gamma$ in the fibers of $H_\Gamma$; $\sigma_{\partial} (D_\Gamma)$ is a $S_\Gamma$-family of operators, which is $\Gamma$-equivariant. Thus   $\sigma_{\partial} (D_\Gamma)(s)$ is an operator on $T_s S_\Gamma \times\bbR \times \phi_\Gamma ^{-1} (s)\equiv  T_{p (s)} S\times \bbR \times \phi^{-1} (\phi(s))$ 
which is, in addition,
$\Gamma$-equivariant with respect to the variable $s$; this means that 
$$\sigma_{\partial} (D_\Gamma)(s)=\sigma_{\partial} (D_\Gamma)(\gamma s);$$
thus, up to the above identifications,  we can obtain the boundary operator of $D_\Gamma$ at the boundary hypersurface $H_\Gamma\to S_\Gamma$ of $X_\Gamma$ from the boundary operator
of $D$
at the  boundary hypersurface $H\to S$ of $X$.\\
This reasoning establishes the following important fact:
\begin{proposition}\label{prop:parametrix-Gamma}
If $D_\Gamma$ is a fully elliptic element in $\Diff^m_{{\Phi},\Gamma}(X_\Gamma,E_\Gamma)$ then
there exists a parametrix $Q_\Gamma$ of $\Gamma$-compact support, $Q_\Gamma\in  
\Psi^{-m}_{{\rm \Phi},\Gamma,c}(X_\Gamma;F_\Gamma,E_\Gamma)$, such that  
\begin{equation}\label{parametrix-Gamma}
Q_\Gamma\circ D_\Gamma-\Id \in \dot{\Psi}_{{\rm \Phi},\Gamma,c}^{-\infty} (X_\Gamma;E_\Gamma)\,,\quad D_\Gamma\circ Q_\Gamma-\Id \in \dot{\Psi}_{{\rm \Phi},\Gamma,c}^{-\infty} (X_\Gamma;F_\Gamma)
\end{equation}
\end{proposition}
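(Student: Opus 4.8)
The plan is to reduce the equivariant statement to the non-equivariant parametrix theorem of the ${\rm \Phi}$-calculus recalled in \S\ref{phi-calculus}, by descending $D_\Gamma$ to the compact base $X$, solving there, and lifting the resulting parametrix back to the cover. This is exactly the strategy foreshadowed by the boundary-symbol discussion in the paragraph preceding the statement.

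First I would descend $D_\Gamma$ to the base. Because differential operators are local and $X_\Gamma\xrightarrow{p}X$ is a free Galois $\Gamma$-covering, the $\Gamma$-equivariant bundles $E_\Gamma,F_\Gamma$ descend to bundles $E,F$ on $X$ with $E_\Gamma=p^*E$ and $F_\Gamma=p^*F$, and the $\Gamma$-invariant local expression of $D_\Gamma$ descends to a unique $D\in\Diff^m_{{\rm \Phi}}(X;E,F)$ whose lift is $D_\Gamma$. I would then check that full ellipticity descends: invertibility of the principal symbol off the zero section is a purely local condition, so $\sigma_m(D)$ is invertible iff $\sigma_m(D_\Gamma)$ is; and, by the boundary-symbol identification established just before the statement, the normal operators $\sigma_{\partial_i}(D)$ are identified fibrewise with $\sigma_\partial(D_\Gamma)$, so invertibility of the former is equivalent to invertibility of the latter. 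Hence $D$ is a fully elliptic element of $\Diff^m_{{\rm \Phi}}(X;E,F)$ on the compact manifold with fibered corners $X$.

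Next I would apply the parametrix construction of the ${\rm \Phi}$-calculus \eqref{parametrix} to $D$, obtaining $Q\in\Psi^{-m}_{{\rm \Phi}}(X;F,E)$ with $Q\circ D-\Id\in\dot{\Psi}^{-\infty}_{{\rm \Phi}}(X;E)$ and $D\circ Q-\Id\in\dot{\Psi}^{-\infty}_{{\rm \Phi}}(X;F)$. I would then lift $Q$ to the cover: pulling its Schwartz kernel back along the covering map on the double space $X_\Gamma\times_{{\rm \Phi}}X_\Gamma\to X^2_{{\rm \Phi}}$, which exists by the compatibility \eqref{compatible-bfs} of the boundary fibration structures under the $\Gamma$-action, produces a $\Gamma$-equivariant operator $Q_\Gamma$. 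Since $Q$ is a pseudodifferential operator on the compact $X$, its kernel has compact support in $X^2_{{\rm \Phi}}$, so the pulled-back kernel has support compact modulo $\Gamma$, i.e. compact in $X_\Gamma\times_{{\rm \Phi}}X_\Gamma/\Gamma$; thus $Q_\Gamma\in\Psi^{-m}_{{\rm \Phi},\Gamma,c}(X_\Gamma;F_\Gamma,E_\Gamma)$.

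Finally I would transport the parametrix identities upstairs. Since lifting is compatible with the composition of operators of $\Gamma$-compact support (this composition is well-defined, as recalled in \S\ref{phi-calculus}) and with the $\Gamma$-action, the operators $Q_\Gamma\circ D_\Gamma$ and $D_\Gamma\circ Q_\Gamma$ descend to $Q\circ D$ and $D\circ Q$; consequently $Q_\Gamma\circ D_\Gamma-\Id$ and $D_\Gamma\circ Q_\Gamma-\Id$ are precisely the lifts of the downstairs remainders, which lie in $\dot{\Psi}^{-\infty}_{{\rm \Phi}}(X)$. As those remainders are smoothing and vanish to infinite order at all boundary hypersurfaces of $X^2_{{\rm \Phi}}$, their lifts are smoothing, $\Gamma$-compactly supported, and vanish to infinite order at the boundary of $X_\Gamma\times_{{\rm \Phi}}X_\Gamma$, i.e. they lie in $\dot{\Psi}^{-\infty}_{{\rm \Phi},\Gamma,c}(X_\Gamma)$, as required. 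The main obstacle I anticipate is the bookkeeping of supports on the blown-up double space: one must verify that the descent/lift correspondence intertwines kernel convolution with operator composition and genuinely preserves both the $\Gamma$-compact support condition and the infinite-order vanishing at $\partial X^2_{{\rm \Phi}}$, so that no spreading of support violates $\Gamma$-compactness. Everything else is a direct transcription of the non-equivariant theory.
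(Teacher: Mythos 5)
Your first half---descending $D_\Gamma$ to the induced operator $D$ on the compact quotient and checking that full ellipticity descends via the boundary-symbol identification---is exactly the paper's own argument: the "proof" in the paper consists precisely of this observation, made in the paragraph preceding the statement. The genuine gap is in your lifting step, in two respects. First, the covering map $X_\Gamma\times_{{\rm \Phi}}X_\Gamma\to X^2_{{\rm \Phi}}$ that you invoke does not exist. What \eqref{compatible-bfs} does give is a covering $X_\Gamma\times_b X_\Gamma\to X^2_b$ of $b$-double spaces; but under that covering the preimage of the blow-up center $\Delta_{\phi_i}$ is the union of \emph{all} the translates $(\id\times\gamma)\Delta_{\phi_{i,\Gamma}}$, $\gamma\in\Gamma$, because the condition $\phi_i(h)=\phi_i(h')$ downstairs only forces $\phi_{i,\Gamma}(\tilde h)=\gamma\cdot\phi_{i,\Gamma}(\tilde h')$ for \emph{some} $\gamma$. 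The ${\rm \Phi}$-double space of $X_\Gamma$ blows up only the $\gamma=e$ translate, so the covering of $b$-double spaces does not lift to the ${\rm \Phi}$-blow-ups: at a point of a translate with $\gamma\neq e$ the would-be map has no continuous extension, since nearby points would have to be sent into the front face $\operatorname{ff}_{\phi_i}$ with no consistent choice of fiber point. Second, even along an honest covering, pullback of Schwartz kernels is the wrong notion of lift: the support of the pulled-back kernel is the full preimage of the support of the kernel of $Q$, which is \emph{not} compact modulo $\Gamma$ when $\Gamma$ is infinite (modulo the diagonal action the covering is still $|\Gamma|$-to-one); moreover the operator determined by the pulled-back kernel is $p^*\circ Q\circ p_*$, with $p_*$ the fiberwise sum over sheets, so that $Q_\Gamma\circ D_\Gamma-\Id=p^*(Q\circ D)p_*-\Id$ contains the term $p^*p_*-\Id=\sum_{\gamma\neq e}\gamma^*$, which is nowhere near an element of $\dot{\Psi}^{-\infty}_{{\rm \Phi},\Gamma,c}(X_\Gamma;E_\Gamma)$. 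Thus both conclusions you draw from the pullback---$\Gamma$-compact support and the parametrix identities---fail as stated.

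The missing idea that repairs the strategy is a support cutoff \emph{before} lifting. Replace $Q$ by a truncation whose Schwartz kernel is supported in a small neighborhood $V$ of $\Delta_{{\rm \Phi}}\cup\operatorname{ff}_{{\rm \Phi}}$ in $X^2_{{\rm \Phi}}$; this changes $Q$ only by an element of $\dot{\Psi}^{-\infty}_{{\rm \Phi}}(X;F,E)$, so the truncation is still a parametrix with remainders of the same quality, by \eqref{parametrix}. Now \eqref{compatible-bfs} pays off in the correct way: since $p$ is a diffeomorphism on the fibers of the boundary fibrations and the induced $\Gamma$-action on $S_{i,\Gamma}$ is free, one has $D_{\phi_{i,\Gamma}}/\Gamma\cong D_{\phi_i}$, hence identifications of $\operatorname{ff}_{\phi_{i,\Gamma}}/\Gamma$ with $\operatorname{ff}_{\phi_i}$ and of the lifted diagonal of $X_\Gamma\times_{{\rm \Phi}}X_\Gamma$ modulo $\Gamma$ with $\Delta_{{\rm \Phi}}$; consequently, for $V$ small enough there is a $\Gamma$-invariant neighborhood $U$ of the lifted diagonal and front faces upstairs with $U/\Gamma\cong V$, compatibly with the blow-downs. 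Lifting the truncated kernel through this identification produces $Q_\Gamma\in\Psi^{-m}_{{\rm \Phi},\Gamma,c}(X_\Gamma;F_\Gamma,E_\Gamma)$ of genuinely $\Gamma$-compact support, and since $D_\Gamma$ is a local operator and all supports lie in $U$, the compositions upstairs are the lifts of the compositions downstairs, so the remainders lie in $\dot{\Psi}^{-\infty}_{{\rm \Phi},\Gamma,c}$. This is the ${\rm \Phi}$-analogue of the $b$-calculus argument of \cite{LPMEMOIRS}; the alternative, which the paper's terse wording suggests, is to run the Mazzeo--Melrose/Debord--Lescure--Rochon construction of \cite{MMPHI}, \cite{DLR} directly upstairs, $\Gamma$-equivariantly, inverting the descended principal symbol and normal operators.
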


\noindent
The same proposition can be stated replacing $D_\Gamma$ by  a fully elliptic element  $P\in \Psi^m_{{\rm \Phi},\Gamma,c}(X_\Gamma,E_\Gamma)$.

\subsection{Pseudodifferential operators on a groupoid $G$}\label{subsect:pseudo-G}
Given a Lie groupoid $G$ it is possible to define an algebra of $G$-pseudodifferential operator $\Psi^*_c (G)$, see \cite{MP, NWX}.

%
%
%
%
%
Let us recall briefly the general definition.
\begin{definition}
	A linear $G$-operator is a continuous linear map
	$$ P\colon C^\infty_c(G,\Omega^{\frac{1}{2}})\to C^\infty(G,\Omega^{\frac{1}{2}})$$
	such that:
	\begin{itemize}
		\item $P$ restricts to a continuous family $(P_x)_{x\in G^{(0)}}$ of linear operators $P_x\colon C^\infty_c(G_x,\Omega^{\frac{1}{2}})\to C^\infty(G_x,\Omega^{\frac{1}{2}})$
		such that
		$$Pf(\gamma)= P_{s(\gamma)}f_{s(\gamma)}(\gamma) \quad \forall f\in C^\infty_c(G,\Omega^{\frac{1}{2}})$$
		where $f_x$ denotes the restriction of $f$ to $G_x:=s^{-1}(x)$.
		\item The following equivariance property holds:
		$$ U_{\gamma}P_{s(\gamma)} =P_{r(\gamma)}U_{\gamma},  $$
		where $U_\gamma$ is the map induced on functions by the right multiplication by $\gamma$.
	\end{itemize}
	
	A linear G-operator $P$ is pseudodifferential of order $m$ if 
	\begin{itemize}
		\item  its  Schwartz kernel $k_P$ is a distribution on $G$ that is smooth outside $G^{(0)}$;
		\item for every distinguished chart $\psi: U\subset G\to \Omega\times s(U)\subset\RR^{n-p}\times\RR^{p}$ of $G$:
		$$  \xymatrix{U\ar[rr]^{\psi}\ar[dr]_{s} & &\Omega\times s(U)\ar[dl]^{p_{2}} \\
			&s(U)&  }$$ 
		the operator $(\psi^{-1})^{*}P\psi^{*}:
		C^{\infty}_{c}(\Omega\times s(U))\to C^{\infty}_{c}(\Omega\times
		s(U))$ is a smooth family parametrized by $s(U)$ of  
		pseudodifferential operators of order $m$ on $\Omega$.     
	\end{itemize} 	
	We say that $P$ is smoothing if $k_P$ is smooth and that $P$ is compactly supported if $k_P$ is compactly supported on $G$.
\end{definition}

\noindent
One can we show that the space $\Psi^*_c(G)$ of compactly supported pseudodifferential $G$-operators is an involutive algebra. 

\medskip
\noindent
{\bf Symbol map.} 
Observe that a pseudodifferential $G$-operator induces a family of pseudodifferential operators on the $s$-fibers.
So we can define the principal symbol of a pseudodifferential $G$-operator $P$ as a function $\sigma(P)$ on 
$\mathfrak{S}^*G$, the cosphere bundle associated to the Lie algebroid $\mathfrak{A}G$ by
$$\sigma(P)(x,\xi)=\sigma_{pr} (P_x)(x,\xi)$$
where $\sigma_{pr}(P_{x})$ is the principal symbol of the pseudodifferential
operator $P_{x}$ on the manifold $G_{x}$.  

\medskip
\noindent
{\bf Quantization.} 
Conversely,  given a symbol $f$ of order $m$ on $\mathfrak{A}^{*}G$ we can quantize it
to a pseudodifferential $G$-operator once we have, in addition, the
following data:
\begin{enumerate}
	\item A smooth embedding  $\theta:  U\to \mathfrak{A}G$,  where $ U$ is a open
	set in $G$ containing $G^{(0)}$, such that
	$\theta(G^{(0)})=G^{(0)}$, $(d\theta)|_{G^{0}}=\hbox{Id}$ and $\theta(\gamma)\in\mathfrak{A}_{s(\gamma)}G$
	for all $\gamma\in U$;
	\item A smooth compactly supported map $\phi:G\to \RR_{+}$ such that
	$\phi^{-1}(1)=G^{(0)}$.
\end{enumerate}
Then a $G$-pseudodifferential operator $P_{f, \theta, \phi}$ is obtained by the
formula:
$$ P_{f, \theta, \phi}u(\gamma)=
\int_{\gamma'\in G_{s(\gamma)}\,,\, \xi\in \mathfrak{A}^{*}_{r(\gamma)}(G)}
e^{-i\theta(\gamma'\gamma^{-1})\cdot \xi}f(r(\gamma),
\xi)\phi(\gamma'\gamma^{-1})u(\gamma')$$
with $u\in C^{\infty}_{c}(G,\Omega^{\frac{1}{2}})$. 
The principal symbol of $P_{f, \theta, \phi}$ is just the leading
part of $f$.  

By definition, operators of zero order $\Psi_c^0(G)$ are a subalgebra of the multiplier algebra
$M(C^{*}_r(G))$ (that is nothing but the algebra of bounded adjointable operators on the $C^*_r(G)$-module given by $C^*_r(G)$ itself), whereas  operators of negative order are actually in
$C^{*}_r(G)$. In what follows  we will denote $\overline{\Psi_c^0(G)}$ the C*-algebra obtained as the closure of $0$-order $G$-pseudodifferential operator in the multiplier algebra $M(C^{*}_r(G))$.
All these definitions and properties immediately extend to the case of operators acting
between sections of bundles on $G^{(0)}$ pulled back to $G$ with the
range map $r$. The space of compactly supported pseudodifferential
operators on  $G$ acting on sections of $r^*E\otimes \Omega^{\frac{1}{2}}$ and taking values in sections of $r^*F\otimes \Omega^{\frac{1}{2}}$
will be denoted $\Psi_c^*(G;E, F)$. 
If $F=E$ we get an algebra
denoted by $\Psi_c^*(G;E)$. 
Notice that $\Psi_c^0(G;E)$ is a subalgebra of $\mathbb{B} (\mathcal{E})$ where $\mathcal{E}$ is the Hilber $C^*_r(G)$-module obtained as the closure 
of $C^\infty_c (G; r^* E\otimes \Omega^{\frac{1}{2}})$  with respect to the obvious  $C^*_r(G)$-norm. Similarly we can define
$\mathcal{F}$  from  $C^\infty_c (G; r^* F\otimes \Omega^{\frac{1}{2}})$ and one can easily see that  $\Psi_c^*(G;E, F)$ is a subalgebra of $ \mathbb{B}(\mathcal{E},\mathcal{F})$. \\

\subsection{Simple ellipticity and K-theory classes}\label{subsect:k-classes-general}
An operator is {\it
	elliptic} when its principal symbol is invertible off the zero section of the dual of the algebroid. If $P$ is elliptic   then,   as in the classical situation, 
it has a parametrix inverting it modulo
$\Psi^{-\infty}_{c}(G)=C^{\infty}_{c}(G)$. See \cite{Vassout-jfa}.\\
 
If $P\in \Psi_c^0 (G;E,F)$ is elliptic and $Q$ is a parametrix
with remainders $R$ and $S$ in $C^{\infty}_{c}(G;r^*E\otimes s^*E^*\otimes \Omega^{\frac{1}{2}})\subset \mathbb{K} (\mathcal{E})$ and 
$C^{\infty}_{c}(G;r^*F\otimes s^*F^*\otimes \Omega^{\frac{1}{2}})\subset \mathbb{K} (\mathcal{F})$ respectively, then we can define
a class in $KK(\bbC, C^*_r (G))$ by considering the Kasparov bimodule defined by the operator
$$T:=\begin{pmatrix} 0&Q\\P&0 \end{pmatrix} $$
and  the Hilbert $C^*_r (G)$-module $\mathcal{H}=\mathcal{E}\oplus \mathcal{F}$.
\\
Let $Y\subset G^{(0)}$ be a closed saturated subset, where we recall that  saturated means that $Y$ is  a
union of $G$-orbits. Then we can consider $G_{|Y}$; we assume that 
\begin{equation}\label{short-saturated-reduced}
\xymatrix{0\ar[r]& C^*_r(G_{|X\setminus Y})\ar[r]&C^*_r(G)\ar[r]& C^*_r(G_{|Y})\ar[r]&0}
\end{equation} 
is exact.
By restriction we obtain a pair $(T_{|Y},H_{|Y})$
which  is a Kasparov $(\bbC,C^*_r (G_{|Y}))$-bimodule. Let us assume that this bimodule is degenerate;
then the Kasparov bimodule  $(T,H)$ actually defines an element  $[T,H]$ in $ KK(\bbC, C^*_r (G_{|X\setminus Y})).$ 
See \cite[Lemma 3.2]{skandalis-exact} for a detailed proof of this fact.

\subsection{The algebra $\Psi^*_c (G^\Gamma_{{\rm \Phi}})$}\label{Pseudo-groupoid}
Let us go back to stratified pseudomanifolds and let us investigate what a pseudodifferential $G^\Gamma_{{\rm \Phi}}$-operator $P$ is.\\
By definition, $P$ is really a family of pseudodifferential operators on the $s$-fibers of 
$G^\Gamma_{{\rm \Phi}}$ 
with an additional
equivariant property. Bearing in mind the objects of the groupoid 
$G^\Gamma_{{\rm \Phi}}$ 
(they are given in terms of  the manifold with fibered corners $X$) we have,
correspondingly, two kinds of families:

\begin{itemize}
	\item let $x$ be a point in the interior of $X$, then the $s$-fiber  over $x$ is just $\mathring{X}_\Gamma$ and $P_x$ is a pseudodifferential ${\rm \Phi}$-operator. Moreover,
	since the action of $G^\Gamma_{{\rm \Phi}}$ 
	is transitive when restricted to $\mathring{X}_\Gamma$, the equivariance property implies that $P_x=P_y$ for all $x,y\in X\setminus \partial X $;
	moreover, since the isotropy group $(G^\Gamma_{{\rm \Phi}})_x^x$ (which we recall is defined as $s^{-1}(x)\cap r^{-1}(x)$) is isomorphic to $\Gamma$, it follows that $P_x$ is $\Gamma$-equivariant for all $x\in X\setminus \partial X $;
	\item let $h$ be a point in the boundary $\partial X$ of $X$ and let $Z$ be the fiber of $\phi$ over $h$; $s^{-1}(h)$ is isomorphic to $Z\times\R^n\times\R$.
	So $P_h$ is a pseudodifferential operator on  $Z\times\R^n\times\R$ and, by the equivariance property
	one has that $P_h$ is translation invariant on the euclidean part and that $P_h=P_k $ for all $h,k\in \partial X$ such that $\phi(h)= \phi(k)$. Notice that on the boundary there is no $\Gamma$-equivariancy condition. 
	\end{itemize}

\subsection{Simple ellipticity versus full ellipticity}	
If a pseudodifferential $G^\Gamma_{{\rm \Phi}}$-operator 
is elliptic, then, as observed before, it is invertible modulo $C^\infty_c (G^\Gamma_{{\rm \Phi}})\subset C^*_r (G^\Gamma_{{\rm \Phi}})$. 
This $C^*$-algebra is too big; for example if  $\Gamma=\{1\}$ then 
$C^*_r (G_{{\rm \Phi}})$ (or even $C^\infty_c (G_{{\rm \Phi}})$)
  is not contained in the algebra of compact operators on $L^2$. This means 
 that, in contrast with the closed compact case,  an element that is elliptic, it is not in general  Fredholm.
 More generally, an elliptic pseudodifferential $G^\Gamma_{{\rm \Phi}}$-operator  will define an index class in
 $K_* ( C^*_r (G^\Gamma_{{\rm \Phi}}))$ but this K-theory group is not very  interesting for the questions we address in this article.
If we want to work with Fredholm operators, or more generally, if we want to define interesting K-theory classes, 
we must  consider   operators that are not only elliptic but also 
have  some additional property that ensures, to the very least, their invertibility modulo compacts. 
On the basis of the microlocal approach explained in the previous section, we know that a sufficient condition
is given by the invertibility of the normal families, but at this stage we want to get to this condition in an autonomous way,
i.e. within the theory of groupoids.  To this end we begin by observing that
when $\Gamma=\{1\}$ the algebra of compact operators on $L^2$ is realized by the $C^*$-algebra 
$C^*_r(\mathring{X}\times\mathring{X})$, the C*-algebra of the restriction of $G_{{\rm \Phi}}$ to $\mathring{X}=X\setminus \partial X$.
So, we are looking for a condition  that would ensure invertibility 
modulo $C^*_r(\mathring{X}\times\mathring{X})$. This is when $\Gamma=\{1\}$.
If we are in the true equivariant case, i.e. $\Gamma\not= \{1\}$, this condition translates to being invertible modulo $C^*_r(\mathring{X}_\Gamma\times_\Gamma\mathring{X}_\Gamma)$, the C*-algebra of the restriction of $G^\Gamma_{{\rm \Phi}}$ to $\mathring{X}$; indeed, it is easy to see that $K_* (C^*_r(\mathring{X}_\Gamma\times_\Gamma\mathring{X}_\Gamma))=K_* (C^*_r \Gamma)$ and the latter is certainly an interesting $K$-theory group.

\smallskip
\noindent
The additional condition we are looking for is precisely  {\it full ellipticity} and our immediate goal now
it to get to this condition within  the groupoid approach.

\smallskip
\noindent
Let us denote by $\partial G^\Gamma_{{\rm \Phi}}$ the restriction of $G^\Gamma_{{\rm \Phi}}$ to $\partial X$ 
and let 
$$
\sigma_\partial\colon \overline{\Psi^0_c(G^\Gamma_{{\rm \Phi}})}\to \overline{\Psi^0_c(\partial G^\Gamma_{{\rm \Phi}})}$$
be the morphism of C*-algebras induced by the restriction to the boundary. Recall that here the closure of the groupoid pseudodifferential *-algebra is taken inside the multiplier algebra of the groupoid C*-algebra. See the end of Section \ref{subsect:pseudo-G}.

\noindent
Then $C^*_r(\mathring{X}_\Gamma\times_\Gamma\mathring{X}_\Gamma)=\ker(\sigma)\cap\ker(\sigma_\partial)=\ker(\sigma\oplus\sigma_\partial)$,
where $\sigma\colon \overline{\Psi^0_c(G^\Gamma_{{\rm \Phi}})}\to C(\mathfrak{S}^*G^\Gamma_{{\rm \Phi}})$ is given by the principal symbol.
If we denote by $\sigma_{f.e.}$ the morphism induced by the universal property associated to the following pull-back diagram
\[
\xymatrix{\overline{\Psi^0_c(G^\Gamma_{{\rm \Phi}})}\ar@/^2pc/[drr]^{\sigma}\ar@/_2pc/[ddr]_{\sigma_\partial}\ar@{-->}[dr]^{\sigma_{f.e.}} & & \\
	& \overline{\Psi^0_c(\partial G^\Gamma_{{\rm \Phi}})}\underset{\partial X}{\times} C(\mathfrak{S}^*G^\Gamma_{{\rm \Phi}})\ar[r]\ar[d]& C(\mathfrak{S}^*G^\Gamma_{{\rm \Phi}})\ar[d]\\
	& \overline{\Psi^0_c(\partial G^\Gamma_{{\rm \Phi}})}\ar[r] &   C(\mathfrak{S}^*\partial G^\Gamma_{{\rm \Phi}})}
\] 
then we obtain the following exact sequence of C*-algebras
\begin{equation}\label{f.e.extention}
\xymatrix{
	0\ar[r]& C^*_r(\mathring{X}_\Gamma\times_\Gamma\mathring{X}_\Gamma)\ar[r] & \overline{\Psi^0_c(G^\Gamma_{{\rm \Phi}})}\ar[r]^(.3){\sigma_{f.e.}}&  \overline{\Psi^0_c(\partial G^\Gamma_{{\rm \Phi}})}\underset{\partial X}{\times} C(\mathfrak{S}^*G^\Gamma_{{\rm \Phi}})\ar[r]& 0
}.
\end{equation}

\begin{definition}\label{fully-groupoid}
	A pseudodifferential $G^\Gamma_{{\rm \Phi}}$-operator $P\in \Psi_c^0(G_{{\rm \Phi}})$ is fully elliptic if 
	the element $\sigma_{f.e.}(P)$ is invertible in $\overline{\Psi^0_c(\partial G^\Gamma_{{\rm \Phi}})}\underset{\partial X}{\times} C(\mathfrak{S}^* G^\Gamma_{{\rm \Phi}})$. We call $\sigma_{f.e.}(P)$ the full symbol of $P$.
\end{definition}

\noindent
From now on we will briefly denote the C*-algebra $\overline{\Psi^0_c(\partial G^\Gamma_{{\rm \Phi}})}\underset{\partial X}{\times} C(\mathfrak{S}^* G^\Gamma_{{\rm \Phi}})$ of  full symbols by $\Sigma$.

\smallskip
\noindent
We postpone the treatment of the K-theory classes associated to a fully elliptic ${\rm \Phi}$-operator to a later section.

\subsection{Comparing $\Psi^*_{{\rm \Phi}}(X)$ and $\Psi^*_c (G_{{\rm \Phi}} )$ and a fundamental remark}
The inclusion of $G_{{\rm \Phi}}$ into $X\times_{{\rm \Phi}} X$ given by \eqref{inclusion}, together
with the remarks made in subsection \ref{Pseudo-groupoid},
lead to the following inclusion of *-algebras
$$\Psi^*_c (G_{{\rm \Phi}})\to\Psi^*_{{\rm \Phi}} (X)$$
with $\Psi^*_c (G_{{\rm \Phi}})$  identified with the subalgebra of  $\Psi^*_{{\rm \Phi}}(X)$
made of operators with Schwartz kernel of compact support in  $\mathring{X^2_{{\rm \Phi}}}\cup \mathring{{\rm ff}}_{{\rm \Phi}}$.
Notice, in particular, that this induces a bijection at the level of differential operators.
Up to this identification, one immediately deduces that if $P$ is an operator in 
$\Psi^*_c (G_{{\rm \Phi}})$, then $\sigma_\partial (P)$ corresponds to the totality of normal operators for $P$; indeed, 
both of them are given by restriction to the front faces. See also the last section of this paper for more  on this point.
Furthermore, it also follows in an obvious way that the notions of fully ellipticity in the two contexts are compatible.
 
 \begin{remark}
As we have just observed, $\Psi^*_c (G_{{\rm \Phi}})$ is smaller than $\Psi^*_{{\rm \Phi}} (X)$ because of the support condition.
In particular  the parametrix of a fully elliptic differential operator is in $\Psi^*_{{\rm \Phi}} (X)$ but not  in 
$\Psi^*_c (G_{{\rm \Phi}})$, given that the parametrix involves the inverses of the normal operators and these are
not compactly supported.
This is usually seen as a drawback of the groupoid approach: the algebra 
$\Psi^*_c (G_{{\rm \Phi}})$ seems to be too small to be of any interest in conjunction with index theory (indeed, index
theory is based on the construction of a parametrix).
 We explain why, in the present context, this is not 
so: 

\smallskip
\noindent
we are following a  K-theoretic approach to index theory; K-theory works well for $C^*$-algebras, given that  for $C^*$-algebras
we have additional results such as Bott periodicity. Because of this, in the groupoid approach to index theory
we are really interested in  
$\overline{\Psi^0_c (G_{{\rm \Phi}})}$, a $C^*$-algebra, rather  than in  $\Psi^0_c (G_{{\rm \Phi}})$, a *-algebra.
 Now, and this is the crucial remark,
$$\overline{\Psi^0_c (G_{{\rm \Phi}})}=\overline{\Psi^*_{{\rm \Phi}} (X)}$$
which means that the difference between $\Psi^*_c (G_{{\rm \Phi}})$ and  $\Psi^*_{{\rm \Phi}} (X)$ will disappear once
we take closures.\\
Similar considerations can be given for the relationship between $\Psi^*_c (G^\Gamma_{{\rm \Phi}})$ and
 $\Psi^*_{{\rm \Phi},\Gamma,c} (X_\Gamma)$
\end{remark}

\noindent
Needless to say,  there are situations, typically involving spectral theory, where the difference
between the two calculi are indeed relevant.

\section{K-theory classes: the microlocal approach}\label{sect:k-classic}
Let  ${}^{\textrm{S}}X$ be a smoothly stratified space of dimension $n$ and let  ${}^{\textrm{S}}X_\Gamma$ be a Galois covering of it. Let $X$ and $X_\Gamma$
be the respective resolutions. 
Let $P$ be a $\Gamma$-equivariant fully elliptic $\Phi$-operator on $X_\Gamma$;
we assume that $P$ has $\Gamma$-compact support. 
 Our next goal  is to define 
\begin{itemize}
	\item a K-homology class $[P]$ in  $K^\Gamma_{n}({}^{\textrm{S}}X_\Gamma)$;
	\item an index class $\Ind (P)$ in $K_n (C^*_r(\Gamma))$.
\end{itemize}
There are two way to do it: the first one is classic and employs a parametrix, the second one employs groupoid techniques. In this section we shall briefly explain the classic, microlocal approach, in the next one we shall explain the groupoid approach, with a final subsection 
 devoted  to the compatibility between the two points of view.  The case of  $\Gamma$-equivariant Dirac operators
 will be treated in a separate section.
 
\subsection{The fundamental class of a fully elliptic operator in $K^\Gamma_* ({}^{\mathrm{S}} X_\Gamma)$}\label{subsect:fclass}
We follow \cite[Section 11]{DLR} but we perform constructions directly in the equivariant case.
Let $P\in\Psi^0_{{\rm \Phi},\Gamma,c}(X_\Gamma)$ be a $\Gamma$-equivariant fully elliptic  ${\rm \Phi}$-pseudodifferential 
operator of $\Gamma$-compact support. Let $Q\in \Psi^0_{{\rm \Phi},\Gamma,c}(X_\Gamma)$ be a parametrix for $P$.
Consider  ${\rm \bf{H}} = L^2_{g_{{\rm \Phi}}}(X_\Gamma)\oplus L^2_{g_{{\rm \Phi}}}(X_\Gamma)$ where $g_{{\rm \Phi}}$ is a $\Gamma$-equivariant $\Phi$-metric
on $X_\Gamma$. Consider the subalgebra 
 $C^\infty_{{\rm \Phi}} (X_\Gamma)\subset
 C^\infty(X_\Gamma)$ of smooth functions that are constant along the fibers of $H_{i,\Gamma}\xrightarrow{\phi_{i,\Gamma}} S_{i,\Gamma}$ 
 for each $i\in\{1,\dots,k\}$. There is a dense inclusion $C^\infty_{{\rm \Phi}} (X_\Gamma)\subset C({}^{\textrm{S}}X_\Gamma)$.
We consider the bounded operator on ${\rm \bf{H}} $ defined by multiplication by $f\in C^\infty_{{\rm \Phi}} (X_\Gamma)$, denoted ${\rm \bf{m}} (f)$. 
This is a ${\rm \Phi}$-operator of order 0 and it is obviously of $\Gamma$-compact support. We obtain in this way a 
representation  ${\rm \bf{m}}  :C({}^{\textrm{S}}X_\Gamma) \to \mathbb{B} ({\rm \bf{H}})$. Let 
$${\rm \bf{P}} = \begin{pmatrix} 0& Q\\P&0 \end{pmatrix}\,,$$
a $\Gamma$-equivariant bounded operator on ${\rm \bf{H}}$. Then, following closely the proof given in \cite[Section 11]{DLR},
one obtains  the following $\Gamma$-equivariant K-homology class
\begin{equation}\label{K-homology}
[{\rm \bf{H}} ,{\rm \bf{m}} ,{\rm \bf{P}} ] \;\;\;\in\;\;\; K^\Gamma_* ({}^{\textrm{S}}X_\Gamma)\,.
\end{equation} 
We shall denote this K-homology class simply as $[P]\in K^\Gamma_* ({}^{\textrm{S}}X_\Gamma)$.

\subsection{The index class in $K_* (C^*_r \pi_1 ({}^{\mathrm{S}} X))$}
Let $P$ and $Q$ be as in the previous subsection. 
We first assume that ${}^{\textrm{S}}X$ is even dimensional. We can define in the usual way 
a $C^*_r \Gamma$-Hilbert module $\mathcal{E}$, see for example \cite{BCH-proper}.
Let $R, S\in \dot{\Psi}_{\Phi,\Gamma,c}^{-\infty} (X_\Gamma)$ be the remainders given by 
Proposition \ref{prop:parametrix-Gamma}. They certainly define elements in $\mathbb{K}(\mathcal{E})$.
We consider the following idempotents in the unitalization of $\mathbb{K}(\mathcal{E})$:
\begin{equation}\label{CS-projector}
p:= \begin{pmatrix} R^2 & R (I+R)Q \\ SP &
I-S^2 
\end{pmatrix}\,,\qquad p_0:= \begin{pmatrix} 0 & 0
\\ 0 & I 
\end{pmatrix} 
\end{equation}
and we set, by definition,
\begin{equation}\label{CS}
\Ind_\Gamma (P):= [p] - [p_0]
\end{equation}
The class in \eqref{CS} is an element  in $K_0 (\mathbb{K}(\mathcal{E}))\simeq K_0 (C^*_r \Gamma)$.
See for example \cite{CM} for the motivation behind this definition.

Classic arguments, see for example  \cite{BCH-proper}, \cite{blackadar}, \cite{WO},
 show that our index class in  $K_0 (C^*_\Gamma)$ 
 is the image of the K-homology class $[P]$, defined by $P$,  through the composition of the assembly
map $\mu_\Gamma: K^\Gamma_0 ({}^{\textrm{S}}X_\Gamma)\to KK (\bbC,C^*_r\Gamma)$
with the  isomorphism $KK (\bbC,C^*_r\Gamma)\simeq K_0 (C^*_r\Gamma)$. With a small abuse of notation
we write
\begin{equation}\label{assembly-index}
\mu_\Gamma [P]=\Ind_\Gamma (P)\,.
\end{equation}
 The odd case can be treated by similar methods.

\section{K-theory classes: the groupoid approach}\label{sect:k-adiabatic}

\subsection{The adiabatic groupoid: an introduction} 
Let $X$ be a closed smooth manifold.
Consider the pair groupoid $X\times X\rightrightarrows X$. Its smooth convolution algebra $C_c^\infty(X\times X, \Omega^{\frac{1}{2}}(\ker dr\oplus\ker ds))$ of  smooth compactly supported half-densities on $X\times X$ is nothing but the *-algebra of the smoothing operators on $L^2(X,\Omega^{\frac{1}{2}})$ 
and $C^*_r(X\times X)$, its reduced C*-algebra, is thus isomorphic to the algebra of compact operators $\bbK(L^2(X,\Omega^{\frac{1}{2}}))$.

The Lie algebroid of $X\times X$ is given by the tangent bundle $TX$: it is a Lie groupoid and, by means of the Fourier transform,  its 
C*-algebra $C^*_r(TX)$ is isomorphic to $C_0(T^*X)$ (notice that  0th order symbols on $X$ are bounded multipliers of this algebra).
By Poincar\'e duality, see \cite{CS}, we know that  $K_*(C_0(T^*X))$ is isomorphic to $KK^*(C(X),\mathbb{C})$, the K-homology of $X$.

Following  Alain Connes we now consider a new groupoid,  
the tangent groupoid of the smooth manifold $X$. As a set it is given by 
\[
(X\times X)^{[0,1]}_{ad}:=TX\times\{0\}\sqcup X\times X\times(0,1]\rightrightarrows X\times[0,1],
\]
This set can be equipped with a suitable smooth structure that we shall now describe. 
On $TX\times\{0\}$ and $X\times X\times(0,1]$ we have the usual topology; the gluing of this two pieces is described in terms of convergence in the following way:
we say that $(x_n, y_n, t_n)\in X\times X\times(0,1]$ converges to $(x,\xi, 0)\in TX\times\{0\}$ if $t_n$ goes to $0$, $x_n, y_n$ tends to $x$ and the tangent vector
$(x_n,(y_n-x_n)/t_n)$ converges to $(x,\xi)$, see \cite{Co}. In Section \ref{sing-fol} we will describe this smooth structure in an alternative way for more general situations, see Example \ref{DNC}.

The evaluation at $0$ defines a $C^*$-algebra homomorphism  $\mathrm{ev}_0\colon C^*_r((X\times X)^{[0,1]}_{ad})\to C^*_r(TX)$. 
which induces a short exact sequence of $C^*$-algebras: 
\begin{equation}
\label{eq4intro-0}\xymatrix{0\ar[r]& C^*_r\left(X\times X\right)\otimes C_0(0,1)\ar[r]& C^*_r\left((X \times X)_{ad}^{[0,1)} \right)\ar[r]& C^*_r(TX)\ar[r]& 0}.
\end{equation}
Consider the long exact sequence in K-theory:
$$\xymatrix{\cdots\ar[r]& K_*\left( C^*_r\left(X\times X\right)\otimes C_0(0,1)\right)\ar[r]& K_*(C^*_r((X \times X)_{ad}^{[0,1)} ))\ar[r]& K_*(C^*_r(TX))\ar[r]^(.7){\delta_{ad}}& \cdots}$$
As already explained in the introduction, we define the adiabatic index homomorphism $\Ind^{\mathrm{ad}}$ as the composition of $\delta_{ad}$ and the Bott isomorphism
$\beta$:
$$ \Ind^{ad}=\beta \circ \delta_{ad}: K_*(C^*_r(TX)) \to K_*\left( C^*_r\left(X\times X\right)\right)=K_* (\bbK) $$
 (needless to say, $K_1 (\bbK)=0$).\\
We have the following important result, see  \cite{Co} \cite{MP} for a proof:
\begin{proposition}\label{compatibility-numeric}
Under the identification of  $K_0(C^*_r(TX)) $ with $K^0 (T^* X)$ and of $K_0 (\bbK)$ with $\bbZ$, the adiabatic index homomorphism
is equal to the Atiyah-Singer analytic index homomorphism.
\end{proposition}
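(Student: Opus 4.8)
The plan is to reinterpret $\Ind^{\mathrm{ad}}$ as a \emph{deformation index} attached to the full tangent groupoid over $[0,1]$, and then to identify this deformation index with the classical Fredholm index by quantizing symbols inside the tangent groupoid calculus. First I would pass from the half-open interval to the closed one. Consider $A:=C^*_r((X\times X)^{[0,1]}_{ad})$ together with the two evaluations $\mathrm{ev}_0\colon A\to C^*_r(TX)$ and $\mathrm{ev}_1\colon A\to C^*_r(X\times X)\cong\bbK$. The kernel of $\mathrm{ev}_0$ is $C^*_r(X\times X)\otimes C_0(0,1]\cong\bbK\otimes C_0(0,1]$, a cone, hence contractible; therefore $(\mathrm{ev}_0)_*\colon K_*(A)\to K_*(C^*_r(TX))$ is an isomorphism. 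One may thus define the deformation index
\begin{equation*}
\Ind_{\mathrm{def}}:=(\mathrm{ev}_1)_*\circ(\mathrm{ev}_0)_*^{-1}\colon K_*(C^*_r(TX))\to K_*(\bbK)=\bbZ.
\end{equation*}
A routine six-term diagram chase, comparing the exact sequence defining $\delta_{ad}$ (evaluation at $0$ on the $[0,1)$-groupoid) with the restriction $A\to C^*_r((X\times X)^{[0,1)}_{ad})$, shows that $\Ind_{\mathrm{def}}=\beta\circ\delta_{ad}=\Ind^{\mathrm{ad}}$; this step is standard homological algebra for deformation $C^*$-algebras and I would only sketch it.

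Next I would compute $\Ind_{\mathrm{def}}$ following Connes. Let $P$ be an order-zero elliptic pseudodifferential operator on $X$ with symbol class $a=[\sigma(P)]\in K_0(C^*_r(TX))=K^0(T^*X)$; the analytic index homomorphism sends $a$ to the Fredholm index $\dim\ker P-\dim\operatorname{coker}P$. Using the quantization map for the groupoid $(X\times X)^{[0,1]}_{ad}$ recalled in Section~\ref{subsect:pseudo-G}, one produces an order-zero elliptic operator $\mathbf{P}\in\overline{\Psi^0_c((X\times X)^{[0,1]}_{ad})}$ whose value at $t=0$ is the $TX$-family obtained by Fourier-quantizing $\sigma(P)$ fibrewise, and whose value at every $t>0$ is a rescaled copy of $P$ itself. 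Its parametrix, built exactly as in the construction of Section~\ref{subsect:k-classes-general}, produces an index class $\Ind(\mathbf{P})\in K_0(A)$ with $(\mathrm{ev}_0)_*\Ind(\mathbf{P})=a$, since $\mathrm{ev}_0(\mathbf{P})$ has principal symbol $a$ under the Fourier isomorphism $C^*_r(TX)\cong C_0(T^*X)$. Because $C^*_r(X\times X)\cong\bbK(L^2(X))$, the class $(\mathrm{ev}_1)_*\Ind(\mathbf{P})\in K_0(\bbK)=\bbZ$ is precisely the usual Fredholm index of $\mathrm{ev}_1(\mathbf{P})=P$. Hence $\Ind_{\mathrm{def}}(a)=\dim\ker P-\dim\operatorname{coker}P$, which is the Atiyah--Singer analytic index of $a$.

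The main obstacle is the construction and the two endpoint identifications in the previous paragraph: one must verify that the quantized family $\mathbf{P}$ is a genuine elliptic element of $\overline{\Psi^0_c((X\times X)^{[0,1]}_{ad})}$, that its symbol is the constant family $\sigma(P)$ so that $\mathrm{ev}_0$ recovers $a$, and that the groupoid index class restricts at $t=1$ to the classical Fredholm index in $K_0(\bbK)$. This is exactly where the rescaled smooth structure of the tangent groupoid enters, namely the convergence of $(x_n,(y_n-x_n)/t_n)$ to $(x,\xi)$: it is what interpolates between multiplication by the symbol on $T_xX$ and the honest operator $P$ on $X$, and makes the whole family smooth across $t=0$. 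Since, by Atiyah--Singer, every class in $K_0(C^*_r(TX))=K^0(T^*X)$ is the symbol class of such an elliptic operator and both homomorphisms are additive, their agreement on all of $K_0$ follows, establishing the proposition.
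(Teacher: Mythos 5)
Your proposal is correct and follows essentially the same route as the paper: the paper defers the proof of this proposition to \cite{Co} and \cite{MP}, but the mechanism it then sketches in the subsection ``The adiabatic groupoid: beyond an introduction''---the KK-equivalence $[\mathrm{ev}_0]$ coming from the K-contractible cone kernel, the quantization of $\sigma$ to an adiabatic operator $P_{ad}$ restricting to the Fourier transform of $\sigma$ at $t=0$ and to $P$ at $t=1$, and the factorization $\Ind^{ad}=[\mathrm{ev}_0]^{-1}\otimes[\mathrm{ev}_1]$ of \eqref{factorization-ind-ad}---is precisely your argument. The one step you only sketch (the six-term comparison identifying $\beta\circ\delta_{ad}$ with $(\mathrm{ev}_1)_*\circ(\mathrm{ev}_0)_*^{-1}$) is likewise asserted without proof in the paper, so your level of detail matches the source.
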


\subsection{The adiabatic groupoid: beyond an introduction}
 Both for explaining the compatibility result and for later use in our treatment of rho classes, we are now going 
 to treat more in detail the adiabatic connecting homomorphism.

\noindent 
One can prove the following:
\begin{itemize}
	\item  let us consider the $C^*$-algebra homomorphism  $\mathrm{ev}_0\colon C^*_r((X\times X)^{[0,1]}_{ad})\to C^*_r(TX)$
	induced by evaluation at 0. Since the kernel of this homomorphism is a cone, which is  K-contractible, the element $[\mathrm{ev}_0]$ of $ KK(C^*_r((X\times X)^{[0,1]}_{ad}), C^*_r(TX))$ is a KK-equivalence, namely there exists an inverse element $[\mathrm{ev}_0]^{-1}\in KK(C^*_r(TX), C^*_r((X\times X)^{[0,1]}_{ad}))$. 
	\item 
	If $\sigma$ is the symbol of an elliptic $0$-order pseudodifferential operator $P$ on $X$, then we can describe the image of its class through $[\mathrm{ev}_0]^{-1}$ in the following way:\\ take the pull-back of $\sigma$   to $T^*X\times[0,1]$, the dual Lie algebroid of $(X\times X)_{ad}^{[0,1]}$; this is the symbol $\sigma\times id_{[0,1]}$; it produces an elliptic pseudodifferential operator $P_{ad}$ on $(X\times X)_{ad}^{[0,1]}$, whose restriction at $1$ is the pseudodifferential operator $P$ and whose restriction at $0$ is the Fourier transform of $\sigma$; thus we 
	see that  $[\sigma]\otimes_{C^*_r(TX)}[\mathrm{ev}_0]^{-1}=[P_{ad}]$, where on the right hand side we have the class associated to the elliptic operator $P_{ad}$, as explained in subsection \ref{subsect:k-classes-general}; 
	\item if we denote by $\mathrm{ev}_1\colon C^*_r((X\times X)_{ad}^{[0,1]})\to C^*_r(X\times X)$  the evaluation at 1, then the  KK-element \begin{equation}\label{indexmap}[\mathrm{ev}_0]^{-1}\otimes_{C^*_r((X\times X)^{[0,1]}_{ad})}[\mathrm{ev}_1]\in KK(C^*_r(TX),C^*_r(X\times X))\end{equation}
	is, up to the Bott isomorphism, equal to the boundary morphism $\delta_{ad}$ of \eqref{ad-e-s}; it follows that 
	$\Ind^{ad}: K_* (C^*_r(TX))\to K_*(C^*_r (X\times X))=K_* (\mathbb{K})$ is given by 
	\begin{equation}\label{factorization-ind-ad}
	\Ind^{ad}=[\mathrm{ev}_0]^{-1}\otimes[\mathrm{ev}_1].
	\end{equation}
\end{itemize}


\begin{remark}
	
Similarly if we have a Galois $\Gamma$-covering $X_\Gamma \to X$, we can consider the  Lie groupoid $X_\Gamma\times_\Gamma X_\Gamma\rightrightarrows X$ and its adiabatic deformation 
\[
TX\times\{0\}\sqcup X_\Gamma \times_\Gamma X_\Gamma\times(0,1]\rightrightarrows X\times[0,1].
\]
Using the same arguments as above, taking the $\Gamma$-equivariant pseudodifferential operator associated to an equivariant symbol $\sigma$, we can define the adiabatic index class in  $K_*(C^*_r(X_\Gamma\times_\Gamma X_\Gamma))\cong K_*(C^*_r(\Gamma))$, using the boundary morphism associated to the following exact sequence
\begin{equation}
\label{eq4intro}\xymatrix{0\ar[r]& C^*_r\left(X_\Gamma\times_\Gamma X_\Gamma\right)\otimes C_0(0,1)\ar[r]& C^*_r((X_\Gamma\times_\Gamma X_\Gamma)_{ad}^{[0,1)})\ar[r]& C^*_r(TX)\ar[r]& 0}.
\end{equation}
Moreover this class is proved to be equal to the index class we defined using an equivariant parametrix. 
In the following we will adapt this construction to the context of fully elliptic operators on singular manifolds.
\end{remark}
\begin{remark}

This construction extends to any Lie groupoid $G$. We refer the reader to Section \ref{sing-fol} for 
the details.
\end{remark}

\subsection{The noncommutative tangent bundle and the noncommutative symbol}\label{subsect:nctb-ncs}
Let us now move to the case of a singular manifold ${}^{\textrm{S}}X$. Let $X$ be its resolution.
Following the general philosophy we have explained in the closed case, we are led to consider the adiabatic deformation of the  Lie groupoid $G^\Gamma_{{\rm \Phi}}\rightrightarrows X$. 

\medskip
\noindent
{\it From now on we denote the Lie groupoid $G^\Gamma_{{\rm \Phi}}\rightrightarrows X$ simply by 
$G\rightrightarrows X$.}

\medskip
\noindent
We recall that the  Lie algebroid  of $G\rightrightarrows X$ is ${}^{{\rm \Phi}} TX$ and that the Fourier transform of 
the symbol of a $\Gamma$-equivariant 0th-order ${\rm \Phi}$-pseudodifferential operator of $\Gamma$-compact
support defines a class in $K_*(C^*_r({}^{{\rm \Phi}} TX))\simeq K_* (C_0 ({}^{{\rm \Phi}} T^*X))$.
In contrast with the closed case 
this K-group is not isomorphic to the K-homology of ${}^{\textrm{S}}X$. Indeed Debord, Lescure and Rochon \cite{DLR}
building on work of Debord and Lescure \cite{DL-poincare},  proved that there is another Lie groupoid $T^{NC}X\rightrightarrows X\times\{0\}\cup \partial X\times (0,1)$, built up from the adiabatic deformation of $G$, such that the K-theory of its C*-algebra is isomorphic to the K-homology group $K_*({}^{\textrm{S}}X)$.

The  idea is that the right objects to consider are not  elliptic symbols but elliptic symbols together with the  non-commutative symbols, i.e. the normal operators.

\begin{definition}
We define $T^{NC}X$, the non-commutative tangent bundle of $X$, in the following way: 
first  we take the adiabatic deformation of $G$
\[
G^{[0,1]}_{ad}:={}^{{\rm \Phi}} TX\times\{0\}\cup G\times(0,1]\rightrightarrows X\times[0,1], 
\]
then we restrict to $\mathring{X}\times \{0\}\cup \partial X\times [0,1)$.
\end{definition}

We obtain the disjoint union of $T(X\setminus\partial X)$, the tangent bundle of the interior of $X$,  and of the adiabatic deformation of 
the restriction of $G$ to $\partial X$, $\pa G$, open at 1:
\begin{equation}\label{NC}
T\mathring{X}\cup (\partial G)^{[0,1)}_{ad}\rightrightarrows\mathring{X}\times \{0\}\cup \partial X\times [0,1)\,.
\end{equation}
We shall see that this last piece of information, i.e. the fact that the deformation is open at 1,  is directly linked to the full ellipticity of the operator.
 The C*-algebra of the groupoid $T^{NC}X$ fits into the following 
exact sequence, obtained by restriction:
\begin{equation}
\label{eq4intro}\xymatrix{0\ar[r]& C^*_r\left(\mathring{X}_\Gamma \times_\Gamma \mathring{X}_\Gamma \right)\otimes C_0(0,1)\ar[r]& C^*_r\left(G_{ad}^{[0,1)}\right)\ar[r]& C^*_r(T^{NC}X)\ar[r]& 0}.
\end{equation}

Let $\sigma$ be the symbol of a fully elliptic pseudodifferential  $G$-operator $P$ 
of order $0$. As we saw in the previous subsection for the closed case, it defines a pseudodifferential $G^{[0,1]}_{ad}$-operator $P_{ad}$ that 
restricts to the Fourier transform of $\sigma$ at 0 and to $P$ at 1.

Recall from Section \ref{Pseudo-groupoid} that a $G$-operator, with $G=G_{{\rm \Phi}}^\Gamma$, is a family of operators parametrized by $X$: the restriction of $P$ to the interior $X\setminus\partial X$ corresponds to a $\Gamma$-equivariant pseudodifferential $\Phi$-operator $P$ 
of $\Gamma$-compact support whereas  the restriction of $P$ to $\partial X$ corresponds to the collection
of all normal families of $P$. 

It is crucial to observe that $P_{ad}$, the adiabatic operator associated to a fully elliptic operator,
 not only defines a class in $KK^*(\bbC,C^*_r(G^{[0,1]}_{ad}))$ 
but also a class in $KK^*(\bbC,C^*_r(G^{[0,1],F}_{ad}))$, where  $ C^*_r(G^{[0,1],F}_{ad})$ is the restriction of 
$G^{[0,1]}_{ad}$ to $X\times[0,1]\setminus \partial X\times\{1\}$ \footnote{here the superscript $F$ stands for {\it Fredholm}}.
Indeed, using the discussion given at the end of Subsection \ref{subsect:k-classes-general}, this is true precisely because $P$ is fully elliptic: its restriction to $\partial X\times \{1\}$,
which is given by the normal families of $P$, is invertible and we can find a parametrix $\mathcal{Q}$ of $P$
whose normal operators are the inverses of the normal operators of $P$.
Summarizing, from a fully elliptic operator $P\in \Psi^0_c (G)$ we have defined a class $[P_{ad}^F]$
in  $KK^*(\bbC, C^*_r(G^{[0,1],F}_{ad}))$.
We now observe that there is KK-equivalence between $C^*_r(G^{[0,1],F}_{ad})$ and $C^*_r (T^{NC}X)$
induced by  the restriction of $G^{[0,1],F}_{ad}$ to $T^{NC}X$ (indeed, the kernel of the restriction
homomorphism is $C^*_r (\mathring{X}_\Gamma\times_\Gamma \mathring{X}_\Gamma\times (0,1])\cong C^*_r (\mathring{X}_\Gamma\times_\Gamma \mathring{X}_\Gamma)\otimes C_0((0,1])$ which
is K-contractible).\\
Summarizing, the following definition is well-posed:
\begin{definition}\label{sigmanc}
Let $P$ be a fully elliptic operator with symbol $\sigma$.
The restriction of the operator $P^F_{ad}$ to $\mathring{X}\times \{0\}\cup \partial X\times [0,1)$ defines a class
\begin{equation*}
[\sigma_{nc} (P)]\in K_*(C^*_r(T^{NC}X)).
\end{equation*}
We call this class the noncommutative symbol of  $P$.\\
\end{definition}

Generalizing the discussion in the previous subsection, see in particular \eqref{factorization-ind-ad}, we 
now define the adiabatic index  of $P$ as the image of $[\sigma_{nc} (P)]$ through the boundary morphism associated to \eqref{eq4intro}:

\begin{definition}
	The adiabatic index of a non-commutative symbol $[\sigma_{nc} (P)]\in K_*(C^*_r(T^{NC}X))$ is defined as its image through the following composition
	\begin{equation}\label{index-homo-nc}
	\xymatrix{K_*(C^*_r(T^{NC}X))\ar[r]^(.5){[\mathrm{ev}_0]^{-1}}& K_*(  C^*_r(G^{[0,1],F}_{ad}))  )\ar[r]^(.45){[\mathrm{ev}_1]}& K_*(C^*_r(\mathring{X}_\Gamma\times_\Gamma\mathring{X}_\Gamma))}.
	\end{equation}
Thus
\begin{equation}\label{index-homo-nc-bis}
\Ind^{{\rm ad}}:= [\mathrm{ev}_1] \circ [\mathrm{ev}_0]^{-1}
\end{equation}
We set
\begin{equation}\label{index-homo-nc-ter}
 \Ind^{{\rm ad}} (P):= [\mathrm{ev}_1] \circ [\mathrm{ev}_0]^{-1} ([\sigma_{nc} (P)])\in K_*(C^*_r(\mathring{X}_\Gamma\times_\Gamma\mathring{X}_\Gamma))\simeq K_* (C^*_r \Gamma).
 \end{equation}

\end{definition}

\subsection{Rho classes and delocalized APS index theorem}

Let us now assume that  there is a homotopy $\{P_t\}$
from $P_0=P$ to an invertible operator $P_1$, so that the adiabatic index class of $P$ in $K_* (C^*_r \Gamma)$  is zero. Thus the concatenation of $P_{ad}$ and $P_t$, after a suitable reparametrization, defines a 
$G^{[0,1]}_{ad}$-operator such that its restriction to $X\times\{1\}$ defines a degenerate Kasparov cycle.
Again, using the discussion given at the end of Subsection \ref{subsect:k-classes-general}, it follows that this operator actually defines a class in $KK(\bbC,C^*_r(G^{[0,1)}_{ad}))$. 
\begin{definition}\label{rho-definition}
We define $\rho(P,\{P_t\})$ as the class in $KK(\bbC, C^*_r(G^{[0,1)}_{ad}) )$ that we have just constructed. It depends of course on $P$ but also  on the homotopy $\{P_t\}$.
If $P$ itself is invertible then we omit the constant path in the notation and  write simply $\rho(P)$. 
\end{definition}
Notice that this definition is a particular case of a general definition for Lie groupoids given in \cite{Zenobi:Ad}. It is proved in that paper
that these rho-classes are linked to Atiyah-Patodi-Singer index classes on groupoids
with boundary, a result that generalizes to the groupoid context the delocalized APS index theorem for Galois coverings
proved in \cite{PS-Stolz}.
In this article we shall only consider the following  special case of this very general  version of the delocalized APS index theorem.\\

Consider a stratified pseudomanifold ${}^{\textrm{S}}X$ and the associated cylinder ${}^{\textrm{S}}X\times [0,1]$.
Let ${}^{\textrm{S}}X_\Gamma$ be its universal covering.
Let $X_\Gamma$ denote, as usual, the resolved manifold associated to
${}^{\textrm{S}}X_\Gamma$. Let $G$ be the groupoid $G^\Gamma_{{\rm \Phi}}$.
We consider the Lie groupoid $\mathcal{G}$ given, by definition, by $G\times b[0,1]\rightrightarrows X\times [0,1]$, that is, the product of 
$G$ with the $b$-groupoid of $[0,1]$, see \cite{monthubert-pams}.  There is a natural notion of full ellipticity for $\mathcal{G}$.
Now, let us consider $\mathcal{G}_{ad}^F$, defined as the restriction of $\mathcal{G}_{ad}$ to $X\times[0,1]\times[0,1]\setminus\partial(X\times[0,1])\times\{1\}$. 
A fully elliptic  operator $A$ on $X\times[0,1]$ defines, as before, a class $[A_{ad}^F]\in KK^*(\bbC,C^*_r(\mathcal{G}_{ad}^F))$.
If we restrict the groupoid to the boundary of the cylinder, we obtain the groupoid
$G^{[0,1)}_{ad}\times\{0,1\}\times\bbR\rightrightarrows X\times\{0,1\}$.

We assume that the normal operators associated to the boundary hypersufaces $X\times\{i\}$ of the cylinder
are of the form $A_i+\partial$ where $A_i\in \Psi^0_c(G)$ is invertible for $i=1,2$ and $\partial$ is a translation invariant operator on $\bbR$ that generates $KK^1(\bbC,C_0(\bbR))$.

 Since the restriction of the groupoid $X\times \{0,1\}$ is equal to the product of the two groupoids $G^{[0,1)}_{ad}\times \{0,1\}$ and $\RR$, it is easy to see that the image of $[A_{ad}^F]$ through the restriction $ev_{\{0,1\}}$ to $X\times \{0,1\}$ is the exterior Kasparov product of $\rho(A_0)\oplus -\rho(A_1)\in KK^{*-1}(\bbC, C^*_r( G^{[0,1)}_{ad})\oplus C^*_r(G^{[0,1)}_{ad}))$ with $[\partial]$. 
Instead, if we evaluate the class  $[A_{ad}^F]$ at the adiabatic deformation parameter $t=1$, we obtain the class 
$\operatorname{Ind}^{{\rm ad}}(A)$ in $ KK^*(\bbC,C^*_r(\mathring{X}_\Gamma\times_\Gamma \mathring{X}_\Gamma\times (0,1)\times(0,1))$, 
which we identify with to $KK^*(\bbC,C^*_r(\mathring{X}_\Gamma\times_\Gamma \mathring{X}_\Gamma))$
through Bott isomorphism. \\
Consider the two following homomorphisms:
\begin{itemize}
\item $\alpha\colon KK^*(\bbC, C^*_r( G^{[0,1)}_{ad})\oplus C^*_r( G^{[0,1)}_{ad}))\otimes C_0(\bbR))\to KK^{*+1}(\bbC,  C^*_r( G^{[0,1)}_{ad})) $, which is the composition of the sum $a\oplus b\mapsto a+b$ and the Bott periodicity isomorphism;
\item $\beta\colon KK^*(\bbC,C^*_r(\mathring{X}_\Gamma\times_\Gamma \mathring{X}_\Gamma))\to KK^{*+1}(\bbC,  C^*_r( G^{[0,1)}_{ad}) ) $, which is the composition of  Bott periodicity with the obvious injection $\iota\colon C^*_r(\mathring{X}_\Gamma\times_\Gamma \mathring{X}_\Gamma\times(0,1))\to  
C^*_r( G^{[0,1)}_{ad})$.
\end{itemize}
The following result is a special case of \cite[Theorem 3.6]{Zenobi:Ad}

\begin{theorem}\label{deloc-aps}
If $A$ is as above, then $\alpha (\mathrm{ev}_{\{0,1\}}[A_{ad}^F])=\iota_*\circ\beta(ev_1[A_{ad}^F])$. 
Put it differently,
the following delocalized APS index formula holds:
\[
\rho(A_0)-\rho(A_1)= \iota_*\circ\beta(\operatorname{Ind}^{ad}(A))\in KK^{*+1}(\bbC,  C^*_r( G^{[0,1)}_{ad}) ). 
\]
In particular if $\operatorname{Ind}^{ad}(A)$ vanishes,  then $\rho(A_0)=\rho(A_1)$.
\end{theorem}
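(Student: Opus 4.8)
The plan is to realize this statement as the specialization, to the product groupoid $\mathcal{G}=G\times b[0,1]$, of the general delocalized APS theorem for Lie groupoids \cite[Theorem 3.6]{Zenobi:Ad}; the whole argument turns on the fact that a \emph{single} K-theory class, namely $[A_{ad}^F]\in KK^*(\bbC,C^*_r(\mathcal{G}_{ad}^F))$, restricts in two complementary ways along the two deformation directions of $\mathcal{G}_{ad}^F$ (the cylinder direction and the adiabatic direction), and that these two restrictions are forced to coincide, up to the bookkeeping isomorphisms $\alpha$ and $\beta$, by the functoriality of the construction. First I would fix the double-deformation groupoid $\mathcal{G}_{ad}^F$ over the square minus its bad corner $\partial(X\times[0,1])\times\{1\}$, recall that full ellipticity of $A$ — with the Fredholm condition encoded precisely by removing that corner — produces the well-defined class $[A_{ad}^F]$ exactly as in Definition \ref{sigmanc} and the discussion preceding it, and record the two evaluation homomorphisms $\mathrm{ev}_{\{0,1\}}$ (restriction to the cylinder boundary) and $\mathrm{ev}_1$ (evaluation at the adiabatic endpoint).

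The core of the proof is the explicit computation of these two evaluations. For $\mathrm{ev}_1$ this is essentially definitional: as explained in the paragraph preceding the theorem, evaluating $[A_{ad}^F]$ at the adiabatic parameter $1$ returns $\operatorname{Ind}^{\mathrm{ad}}(A)$ (cf. \eqref{index-homo-nc-ter}), living after the Bott identification in $KK^*(\bbC,C^*_r(\mathring{X}_\Gamma\times_\Gamma\mathring{X}_\Gamma))$, and $\beta$ carries this into $KK^{*+1}(\bbC,C^*_r(G^{[0,1)}_{ad}))$. For $\mathrm{ev}_{\{0,1\}}$ I would use the product decomposition $\mathcal{G}_{ad}^F|_{X\times\{0,1\}}=G^{[0,1)}_{ad}\times\{0,1\}\times\bbR$ together with the standing hypothesis that the normal operators at the two cylinder faces are of the APS form $A_i+\partial$ with $A_i\in\Psi^0_c(G)$ invertible. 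Invertibility of $A_i$ makes $\rho(A_i)\in KK(\bbC,C^*_r(G^{[0,1)}_{ad}))$ well-defined by Definition \ref{rho-definition}, and the additive splitting $A_i+\partial$ across the $G$- and $\bbR$-directions exhibits the restricted class as the exterior Kasparov product $(\rho(A_0)\oplus-\rho(A_1))\times[\partial]$, with $[\partial]$ the generator of $KK^1(\bbC,C_0(\bbR))$ and the sign on $\rho(A_1)$ coming from the opposite orientations of the two boundary faces. Applying $\alpha$, which sums the two summands and cancels $[\partial]$ against Bott periodicity, then yields $\rho(A_0)-\rho(A_1)$.

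With both evaluations identified, the identity $\alpha(\mathrm{ev}_{\{0,1\}}[A_{ad}^F])=\iota_*\circ\beta(\mathrm{ev}_1[A_{ad}^F])$ is precisely the conclusion of \cite[Theorem 3.6]{Zenobi:Ad} applied to $\mathcal{G}$; conceptually it expresses that the two legs of the commuting square of restriction-and-boundary maps emanating from $[A_{ad}^F]$ agree, i.e. that passing to the adiabatic endpoint and comparing through the connecting map yields the same class as passing to the cylinder boundary. The main obstacle — and the step I would write out in most detail — is the second identification above: verifying that the boundary-restricted groupoid genuinely factors as $G^{[0,1)}_{ad}\times\{0,1\}\times\bbR$ and that the adiabatic deformation of the APS normal operator $A_i+\partial$ decouples into the rho class of the invertible $A_i$ times the generator $[\partial]$, rather than into some entangled class on the product. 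Secondary points requiring care are the correct matching of the Bott and orientation signs that produce $-\rho(A_1)$, and checking that the Fredholm condition (openness at the adiabatic value $1$ over the cylinder boundary) coincides exactly with full ellipticity of $A$, so that $[A_{ad}^F]$ is defined on $\mathcal{G}_{ad}^F$ in the first place.
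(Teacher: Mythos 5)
Your proposal is correct and follows essentially the same route as the paper: the paper likewise identifies $\mathrm{ev}_{\{0,1\}}[A_{ad}^F]$ with the exterior Kasparov product $(\rho(A_0)\oplus-\rho(A_1))\times[\partial]$ via the product decomposition $G^{[0,1)}_{ad}\times\{0,1\}\times\bbR$ of the boundary-restricted groupoid, identifies $\mathrm{ev}_1[A_{ad}^F]$ with $\operatorname{Ind}^{\mathrm{ad}}(A)$, and then obtains the formula as a special case of the general delocalized APS index theorem for Lie groupoids, \cite[Theorem 3.6]{Zenobi:Ad}, applied to $\mathcal{G}=G\times b[0,1]$. The details you flag for elaboration (the decoupling of $A_i+\partial$ into $\rho(A_i)$ times $[\partial]$, and the matching of the Fredholm condition with full ellipticity) are exactly the points the paper treats in the discussion preceding the theorem statement.
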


\section{Compatibility of K-theory classes: results}\label{sect:compatibility-facts}

The following result is discussed in the work of Debord, Lescure and Rochon  \cite{DLR}:

\begin{theorem}[Poincar\'{e} duality]
	The K-theory of $C^*_r(T^{NC}X)$ is isomorphic to the equivariant K-homology of the stratified manifold ${}^{\textrm{S}}X_\Gamma$.
	Moreover, under this isomorphism, the equivariant K-homology class  $[P]\in K^\Gamma_* ({}^{\textrm{S}}X_\Gamma)$ of a fully elliptic operator $P$ in 
	$\Psi^0_c (G^\Gamma_{{\rm \Phi}})$ 
	corresponds to	the class 
	$[\sigma_{nc}(P)]\in K_*(C^*_r(T^{NC}X))$ defined above.
	\end{theorem}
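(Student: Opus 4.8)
The plan is to recognize the claimed isomorphism as a Kasparov Poincar\'e duality between the groupoid algebra $A:=C^*_r(T^{NC}X)$ and the commutative algebra $B:=C({}^{\textrm{S}}X_\Gamma)$, following the scheme of Connes--Skandalis \cite{CS} in the closed case and its extension to stratified spaces by Debord--Lescure \cite{DL-poincare}, carried out $\Gamma$-equivariantly. Concretely, I would produce a fundamental (``Dirac'') class $\mathcal{D}\in KK^\Gamma(A\otimes B,\bbC)$ together with a dual (``Bott'') class $\mathcal{B}\in KK^\Gamma(\bbC,A\otimes B)$, and verify the two duality equations
\[
\mathcal{B}\otimes_A\mathcal{D}=1_B\in KK^\Gamma(B,B),\qquad \mathcal{B}\otimes_B\mathcal{D}=1_A\in KK(A,A),
\]
up to the usual sign and grading conventions. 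Granting these, the map $PD\colon x\mapsto x\otimes_A\mathcal{D}$ carries $K_*(A)=KK(\bbC,A)$ isomorphically onto $KK^\Gamma(B,\bbC)=K^\Gamma_*({}^{\textrm{S}}X_\Gamma)$, with inverse $y\mapsto \mathcal{B}\otimes_B y$; this is exactly the first assertion.

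The geometric engine for $\mathcal{D}$ is the tangent/adiabatic groupoid itself: over the regular part $\mathring{X}$ the deformation to the normal cone reproduces verbatim the Connes--Skandalis quantization pairing, so that on $C^*_r(T\mathring{X})\cong C_0(T^*\mathring{X})$ the duality equations hold by the classical computation. The substantive new content is the contribution of the singular stratum, where the non-commutative part $(\partial G)^{[0,1)}_{ad}$ of $T^{NC}X$ enters and one must match the normal-operator data correctly; this is precisely the point at which Debord--Lescure's construction of the fundamental class of a stratified space is needed, and I expect it to be the main obstacle. The $\Gamma$-equivariant refinement is then essentially formal: because the covering $X_\Gamma\to X$ is free, proper and respects the boundary fibrations, cf.\ \eqref{compatible-bfs-2}, the deformation groupoid and all the KK-elements above are natural with respect to the $\Gamma$-action, so the non-equivariant duality of \cite{DL-poincare}, \cite{DLR} promotes to a $\Gamma$-equivariant one.

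For the second assertion I would trace the definitions through $PD$. By Definition~\ref{sigmanc}, $[\sigma_{nc}(P)]$ is the class of the restriction of $P^F_{ad}$ to $\mathring{X}\times\{0\}\cup\partial X\times[0,1)$, i.e.\ the data of $P$ recorded at the $t=0$ end of the adiabatic deformation, namely its principal symbol together with all of its normal operators. Since the fundamental class $\mathcal{D}$ is built from exactly this deformation, computing $PD([\sigma_{nc}(P)])=[\sigma_{nc}(P)]\otimes_A\mathcal{D}$ amounts to transporting the symbol-plus-normal-operator data along the deformation to the $t=1$ end, where $P^F_{ad}$ restricts to $P$ acting on $L^2_{g_{{\rm \Phi}}}(X_\Gamma)$. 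Pairing the resulting Fredholm module with the multiplication representation $\mathbf{m}$ of $C^\infty_{{\rm \Phi}}(X_\Gamma)\subset C({}^{\textrm{S}}X_\Gamma)$ then reproduces precisely the cycle $[\mathbf{H},\mathbf{m},\mathbf{P}]$ of Subsection~\ref{subsect:fclass}, whence $PD([\sigma_{nc}(P)])=[P]$.

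The one point that requires care here is that the parametrix $Q$ used to assemble the Fredholm module $\mathbf{P}$ and the parametrix implicit in the Fredholm-module structure of $P^F_{ad}$ at $t=1$ determine the same K-homology class; since both invert the full symbol of $P$ and differ by a $\Gamma$-compactly supported smoothing remainder, this follows from a standard homotopy of parametrices, so the matching is independent of the auxiliary choices.
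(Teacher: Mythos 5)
Your proposal takes essentially the same route as the paper: the paper offers no independent proof of this theorem, deferring it entirely to Debord--Lescure--Rochon \cite{DLR} (building on \cite{DL-poincare}), and the Kasparov-duality scheme you outline --- Dirac and dual-Dirac elements built from the deformation groupoid, the two duality equations, and tracing the definitions through $PD$ to identify $[P]$ with the image of $[\sigma_{nc}(P)]$ --- is precisely the strategy carried out in those references. Since you defer the genuinely hard steps (the construction of the Dirac element at the singular stratum and the verification of the duality equations) to the same sources the paper cites, your argument is correct to the same extent, and in the same way, as the paper's own treatment.
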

%

Now we want to compare the index class defined through the adiabatic deformation and the index class defined in the classical way:

\begin{theorem}\label{equality-index-classes}
Let $P\in	
	\Psi^0_c (G^\Gamma_{{\rm \Phi}})$  be a fully elliptic $\Phi$-operator. Then the adiabatic  index class 
	and the index class
	defined in \eqref{CS} through the Connes-Skandalis projector, are equal. In formulae:
	\begin{equation}\label{=}
	 \Ind^{{\rm ad}} (P)= \Ind_\Gamma (P) \quad\text{in}\quad K_* (C^*_r \Gamma)
	\end{equation}

	In fact, more is true, in that there is a commutative diagram with vertical maps isomorphisms of abelian groups:
	\begin{equation}\label{commutative-equality}
\xymatrix{K_*(C^*_r(T^{NC}X))\ar[r]^(.5){\Ind^{{\rm ad}}}\ar[d]_{{\rm PD}}&K_*(C^*_r(\mathring{X}_\Gamma\underset{\Gamma}{\times}\mathring{X}_\Gamma))\ar[d]^{\simeq}\\
	K^\Gamma_* ({}^{\textrm{S}}X_\Gamma) \ar[r]^(.5){\mu_\Gamma}& K_* (C^*_r \Gamma)	}
	\end{equation}
	\end{theorem}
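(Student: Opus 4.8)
The plan is to establish the commutative square \eqref{commutative-equality}; the numerical equality \eqref{=} then drops out by evaluating on the class $[\sigma_{nc}(P)]$, since the Poincar\'e duality theorem gives $\mathrm{PD}[\sigma_{nc}(P)]=[P]$, equation \eqref{assembly-index} gives $\mu_\Gamma[P]=\Ind_\Gamma(P)$, and \eqref{index-homo-nc-ter} gives $\Ind^{\mathrm{ad}}[\sigma_{nc}(P)]=\Ind^{\mathrm{ad}}(P)$. Because every class in $K_*(C^*_r(T^{NC}X))$ is the noncommutative symbol of a fully elliptic operator, it is enough to check commutativity on such generators, i.e.\ that the right-hand Morita isomorphism sends $\Ind^{\mathrm{ad}}(P)$ to $\Ind_\Gamma(P)$.

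First I would unwind the adiabatic route. By the factorization \eqref{index-homo-nc-bis} and the fact, built into Definition \ref{sigmanc}, that $[\sigma_{nc}(P)]=(\mathrm{ev}_0)_*[P_{ad}^F]$, one has $[\mathrm{ev}_0]^{-1}[\sigma_{nc}(P)]=[P_{ad}^F]$ and hence $\Ind^{\mathrm{ad}}(P)=(\mathrm{ev}_1)_*[P_{ad}^F]$. The whole point of passing to the Fredholm deformation $G^{[0,1],F}_{ad}$ is that removing $\partial X\times\{1\}$ leaves only the interior pair groupoid at $t=1$, so that $\mathrm{ev}_1$ evaluates $P_{ad}^F$ to the honest operator $P$, now regarded as invertible modulo the ideal $C^*_r(\mathring{X}_\Gamma\times_\Gamma\mathring{X}_\Gamma)$; full ellipticity is precisely what makes the restriction to $\partial X\times\{1\}$ a degenerate cycle, so that $[P_{ad}^F]$ lifts as in subsection \ref{subsect:k-classes-general}. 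Thus $\Ind^{\mathrm{ad}}(P)$ is the index class computed from the Kasparov module $(\mathcal{H},\,\begin{pmatrix}0&Q\\ P&0\end{pmatrix})$ attached to any parametrix $Q$ with smoothing remainders $R,S$.

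It then remains to match this Kasparov-module index with the Connes--Skandalis class $[p]-[p_0]$ of \eqref{CS}. This is the standard equivalence between the two realizations of the index of an operator relative to an ideal: the same parametrix data $(Q,R,S)$ that builds the Kasparov cycle also builds the idempotents \eqref{CS-projector}, and a routine homotopy contracting the remainders (cf.\ \cite{CM}) identifies the two elements of $K_0$. Transporting through the Morita isomorphism $K_*(C^*_r(\mathring{X}_\Gamma\times_\Gamma\mathring{X}_\Gamma))\cong K_*(C^*_r\Gamma)$ yields exactly $\Ind_\Gamma(P)$, closing the chase.

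The main obstacle is the middle step, i.e.\ making $(\mathrm{ev}_1)_*[P_{ad}^F]=\Ind_\Gamma(P)$ rigorous, which is where the detailed deformation-groupoid bookkeeping of Section \ref{sect:compatibility-proofs} is needed: one must construct $P_{ad}$ with the correct restrictions at $t=0$ and $t=1$, verify the $\mathrm{KK}$-equivalence $[\mathrm{ev}_0]$ in the singular $\Gamma$-equivariant setting, and confirm that full ellipticity is exactly the hypothesis letting $P_{ad}^F$ descend to $C^*_r(G^{[0,1],F}_{ad})$. Following Monthubert--Pierrot, Debord and Skandalis \cite{MP,DSk}, the most economical way to organize this is to give $\mu_\Gamma\circ\mathrm{PD}$ its own deformation-groupoid description, so that both routes around \eqref{commutative-equality} are realized by the same pair of evaluation $\mathrm{KK}$-classes and naturality of the adiabatic construction under restriction and inclusion of $G$ forces the square to commute.
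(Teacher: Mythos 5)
Your skeleton agrees with the paper's: both arguments factor the adiabatic index as $\Ind^{\rm ad}=[\mathrm{ev}_1]\circ[\mathrm{ev}_0]^{-1}$ as in \eqref{index-homo-nc-bis}, reduce \eqref{=} to identifying the evaluation at $t=1$ of $[P_{ad}^F]$ with the Connes--Skandalis class \eqref{CS}, and then obtain \eqref{commutative-equality} from Poincar\'e duality and \eqref{assembly-index}. Where you diverge is in the proposed implementation, and that is exactly where your proof stops being a proof. You want to push the Kasparov cycle $[P_{ad}^F]$ forward along $\mathrm{ev}_1$, recognize the module $\bigl(\mathcal{H},\begin{pmatrix}0&Q\\P&0\end{pmatrix}\bigr)$, and then invoke a ``routine homotopy'' to reach the idempotents \eqref{CS-projector}; you yourself flag this middle step as ``the main obstacle'' and defer it to the ``detailed deformation-groupoid bookkeeping of Section \ref{sect:compatibility-proofs}.'' But that bookkeeping is not a technical footnote to your argument --- it \emph{is} the paper's proof, and the paper organizes it quite differently: instead of manipulating Kasparov cycles, it recasts $\Ind_\Gamma(P)$ as a \emph{relative} index. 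Concretely, it introduces the morphism $\mathfrak{m}\colon C(X)\to\Sigma$ and the relative class $[P]_{rel}\in K_*(\mathfrak{m})$ of \eqref{relative-class}, computes the inverse of the excision isomorphism explicitly so that its output is literally the Connes--Skandalis projector difference \eqref{cm-projection}, and then assembles the three-by-three diagram \eqref{9pieces-diagram}, whose rows are evaluation maps and whose columns are (multiplication, excision). The needed isomorphisms (the top-row evaluations and $(M^{\rm NC},\id)_*$) are proved by mapping-cone and K-contractibility arguments, and the proof closes by checking $\Theta(\sigma_{nc}(P))=[P]_{rel}$ via the intermediate class of $(\sigma(P)\otimes\id_{[0,1]},N(P))$ in $K_*(\mathfrak{m}^F)$. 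Commutativity of everything is then automatic from functoriality of relative K-theory, with no homotopy of projectors and no direct push-forward of unbounded/bounded cycles required.

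So the genuine gap is concrete: your step ``$(\mathrm{ev}_1)_*[P_{ad}^F]=\Ind_\Gamma(P)$'' is asserted, not established, and it hides two nontrivial identifications --- between the groupoid Hilbert module over $C^*_r(\mathring{X}_\Gamma\times_\Gamma\mathring{X}_\Gamma)$ at $t=1$ and the Mishchenko-type module $\mathcal{E}$ used to define \eqref{CS}, and between the KK-theoretic index of the resulting cycle and the specific projector formula \eqref{CS-projector} (the paper's excision computation, producing \eqref{cm-projection}, is precisely the rigorous version of your ``routine homotopy''). A secondary unsupported claim is your reduction of the commutativity of \eqref{commutative-equality} to ``generators'': that every class in $K_*(C^*_r(T^{NC}X))$ is the noncommutative symbol of a fully elliptic operator is a realization theorem that you neither prove nor cite, whereas the paper's diagram \eqref{9pieces-diagram} yields the identity $\Ind^{\rm ad}=\Ind^{\rm rel}\circ\Theta$ at the level of homomorphisms, so no such surjectivity is needed for the index-theoretic content of the theorem.
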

	Consequently, if $[\sigma_{nc} (P)] \in K_*(C^*_r(T^{NC}X))$ is the non-commutative symbol
	of a fully elliptic $\Gamma$-equivariant $\Phi$-operator $P$ and $[P]\in K^\Gamma_* ({}^{\textrm{S}}X_\Gamma)$ is its K-homology class then, identifying the groups on the right hand side through the right vertical isomorphism, we 
	have 
	\begin{equation}\label{=bis}
	 \Ind^{{\rm ad}} (P)\equiv  [ev_1] \circ [ev_0]^{-1} ([\sigma_{nc} (P)])= \mu_\Gamma ({\rm PD}([\sigma_{nc} (P)])) \equiv
	  \mu_\Gamma ([P]) 
	  	  \quad\text{in}\quad K_* (C^*_r \Gamma)
	\end{equation}
	

\section{Compatibility of K-theory classes: proofs}\label{sect:compatibility-proofs}

The Poincar\'e duality isomorphism is proved in great detail in \cite{DLR}, building on \cite{DL-poincare}. We therefore concentrate on the 
proof of Theorem \ref{equality-index-classes}.
\subsection{Prelimaries to the proof of Theorem \ref{equality-index-classes}: relative K-Theory.}\label{subsect:preliminaries}
We begin by giving some basic definitions and results concerning relative K-theory and excision.
Let $A$ and $B$ be $C^*$-algebras and let $\phi:A\to B$ a homomorphism of $C^*$-algebras.
We define $K_* (\phi)$ as the homotopy classes of triples $(p,q,z)$, where $p,q$ are projections in $\mathbb{M}_\infty(A)$ and $z$ is an invertible element in $\mathbb{M}_\infty(B)$ such that $z\phi(p)z^{-1}=\phi(q)$.\\
This is a special case of the notion, treated in \cite{SkExt}, of the K-theory group associated
to an element in $KK(A,B)$, that in this case is the element associated to $\phi$. It is proved in this article that there is an isomorphism
of abelian groups
\begin{equation}\label{mapping-cone-iso}
K_* (\phi)\simeq K_* (C_\phi)
\end{equation}
where $C_\phi$ denotes  the mapping cone of $\phi:A\to B$: $$C_{{\rm \Phi}}:=\{(a,f)\in A\oplus C_0 ([0,1),B) \;\;;\;\;
\phi(a)=f(0)\}.$$
The relative K-theory group of a morphism fits into the following long exact sequence:
\begin{equation}\label{long-relative}
\dots\rightarrow K_* (B\otimes C_0 (0,1)) \rightarrow K_* (\phi)\rightarrow K_* (A)\xrightarrow{\phi} K_* (B)\rightarrow\dots
\end{equation}
It turns out that if $\phi$ is surjective  then there is an excision isomorphism
\begin{equation*}
K_* (\phi)  \xrightarrow{{\rm ex}} K_* (\Ker (\phi))\,.
\end{equation*}
Finally if we have a commutative diagram 
\begin{equation}\label{commutative-relative}
\xymatrix{A\ar[r]^(.5){\phi}\ar[d]_{\alpha}&B\ar[d]^{\beta}\\
	A^\prime\ar[r]^(.5){\phi^\prime}& B^\prime	}
	\end{equation}
	then there is an associated group homomorphism 
	\begin{equation}\label{commutative-relative-bis}
	K_* (\phi)\xrightarrow{(\alpha,\beta)_*}  K_* (\phi^\prime)\,.
	\end{equation}
The diagram  \eqref{commutative-relative} induces a mapping of the long exact sequence \eqref{long-relative} for $A\xrightarrow{\phi}
B$
to the one for $A^\prime \xrightarrow{\phi^\prime} B^\prime$ and it is clear from the five-lemma that if $\alpha_*$ and $\beta_*$
are isomorphism of K-theory groups, then so is $K_* (\phi)\xrightarrow{(\alpha,\beta)_*}  K_* (\phi^\prime)$.

\subsection{
More preliminaries: the class $\Ind_\Gamma (P)\in K_* (C^*_r \Gamma)$ through relative K-Theory.}\label{relative}
%
We set
\begin{equation}\label{algebras-sigma}
\Sigma:= \overline{\Psi^0 (G^\Gamma_{{\rm \Phi}})}/ C^*_r (\mathring{X}_\Gamma \times_\Gamma \mathring{X}_\Gamma)\,.
\end{equation}

We define a morphism $\mathfrak{m}: C(X)\to \Sigma$ as the composition
$$C(X)\xrightarrow{m} \overline{\Psi^0 (G^\Gamma_{{\rm \Phi}})} \xrightarrow{\sigma_{f.e.}} \Sigma$$
with the first arrow given by the multiplication operator and the second arrow denoting the morphism
already considered in \eqref{f.e.extention}.

Now let $E,F$ be two vector bundles on $X$, let 
$(\sigma, N(P_\sigma))$ be a fully elliptic non-commutative symbol between $E$ and $F$. These data define 
a class 
\begin{equation}\label{relative-class}
	[P]_{rel}=
\left[\begin{pmatrix}1_E &0 \\0&0\end{pmatrix}, \begin{pmatrix}0 &0 \\0& 1_F\end{pmatrix}; \begin{pmatrix}0 &(\sigma, N(P_\sigma))^{-1}) \\(\sigma, N(P_\sigma))&0\end{pmatrix}\right]
\end{equation}
 in $K_*(\mathfrak{m})$, the K-theory of the morphism $\mathfrak{m}$, that is a pair of 
$C(X)$-modules that are isomorphic by means of $(\sigma, N(P_\sigma))$ when we see them as $\Sigma$-modules   
through $\mathfrak{m}$.

Since the following diagram
\[
\xymatrix{C(X)\ar[r]^(.5){\mathfrak{m}}\ar[d]_{M}&\Sigma \ar[d]^{\id}\\
	\overline{\Psi^0_c(G_{{\rm \Phi}})}\ar[r]^(.6){\sigma_{f.e.}}& \Sigma	}
\]
is commutative, we have a map 
\[
M_*\colon K_*(\mathfrak{m})\to K_*(\sigma_{f.e.}).
\]

Since $\sigma_{f.e.}$ is a quotient map we see that  $K_*(\sigma_{f.e.})$ is 
isomorphic, through the inverse of the excision map, to
$K_* (\Ker(\sigma_{f.e.}))= K_*(C^*_r(\mathring{X}_\Gamma \times_{\Gamma} \mathring{X}_\Gamma))$. \\
 The inverse of the excision map is given by the following two steps: first we represent any class in $K_*(\sigma_{f.e.})$ by a relative cycle $[p,q,z]$ with respect to 
 $\mathbf{q}\colon C^*_r(\mathring{X}_\Gamma \times_{\Gamma} \mathring{X}_\Gamma)^+\to \bbC$, the quotient map from the unitization of $C^*_r(\mathring{X}_\Gamma \times_{\Gamma} \mathring{X}_\Gamma)$ to $\bbC$; then $[p,q,z]$ is sent to $[p]-[q]$, which, by definition of $K_*(\mathbf{q})$, is a class in $K_*(C^*_r(\mathring{X}_\Gamma \times_{\Gamma} \mathring{X}_\Gamma))$.\\
 More explicitely, let us consider the class \eqref{relative-class}. If we take the following invertible lift 
 \[
 T=\begin{pmatrix} 1_E-QP& -(1_E-QP)Q+Q\\ P& 1_F-PQ\end{pmatrix}\quad\text{of}\quad \begin{pmatrix}0 &(\sigma, N(P_\sigma))^{-1}) \\(\sigma, N(P_\sigma))&0\end{pmatrix},
 \]
 where $Q$ is a full parametrix of $P$,
 and if $T_t$ is a path through invertible elements from $1_E\oplus 1_F$ to $T$ (that always exists by the Theorem of Kuiper), then
 \[
 \left[T_t\begin{pmatrix}1_E &0 \\0&0\end{pmatrix}T_t^{-1}, \begin{pmatrix}0 &0 \\0& 1_F\end{pmatrix}; \begin{pmatrix}0 &(\sigma, N(P_\sigma))^{-1}) \\(\sigma, N(P_\sigma))&0\end{pmatrix}\sigma_{f.e.}(T_t^{-1})\right]
 \]
 is a homotopy from the cycle in \eqref{relative-class} to a relative cycle with respect to $\mathbf{q}$, whose image in $K_*(C^*_r(\mathring{X}_\Gamma \times_{\Gamma} \mathring{X}_\Gamma))$
 is given by:
 \begin{equation}\label{cm-projection}
 T\begin{pmatrix}1_E  & 0\\ 0& 0\end{pmatrix}T^{-1}- \begin{pmatrix}0  & 0\\ 0& 1_F\end{pmatrix}\in \mathbb{M}_2(C^*_r(\mathring{X}_\Gamma\times_\Gamma\mathring{X}_\Gamma)).
 \end{equation}

\begin{definition}\label{analytical index}
	The relative analytical index $\Ind^{{\rm rel}}$ of a fully elliptic non-commutative symbol
	$(\sigma, N(P_\sigma))$ with coefficients in two vector bundle $E,F$ is given by
	the following composition 
	\[
	\xymatrix{K_*(\mathfrak{m})\ar[r]^{M_*}& K_*(\sigma_{f.e.})\ar[r]^(.35){\operatorname{ex}}&K_*(C^*_r(\mathring{X}_\Gamma 
	\times_\Gamma \mathring{X}_\Gamma))}.
	\]
	\end{definition}

\begin{remark}
It is a classic fact that the boundary map $\partial_{full}$ in the K-theory sequence
associated to 
\[\xymatrix{
	0\ar[r]& C^*_r(\mathring{X}_\Gamma\times_\Gamma\mathring{X}_\Gamma)\ar[r] & \overline{\Psi^0_c(G^\Gamma_{{\rm \Phi}})}\ar[r]^(.6){\sigma_{f.e.}}&  \Sigma\ar[r]& 0
}
\]
sends a fully elliptic symbol $(\sigma,N(P_\sigma))$ to the class \eqref{cm-projection}.\\
Consider now the K-theory homomorphism $\psi: K_{1} (\Sigma)\to K_0 (\mathfrak{m})$, which 
 is the natural morphism that associates to an element $z\in \operatorname{GL}_k(\Sigma)$ the relative cycle $(1_k,1_k, z)$; then the composition of $\psi$ with 
the relative index homomorphism $\Ind^{{\rm rel}}$ of Definition \ref{analytical index} is easily seen to be equal to the above boundary map .
Put it differently, the following diagram 
\[
\xymatrix{K_{*+1}(\Sigma)\ar[d]\ar[rrd]^{\partial_{full}}& & \\
	K_*(\mathfrak{m})\ar[r]^{M_*}& K_*(\sigma_{f.e.})\ar[r]^(.35){\operatorname{ex}}&K_*(C^*_r(\mathring{X}_\Gamma 
	\times_\Gamma \mathring{X}_\Gamma)) }
\]
is commutative.
\end{remark}
%


\subsection{Proof of Theorem \ref{equality-index-classes}}
We begin by introducing some useful notation:
\begin{itemize}
\item $\Sigma^F := \overline{\Psi^0_c (G^{[0,1]}_{ad})}/ C^*_r (G_{ad}^{[0,1],F})$ and $\sigma^F: \overline{\Psi^0_c (G)}  \to \Sigma^F $ is the associated quotient map;
\item $T^{{\rm NC}}_{[0,1]}X$ is defined as the restriction of $G^{[0,1]}_{ad}$ to $\mathring{X}\times \{0\}
\cup \partial X\times [0,1]$;
\item $\Sigma^{{\rm NC}} :=  \overline{\Psi^0_c (T^{{\rm NC}}_{[0,1]}X)}/C^*_r (T^{{\rm NC}} X)$ and  $\sigma^{{\rm NC}}:  \overline{\Psi^0_c (T^{{\rm NC}}_{[0,1]}X)}\to \Sigma^{{\rm NC}} $ is the associated quotient map;
\item $M^{ad} : C(X\times [0,1])\to \overline{\Psi^0_c ((G^\Gamma_{{\rm \Phi}})_{ad})}$ 
and  $M^{{\rm NC}} : C(\mathring{X}\times \{0\}
\cup \partial X\times [0,1])\to  \overline{\Psi^0_c (T^{{\rm NC}}_{[0,1]}X)}$ 
are given by  multiplication operators.
\item $\mathfrak{m}^{F}:  C(X\times [0,1])\to \Sigma^F$
is the composition of $M^{ad}$ with the quotient map $\sigma^F$;
\item $\mathfrak{m}^{{\rm NC}}: C(\mathring{X}\times \{0\}
\cup \partial X\times [0,1]) \to \Sigma^{{\rm NC}}$ is the composition of $M^{{\rm NC}}$ with the quotient map
 $\sigma^{{\rm NC}}$\,.
\end{itemize}
All these algebras and maps, together with the inverse of the excision isomorphisms, denoted ${\rm ex}$, fit into the following diagram:

\begin{equation}\label{9pieces-diagram}
\xymatrix{K_* (\mathfrak{m}^{{\rm NC}})\ar[d]_{(M^{{\rm NC}},\id)_*}&K_* (\mathfrak{m}^{F})\ar[l]_{({\rm ev}_0)_*}\ar[r]^{({\rm ev}_1)_*}
\ar[d]^{(M^{ad},\id)_*}&K_* (\mathfrak{m})\ar[d]^{(M,\id)_*}\\
K_* (\sigma^{{\rm NC}})\ar[d]_{{\rm ex}}&K_* (\sigma^{F})\ar[l]_{({\rm ev}_0)_*}\ar[r]^{({\rm ev}_1)_*}
\ar[d]^{{\rm ex}}&K_* (\sigma_{f.e.})\ar[d]^{{\rm ex}}\\
K_*(C^*_r(T^{NC}X))& K_*(C^*_r((G_{{\rm \Phi}}^\Gamma)_F))\ar[l]_{({\rm ev}_0)_*}\ar[r]^(.45){\tiny{({\rm ev}_1)_*}}& K_*(C^*_r(\mathring{X}_\Gamma\underset{\Gamma}{\times}\mathring{X}_\Gamma))
}
\end{equation}
Here, with an abuse of notation that we have already employed we denote by ${\rm ev}_0$ the homomorphism induced by restriction from $X\times [0,1]$ to  $\mathring{X}\times \{0\}
\cup \partial X\times [0,1]$ and by ${\rm ev}_1$ the homomorphism induced by restriction from $X\times [0,1]$ to  
$\mathring{X}\times \{1\}$. Notice that  we have used \eqref{commutative-relative} and \eqref{commutative-relative-bis}repeatedly and that with an abuse of notation we write $({\rm ev}_0)_*$ instead of $({\rm ev}_0,{\rm ev}_0)_*$.
By functoriality of all these homomorphisms it is easy to see that this diagram commutes.\\ 
Observe  that the composition
$({\rm ev}_1)_* \circ ({\rm ev}_0)_*^{-1}$ in the bottom row  is the adiabatic index homomorphism $\Ind^{{\rm ad}}$, whereas the 
composition ${\rm ex}\circ M_*$ in the right column is the relative index homomorphism $\Ind^{{\rm rel}}$.\\
We shall now prove that the two homomorphisms appearing in the upper row are isomorphisms and the same is true
for the homomorphism  $(M^{{\rm NC}},\id)_*$, in the vertical left column.\\
Consider first 
\begin{equation}\label{2-isos}
K_* (\mathfrak{m}^{F})\xrightarrow{({\rm ev}_1,{\rm ev}_1)_*} K_* (\mathfrak{m})\quad \text{ and  }\quad
K_* (\mathfrak{m}^{F})\xrightarrow{({\rm ev}_0,{\rm ev}_0)_*} K_* (\mathfrak{m}^{{\rm NC}}).
\end{equation}
They are induced, respectively, by
the following squares, see  \eqref{commutative-relative}:

\begin{equation*}
\xymatrix{C(X\times [0,1])\ar[r]^(.7){\mathfrak{m}^F}\ar[d]_{{\rm ev}_1}&\Sigma^F \ar[d]^{{\rm ev}_1} & & C(X\times [0,1])\ar[r]^(.7){\mathfrak{m}^F}\ar[d]_{{\rm ev}_0}&\Sigma^{F} \ar[d]^{{\rm ev}_0}\\
	C(X)\ar[r]^(.6){\mathfrak{m}}& \Sigma & & C(\mathring{X}\times \{0\}
\cup \partial X\times [0,1])\ar[r]^(.75){\mathfrak{m}^{{\rm NC}}}& \Sigma^{\rm NC} 	}
\end{equation*}
Since the vertical morphisms are obviously surjective and the kernels of the are isomorphic to cones over suitable $C^*$-algebras,
we deduce that the vertical maps induce isomorphisms in K-theory. Consequently, by the argument given at the end
of Subsection \ref{subsect:preliminaries}, we see that the morphisms appearing in \eqref{2-isos} are indeed isomorphisms.\\
Consider next $K_* (\mathfrak{m}^{{\rm NC}})\xrightarrow{(M^{{\rm NC}},\id)_*} K_* (\sigma^{{\rm NC}})$, which is induced
by the following square:
$$ 
\xymatrix{C(\mathring{X}\times \{0\}
\cup \partial X\times [0,1])\ar[r]^(0.75){\mathfrak{m}^{{\rm NC}}}\ar[d]_{M^{{\rm NC}}}&\Sigma^{{\rm NC}}\ar[d]^{\id}\\
\overline{\Psi^0_c (T^{{\rm NC}}_{[0,1]}X)}\ar[r]^{\sigma^{{\rm NC}}} &\Sigma^{{\rm NC}} 
}
$$
Our goal is to show that $(M^{{\rm NC}},\id)_*$ is an isomorphism. To this end we first observe that, since $\id$ is an 
isomorphism, it suffices to prove that $M^{{\rm NC}}$ induces a  K-theory isomorphism (see again the remark at the  end
of Subsection \ref{subsect:preliminaries}). Next we remark that we have a mapping-cone long exact sequence associated to
$M^{{\rm NC}}$, viz.
\begin{equation*}
\dots\rightarrow K_* (\overline{\Psi^0_c (T^{{\rm NC}}_{[0,1]}X)}\otimes C_0 (0,1)) \rightarrow K_* (C_{M^{{\rm NC}}})\rightarrow 
K_* (C(\mathring{X}\times \{0\}
\cup \partial X\times [0,1]))\xrightarrow{M^{{\rm NC}}_*} K_* (\overline{\Psi^0_c (T^{{\rm NC}}_{[0,1]}X)})\rightarrow\dots
\end{equation*}
Hence, it suffices to prove that $K_* (C_{M^{{\rm NC}}})$ is trivial. To show this 
we consider the commutative  diagram  with exact rows 

\begin{equation*}
\xymatrix{0\ar[r]&C_0(\partial X\times (0,1])\ar[r] \ar[d]^{M^{\partial}\times\id_{(0,1]}}& C(\mathring{X}\times \{0\}
\cup \partial X\times [0,1])\ar[r]\ar[d]^{M^{{\rm NC}}}&C(X\times\{0\}) \ar[r]\ar[d]^{M^T}&0 \\
0\ar[r]&\overline{\Psi^0_c ( \partial G^\Gamma_{{\rm \Phi}} \times (0,1]  )}\ar[r] & \overline{\Psi^0_c (T^{{\rm NC}}_{[0,1]}X)}
\ar[r] & \overline{\Psi^0_c ( {}^{{\rm \Phi}} TX  )}\ar[r] & 0
}
\end{equation*}
where the two vertical arrows 
\begin{equation*}
C_0(\partial X\times (0,1])\xrightarrow{M^{\partial}\times\id_{(0,1]}} \overline{\Psi^0_c ( \partial G^\Gamma_{{\rm \Phi}} \times (0,1]  )}\,\qquad\quad
C(X\times\{0\}) \xrightarrow{M^T} \overline{\Psi^0_c ( {}^{{\rm \Phi}} TX  )}
\end{equation*}
are given by obvious multiplication operators.
The diagram induces the following short exact sequence of
mapping-cone $C^*$-algebras :
\begin{equation*}
0\rightarrow C_{M^{\partial}\times\id_{(0,1]}} \rightarrow C_{M^{{\rm NC}}} \rightarrow C_{M^{T} }\rightarrow 0
\end{equation*}
Now we observe that $C_{M^{\partial}\times\id_{(0,1]}}$ is isomorphic to $C_{M^{\partial}}\otimes C_0 (0,1]$
which is K-contractible; next, using the fact that  $\overline{\Psi^0_c ( {}^{{\rm \Phi}} TX  )}$ is isomorphic to the 
algebra of continuous functions over the ball bundle of  ${}^{{\rm \Phi}} T^*X$, we see that $M^{T}$  is an homotopy
equivalence of $C^*$-algebras and thus its mapping cone is K-contractible. Summarizing,
$K_* (C_{M^{\partial}\times\id_{(0,1]}})$ and $K_* (C_{M^T})$ are both equal to the trivial group and therefore so
is $K_* (C_{M^{{\rm NC}}})$, which is what we needed to prove in order to establish that 
$(M^{{\rm NC}},\id)_*\colon K_* (\mathfrak{m}^{{\rm NC}})\to K_* (\sigma^{{\rm NC}})$ is an isomorphism.

Notice in particular that we can connect $K_*(C^*_r(T^{NC}X))$ to $K_* (\mathfrak{m})$
by means of an isomorphism, denoted here $\Theta$, obtained by travelling
up on the 
 the left column and then right on the top row of diagram \eqref{9pieces-diagram}.
 We end the proof of Theorem  \ref{equality-index-classes} by showing that if $P$ is a fully elliptic
 element in $\Psi^0_c (G_{{\rm \Phi}}^\Gamma)$ then $\Theta (\sigma_{{\rm nc}} (P))=[P]_{rel}$.
 To see this we define an element in $K_* (\mathfrak{m}^{F})$ which is mapped to $[P]_{rel}$
 through ${\rm ev}_1$ and it is mapped to $\sigma_{{\rm nc}} (P)$ by the homomorphism that goes left
 and then down on the external part of the diagram starting with $K_* (\mathfrak{m}^{F})$.
 This element is nothing but the class of $(\sigma (P)\otimes \id_{[0,1]}, N(P))\in K_* (\mathfrak{m}^{F})$.
 The image of this element through the homomorphism induced by ${\rm ev}_0$ is given
 by simple restriction, whereas its image through $M^{{\rm NC}}_*$ amounts to a change of coefficients
 in the modules associated to $E$ and $F$ (from continuous functions to pseudodifferential operators).
 Finally, that the image of this element under excision is given by $\sigma_{{\rm nc}} (P)$ is proved by
 proceeding as in Subsection \ref{relative}, once we identify $KK(\bbC, C^*_r(T^{NC}X))$ with
 $K_*(C^*_r(T^{NC}X))$.

\begin{remark}\label{remark:previous}
	The proof of Theorem \ref{equality-index-classes} follows closely the proof of the analogous Theorem in \cite{MP} for the analytical index of elliptic operators on Lie groupoids, in the case where the objects of the Lie groupoid form a closed manifold. 
	Moreover our result gives a generalization of \cite[Theorem 4.1]{CLM} from the case of a manifold
	boundary to the case of a manifold with fibered corners.
	Yet in another direction our proof generalizes  Propostion 3.8 in \cite{DSk}; notice  that our arguments here build directly
	on those given there. More precisely our proof generalizes \cite[Propostion 3.8]{DSk} to the case in which (in their notations) $\operatorname{Ind}_{G_F}$, where $G_F$ is here the restriction of the Lie groupoid $Blup^+_{r,s}$ to the boundary, is not invertible and it identifies, by means of the isomorphism $\Theta$ in the proof of Theorem \ref{equality-index-classes}, the relative index $\operatorname{ind}_{rel}\colon K_*(\mu)\to K_*(C^*(G_W))$ and the morphism $\operatorname{ind}_G^W\colon K_*(C^*(\mathcal{A}_WG))\to K_*(C^*(G_W))$, defined in \cite[Section 3.3.3]{DSk}.
	\\
\end{remark}
\section{Fully elliptic  spin Dirac operators}\label{sect:dirac-fully}

\subsection{Spin Dirac operators}
Let $ {}^{\textrm{S}} X$ be a stratified pseudomanifold of dimension $n$, $X$ its resolution, $^{{\rm \Phi}}T X$ the associated ${\rm \Phi}$-tangent
bundle  and $g_{{\rm \Phi}}$ a rigid ${\rm \Phi}$-metric. We shall say that $ {}^{\textrm{S}} X$ is spin if its regular part, 
${\rm reg} ({}^{\textrm{S}} X)=\mathring{X}$, is a spin manifold, meaning that $T\mathring{X}$ is a spin bundle.  We fix a spin structure associated to $g_{{\rm \Phi}}$ and get a (complex)
spinor bundle $\slashed{S}$ on $\mathring{X}$.
We can equivalently  assume that 
 the bundle $^{{\rm \Phi}}T X$ is spin; indeed $\mathring{X}$ and $X$ are homotopically equivalent.
 A spin structure for  $(T^{{\rm \Phi}} X, g_{{\rm \Phi}})$ gives, by restriction, a spin structure for $({\rm reg} ({}^{\textrm{S}} X), g_{{\rm \Phi}})$;
 conversely, we can always extend a spin structure $({\rm reg} ({}^{\textrm{S}} X), g_{{\rm \Phi}})$ to a spin structure on $(T^{{\rm \Phi}} X, g_{{\rm \Phi}})$.
Summarzing:  we do spin geometry on $ {}^{\textrm{S}} X$ by passing to the algebroid $T^{{\rm \Phi}} X$, see below for the details.\\

 This brings us to the following general treatment.
  
 \bigskip
We define generalized Dirac operators on a Lie groupoid $G\rightrightarrows X$ with Lie algebroid $\mathfrak{A}(G)$
as follows:  let $g$ be a metric on $\mathfrak{A}(G)$, by pull-back it defines a $G$-invariant metric on $\ker{ds}$ along the $s$-fibers of $G$. Let $\nabla$ be the fiber-wise Levi-Civita connection associated to this metric.
\begin{definition}\label{diracgr}
	Let $\mathrm{Cliff}\left(\mathfrak{A}(G)\right)$ be the Clifford algebra bundle over $X$ associated to the metric $g$.
	Let $S$ be a bundle of Clifford modules over  $\mathrm{Cliff}\left(\mathfrak{A}(G)\right)$ and let $c(X)$ denotes the Clifford multiplication by $X\in\mathrm{Cliff}\left(\mathfrak{A}(G)\right)$.
	Assume that $S$ is equipped with a metric $g_S$ and a compatible connection $\nabla^S$ such that:
	\begin{itemize}
		\item Clifford multiplication is skew-symmetric, that is
		\[
		\langle c(X)s_1,s_2\rangle+\langle s_1,c(X)s_2\rangle=0
		\]
		for all $X\in C^\infty\left(X,\mathfrak{A}(G)\right)$ and $s_1,s_2\in C^\infty(X,S)$;
		\item $\nabla^S$ is compatible with the Levi-Civita connection $\nabla$, namely
		\[
		\nabla^S_X(c(Y)s)=c(\nabla_XY)s+c(Y)\nabla^S_X(s)
		\]
		for all $X,Y\in C^\infty\left(X,\mathfrak{A}(G)\right)$ and $s\in  C^\infty(X,S)$.
	\end{itemize}
	The Dirac operator associated to these data is defined as
	\[
	\slashed{D}_S\colon s\mapsto \sum_{\alpha}c(e_\alpha)\nabla^S_\alpha(s)
	\]
	for $s\in C^\infty(X,S)$ and $\{e_\alpha\}_{\alpha\in A}$ a local orthonormal frame.
\end{definition}
Particular cases of this construction are given by
generalized Dirac operators on manifolds with a Lie structure at infinity, which are treated in detail in \cite{ALN} 
\cite{ALN-geometry} \cite{LN-approach} and in the recent preprint \cite{Bohlen-Schrohe}. In particular, one can define in this generality the notion of spin Lie manifold and that of associated  spin
Dirac operator. We  recall briefly the very natural definitions in the particular case treated here.

Let ${}^{\textrm{S}} X$ be  a stratified space  of dimension $n$, with resolution $X$. As already anticipated
we say that ${}^{\textrm{S}} X$ is 
spin if the vector bundle ${}^{{\rm \Phi}} TX$ is  spin \footnote{this is a topological condition and  could be equivalently 
imposed on $TX$, given that ${}^{{\rm \Phi}} TX$ and $TX$ are isomorphic (albeit in a non-natural way).}. If $g_{{\rm \Phi}}$ is a fibered corner metric on ${}^{{\rm \Phi}} TX$
then a ${\rm \Phi}$-spin structure associated to $g_{{\rm \Phi}}$ is a ${\rm Spin(n)}$-principal bundle $P_{{\rm Spin}} ({}^{{\rm \Phi}} TX)$
together with    a 2-fold covering map to the ${\rm SO(n)}$-principal bundle of orthonormal frames of  $({}^{{\rm \Phi}} TX,
g_{{\rm \Phi}})$. We have a natural notion of spinor bundle $\slashed{S}$, obtained from $P_{{\rm Spin}} ({}^{{\rm \Phi}} TX)$ through the complex spinor representation
of ${\rm Spin(n)}$. The spinor bundle is an  example of a bundle of  Clifford modules $W\to X$
for the $\Phi$-cotangent bundle ${}^{{\rm \Phi}} T^*X$ (briefly, a $\Phi$-Clifford module); 
this notion can of course be given in general, without the spin-assumtion.   On a $\Phi$-Clifford
module $W\to X$ we  can talk about a 
Clifford $\Phi$-connection  $\nabla^W\in\Diff^1_{{\rm \Phi}} (X;W,W\otimes {}^{{\rm \Phi}} T^*X)$ where the adjective Clifford refers to the
usual compatibility with the Levi-Civita connection associated to $g_{{\rm \Phi}}$ \footnote{since  $C^\infty(X, {}^{{\rm \Phi}} TX)$ has the structure of a Lie algebra, the usual definition of Levi-Civita connection can be given}.
To the Clifford module structure and the 
given connection there is associated in a natural way a generalized Dirac operator $D\in \Diff^1_{{\rm \Phi}}(X,W)$
\begin{equation}\label{Dirac}
D:= c\circ \nabla^W
\end{equation}
where we have denote by $c$ the map $C^\infty (X,W\otimes  {}^{{\rm \Phi}} T^*X)\to C^\infty (X,W)$ induced
by the Clifford action on $W$. We can write as usual  the action of $D$ as
\begin{equation}\label{Dirac-local}
D (s) =\sum_{\alpha}c(e^\alpha)\nabla^{W}_\alpha(s)
	\end{equation}
for $s\in C^\infty(X,W)$ and $\{e_\alpha\}$ a local orthonormal frame of ${}^{{\rm \Phi}} TX$
with dual basis $\{e^\alpha\}$.

If $W$ is the spinor bundle and the Clifford connection is the one induced by the Levi-Civita
connection on ${}^{{\rm \Phi}} TX$ then we obtain the spin Dirac operator $\slashed{D} \in \Diff^1_{{\rm \Phi}}(X,\slashed{S})$.\\

Through the bijection between $\Diff^1_{{\rm \Phi}}(X;\slashed{S})$ and $\Diff^1 (G_{{\rm \Phi}}; r^*\slashed{S})$ we obtain also
the Dirac operator on the groupoid $G_{{\rm \Phi}}$, denoted $\slashed{\mathcal{D}}\in \Diff^1 (G_{{\rm \Phi}}; r^*\slashed{S})$. In fact, as explained in detail in \cite[Proposition 6.1]{LN-approach}
we can define directly the Dirac operator on $G_{{\rm \Phi}}$  by lifting the Clifford module structure of $\slashed{S}$ 
to $r^* \slashed{S}$ and by using the Levi-Civita connection 
on $G_{{\rm \Phi}}$ associated to the pull-back of $g_{{\rm \Phi}}$ to  a $G_{{\rm \Phi}}$-invariant metric on $\ker{ds}$.
Through the usual formula \eqref{Dirac} we define in this way an element $\slashed{\mathcal{D}}\in \Diff^1 (G_{{\rm \Phi}}; \slashed{S})$
which corresponds to  $\slashed{D} \in \Diff^1_{{\rm \Phi}}(X,\slashed{S})$
through the bijection between $\Diff^1_{{\rm \Phi}}(X;\slashed{S})$ and $\Diff^1 (G_{{\rm \Phi}}; r^*\slashed{S})$.

\begin{remark}\label{dirac-on-groupoids}
The above procedure can be applied in general, to any Lie groupoid with a spin Lie algebroid; in particular we also 
get in this way a Dirac operator on the adiabatic deformation $(G_{{\rm \Phi}})^{[0,1]}_{ad}$. 
\end{remark}

The operator $\slashed{D} \in  \Diff^1 (G_{{\rm \Phi}};r^* \slashed{S})$ defines an unbounded regular operator 
$\overline{\slashed{D}}$ on the Hilbert
$C^*_r(G_{{\rm \Phi}})$-Hilbert module $\mathcal{E}$ given, by definition, by the closure of $C^\infty_c (G_{{\rm \Phi}},r^* \slashed{S}\otimes\Omega^{1/2})$
in the norm associated to the $C^*_r(G_{{\rm \Phi}})$-valued scalar product $\langle\xi,\eta\rangle (\gamma)=\int_{G_{s(\gamma)}} \langle\xi(\gamma),\eta(\gamma^{-1}\gamma^\prime)\rangle_{\slashed{S}}$. In the sequel, adopting a widely used abuse of notation,
we shall not distinguish $\overline{\slashed{D}}$ from $\slashed{D}$.
As usual, $\slashed{D}$ admits a bounded
inverse 
if there exists a bounded
$C^*_r (G_{{\rm \Phi}})$-Hilbert module operator $Q$ such that $\slashed{D} \circ Q=\Id= Q\circ \slashed{D}$.

%
%
%
%

\subsection{Fully elliptic spin Dirac operators}
We want to describe, first of all,  the normal families of $\slashed{D}$ explicitly.
Let $H_i\xrightarrow{\phi_i} S_i$ be one of the fibered boundary hypersurfaces of $X$. The restriction of $G_{{\rm \Phi}}$ to $G_i:=H_i\setminus\bigcup_{i<j}H_j $ is equal to $(H_i\underset{S_i}{\times} {}^{\phi}TS_i\underset{S_i}{\times} H_i)_{|G_i}\times \RR$ 
and the normal family $\sigma_{\partial_i}(\slashed{D})$ relative to $\phi_i$ is just the Dirac operator of this groupoid,
 endowed with the Clifford structure it inherits from the Clifford structure of $G_{{\rm \Phi}}$. 

Bearing in mind the $G_{{\rm \Phi}}$-invariance this means that the following formula holds
\begin{equation}\label{normal-dirac}
\sigma_{\partial_i}(\slashed{D})(h) =(D_{\phi_i})_{s}+ \slashed{D}_{\RR^n,g_{s}}+\slashed{D}_{\RR}\,, \quad h\in H, \quad \phi_i (h)=s
\end{equation}
where $D_{\phi_i}$ is a vertical family of generalized Dirac operator on $\phi_i:H_i\to S_i$
and $(D_{\phi_i})_s$ is the operator of this family on the fiber $Z_s:=\phi_i^{-1} (s)$ ; we are identifying ${}^{{\rm \Phi}} T_{s}S_i$ with $\RR^n$ and denoting by $g_{s}$ the metric  $g_{S_i}$ at $s\equiv \phi(h)$; $\slashed{D}_{\RR^n,g_s}$ is an euclidian Dirac operator;
$\slashed{D}_{\RR}$ denotes the canonical Dirac operator on the flat real line.

%
%

\begin{lemma}\label{spin-strata}
If the base of the fibration $H_i\xrightarrow{\phi_i} S_i$ is assumed to be spin then 
the vertical tangent bundle ${}^{{\rm \Phi}} T(H_i / S_i)$ is spin and the vertical family 
$D_{\phi_i}$ is a family of spin Dirac operators: $D_{{\rm \Phi}}\equiv \slashed{D}_{{\rm \Phi}}$.
\end{lemma}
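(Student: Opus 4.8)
The plan is to reduce the whole statement to the orthogonal splitting of ${}^{{\rm \Phi}} TX$ along $H_i$ that is forced by the rigid product form of $g_{{\rm \Phi}}$ near the boundary. Working in the adapted local frame $x^2\pa_x,\ x\pa_{y_1},\dots,\ \pa_{z_1},\dots$, the $\pa_{z_\ell}$ span the $\Phi$-vertical bundle, the $x\pa_{y_j}$ project onto the base directions, and $x^2\pa_x$ spans a trivial real line. First I would record that this produces, over $H_i$, a canonical decomposition
\begin{equation*}
{}^{{\rm \Phi}} TX\big\rvert_{H_i}\ \cong\ {}^{{\rm \Phi}} T(H_i/S_i)\ \oplus\ \phi_i^*\big({}^{{\rm \Phi}} N S_i\big),
\qquad {}^{{\rm \Phi}} N S_i\cong TS_i\oplus\underline{\RR},
\end{equation*}
with rigidity guaranteeing that the splitting is orthogonal and that $g_{{\rm \Phi}}$ is of product type in it near $H_i$.

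Given the splitting, the first assertion is a routine obstruction-theoretic computation via the two-out-of-three principle for spin structures. The restriction ${}^{{\rm \Phi}} TX|_{H_i}$ of a spin bundle is spin; the summand $\phi_i^*({}^{{\rm \Phi}} N S_i)\cong \phi_i^* TS_i\oplus\underline{\RR}$ is spin because $TS_i$ is spin by hypothesis, and pullbacks and trivial summands preserve the existence of a spin structure; hence the complementary summand ${}^{{\rm \Phi}} T(H_i/S_i)$ is spin. Concretely this is the additivity of $w_1$ and $w_2$ under Whitney sum: both classes of ${}^{{\rm \Phi}} T(H_i/S_i)$ vanish because those of the total restriction and of $\phi_i^*({}^{{\rm \Phi}} N S_i)$ do, the cross term in the Whitney formula dropping out once orientability is established. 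The same bookkeeping provides a preferred spin structure on ${}^{{\rm \Phi}} T(H_i/S_i)$, induced from the chosen spin structures on ${}^{{\rm \Phi}} TX$ and on $\phi_i^*({}^{{\rm \Phi}} N S_i)$ through $\mathrm{Spin}(p)\times_{\Z/2}\mathrm{Spin}(q)\to\mathrm{Spin}(p+q)$.

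The substance of the lemma is the second assertion, identifying $D_{\phi_i}$ with the vertical spin Dirac operator for this spin structure. Here I would use the $\Z/2$-graded factorization of Clifford modules compatible with the splitting, $\mathrm{Cl}\big({}^{{\rm \Phi}} T(H_i/S_i)\oplus\phi_i^*{}^{{\rm \Phi}} N S_i\big)\cong \mathrm{Cl}\big({}^{{\rm \Phi}} T(H_i/S_i)\big)\,\widehat{\otimes}\,\mathrm{Cl}\big(\phi_i^*{}^{{\rm \Phi}} N S_i\big)$, so that the restriction $\slashed{S}|_{H_i}$ factors with the vertical spinor bundle $\slashed{S}\big({}^{{\rm \Phi}} T(H_i/S_i)\big)$ as one tensor factor. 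Because the metric is rigid, the Levi-Civita connection of $g_{{\rm \Phi}}$ splits along the decomposition, and its vertical component restricts on each fibre $Z_s=\phi_i^{-1}(s)$ to the Levi-Civita connection of the fibre metric, hence lifts to the spin connection of the induced vertical spin structure. Matching this against the explicit form \eqref{normal-dirac}, in which $(D_{\phi_i})_s$ is exactly the summand built from the vertical Clifford action and the vertical connection, I would conclude that $(D_{\phi_i})_s$ is the spin Dirac operator of $Z_s$, i.e. $D_{{\rm \Phi}}\equiv\slashed{D}_{{\rm \Phi}}$ (the $\Phi$-spin Dirac operator of the fibre, in the higher-depth case where $Z_s$ itself carries a fibered-corner structure).

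The topological step is entirely formal; the part requiring care, and the main obstacle, is the second one: one must verify that restricting the Schwartz kernel of $\slashed{D}$ to the front face $\operatorname{ff}_{\phi_i}$ and unwinding the suspension structure of the normal operator genuinely reproduces the vertical spin connection, and not merely some Clifford-compatible connection. This is precisely where the product (rigid) structure of $g_{{\rm \Phi}}$ near $H_i$, together with the orthogonality of the splitting, enters essentially, and I would treat it by a direct computation of $\nabla^{\slashed{S}}$ in the adapted frame, checking that the mixed base–fibre Christoffel terms do not contribute to the vertical piece.
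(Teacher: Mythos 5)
Your proof is correct and takes essentially the same route as the paper's: the paper likewise deduces that $H_i$ is spin from the spin assumption on the total space, then applies the two-out-of-three principle to the fibration $Z_i\to H_i\xrightarrow{\phi_i} S_i$ of spin manifolds, citing Lawson--Michelsohn for the induced vertical spin structure and for the identification of the fibrewise operators with spin Dirac operators. Your writeup simply spells out, via the orthogonal $\Phi$-splitting, the Whitney-sum computation of $w_1,w_2$, and the Clifford/connection factorization under the rigid product metric, the details that the paper's very short proof delegates to that citation.
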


\begin{proof}
As we are assuming that $M$ is spin, we also obtain that $H_i$ is spin.
Since, by assumption, $S_i$ is spin, then \[Z_i\rightarrow H_i\xrightarrow{\phi_i} S_i\] is a fibration of spin manifolds.
See \cite{lawson89:_spin}
\end{proof}

%
%
%
%
%
%
%
%
%
%
%

\begin{proposition}\label{fullellipticity}
If for each $i\in\{1,\dots,k\}$ $D_{\phi_i}$ is a family of invertible Dirac operators, then $\slashed{D}$ is fully elliptic.
\end{proposition}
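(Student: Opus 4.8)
The goal is to check the two defining conditions of full ellipticity (Definition \ref{fully-groupoid}, equivalently the microlocal condition recalled in Subsection \ref{phi-calculus}): that $\slashed{D}$ is elliptic, and that every normal operator $\sigma_{\partial_i}(\slashed{D})$ is invertible. Ellipticity requires no hypothesis at all: by \eqref{Dirac} the principal symbol of $\slashed{D}$ is Clifford multiplication, $\sigma_1(\slashed{D})(x,\xi)=ic(\xi)$ for $\xi\in {}^{{\rm \Phi}} T^*_xX$, and the Clifford relation $c(\xi)^2=-|\xi|^2$ shows it is invertible for every $\xi\neq 0$. Thus the entire content of the statement is the invertibility of the normal families, which is exactly where the hypothesis on $D_{\phi_i}$ enters.

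Fix $i$ and a point $s=\phi_i(h)\in S_i$. By \eqref{normal-dirac} the normal operator is a sum of three Dirac-type operators
\[
\sigma_{\partial_i}(\slashed{D})(s)=(D_{\phi_i})_s+\slashed{D}_{\R^n,g_s}+\slashed{D}_{\R}
\]
acting on sections over the product $\R^n\times\R\times Z_s$, where $Z_s=\phi_i^{-1}(s)$ and $\R^n\cong {}^{{\rm \Phi}} T_sS_i$. The first step I would carry out is to exploit that a rigid ${\rm \Phi}$-metric has an exact product form near $H_i$: the induced Clifford module and connection split as an external product over the three factors, so that the Clifford generators belonging to different factors anticommute while the associated covariant derivatives commute (all mixed curvature terms vanishing). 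Expanding the square then annihilates every cross term and produces the decoupling
\[
\sigma_{\partial_i}(\slashed{D})(s)^2=(D_{\phi_i})_s^2+\slashed{D}_{\R^n,g_s}^2+\slashed{D}_{\R}^2 .
\]

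I would then diagonalize the two Euclidean summands by the partial Fourier transform in the $\R^n\times\R$ variables: under it $\slashed{D}_{\R^n,g_s}$ and $\slashed{D}_{\R}$ become Clifford multiplication by the dual covariables, so $\slashed{D}_{\R^n,g_s}^2+\slashed{D}_{\R}^2$ acts as multiplication by the non-negative scalar $|\xi|^2+\tau^2$. On the remaining factor the hypothesis that $(D_{\phi_i})_s$ is invertible supplies a lower bound $(D_{\phi_i})_s^2\geq c_s^2$ with $c_s>0$. Combining these gives $\sigma_{\partial_i}(\slashed{D})(s)^2\geq c_s^2$, so the self-adjoint operator $\sigma_{\partial_i}(\slashed{D})(s)$ has an open spectral gap around $0$ and is therefore invertible in $L^2$; its inverse is realized through the resolvent as a suspended pseudodifferential operator of order $-1$, hence maps rapidly decreasing sections to themselves. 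Running this for each $s$ and each $i$ yields invertibility of all normal families, which together with ellipticity is precisely full ellipticity.

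The step I expect to be the main obstacle is the decoupling of the square, i.e.\ the vanishing of all cross terms: this is exactly where rigidity (the product structure of $g_{{\rm \Phi}}$ near the boundary) is indispensable, since it is what forces the mixed second-order and curvature contributions to cancel and the Clifford multiplications in the three groups of directions to anticommute. A secondary point to be careful about is the higher-depth case, where $Z_s$ is itself a compact manifold with fibered corners and $(D_{\phi_i})_s$ is the induced ${\rm \Phi}$-Dirac operator on it; there one must read ``invertible'' as $L^2$-invertibility in the ${\rm \Phi}$-sense, so that it still delivers the lower bound $(D_{\phi_i})_s^2\geq c_s^2$. Once the decoupling is established the spectral-gap argument is entirely routine.
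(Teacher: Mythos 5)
Your proof is correct and takes essentially the same route as the paper's: both exploit the rigidity (product structure) of the metric near $H_i$ to make $(D_{\phi_i})_s$ anticommute with the Euclidean part, expand the square so the cross terms vanish, and deduce strict positivity of $\sigma_{\partial_i}(\slashed{D})^2$ — hence invertibility — from the invertibility of the vertical family. The extra details you supply (the explicit Clifford-symbol ellipticity check, the Fourier diagonalization of $\slashed{D}_{\R^n,g_s}^2+\slashed{D}_{\R}^2$, and the spectral-gap phrasing) are elaborations of the same argument rather than a different approach; the paper simply groups the two Euclidean operators into one self-adjoint operator whose square is automatically non-negative.
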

\begin{proof}
	Recall that $\slashed{D}$  is fully elliptic if $\sigma_{\partial_i}(\slashed{D})$ is a family of invertible operators for all $i\in I$.
We shall prove this assertion by  showing that $\sigma_{\partial_i}(\slashed{D})^2$ is a family of strictly positive operators for all $i\in I$.
Let $h\in H_i$ and let $s:=\phi_i (h)\in S_i$.
First recall that $\sigma_{\partial_i}(\slashed{D})(h)$ is an operator on $Z_{s}\times \RR^n\times \RR$ and that the metric on this manifold is a product-type metric $g_{Z_{s}}\oplus g_{\phi(h)}\oplus d\sigma^2$. Because of that
$(D_{\phi_i})_{\phi_i(h)}$ and $ \slashed{D}_{\RR^n,g_{\phi(h)}}+\slashed{D}_{\RR}$ anticommute.
Thus  we have the following identity
\[
\sigma_{\partial_i}(\slashed{D})(h)^2= (D_{\phi_i})_{s}^2+ (\slashed{D}_{\RR^n,g_{s}}+\slashed{D}_{\RR})^2
\]
which, by the strict positivity of  $(D_{\phi_i})_{s}^2$, implies the strict positivity of $\sigma_{\partial_i}(\slashed{D})(h)^2$.
\end{proof}

\begin{proposition}
If	the strata are spin and  the metrics on the links have positive scalar curvature, then $\slashed{D}$  is fully elliptic.
\end{proposition}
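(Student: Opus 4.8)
The plan is to combine the two preceding results, Lemma \ref{spin-strata} and Proposition \ref{fullellipticity}, the only genuinely new input being a fibrewise Lichnerowicz argument. Since Proposition \ref{fullellipticity} reduces full ellipticity of $\slashed{D}$ to the invertibility of each vertical family $D_{\phi_i}$, it suffices to produce that invertibility from the two standing hypotheses.

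First I would unpack the spin assumption. Saying that the strata are spin means in particular that each base $S_i$ of the boundary fibration $H_i \xrightarrow{\phi_i} S_i$ is spin, so Lemma \ref{spin-strata} applies: the vertical tangent bundle ${}^{{\rm \Phi}} T(H_i/S_i)$ is spin and the vertical family $D_{\phi_i}$ occurring in the normal operator formula \eqref{normal-dirac} is identified with a family of honest spin Dirac operators $\slashed{D}_{{\rm \Phi}}$ along the fibers $Z_s = \phi_i^{-1}(s)$.

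Next I would bring in positivity of the scalar curvature. The fibers $Z_s$ carry the fiber metric $g_{H_i/S_i}$, which by hypothesis has positive scalar curvature, so on each fiber the Lichnerowicz formula
\begin{equation*}
(\slashed{D}_{{\rm \Phi}})_s^2 = (\nabla^{Z_s})^* \nabla^{Z_s} + \tfrac{1}{4}\,\scal_{Z_s}
\end{equation*}
holds. In the depth-$1$ base case the fibers are closed spin manifolds, so the right-hand side is bounded below by $\tfrac{1}{4}\inf_{Z_s}\scal_{Z_s} > 0$; as the total space is compact and everything varies smoothly with $s$, this lower bound is uniform in $s \in S_i$, whence $(\slashed{D}_{{\rm \Phi}})_s$ is invertible for every $s$. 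Thus each $D_{\phi_i}$ is a family of invertible Dirac operators, and Proposition \ref{fullellipticity} immediately yields that $\slashed{D}$ is fully elliptic.

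The main obstacle is the higher-depth situation. There the fibers $Z_s$ are themselves resolutions of lower-depth stratified spaces equipped with fibered-corners metrics, so the invertibility of $(\slashed{D}_{{\rm \Phi}})_s$ cannot mean mere $L^2$-invertibility but rather full invertibility as a regular operator on the relevant Hilbert module. Establishing it requires an induction on the depth: the Lichnerowicz identity still produces a strictly positive lower bound for the closed-range part, but to convert this into genuine invertibility one must already know that $(\slashed{D}_{{\rm \Phi}})_s$ is fully elliptic one level down, which is exactly the inductive hypothesis (its own normal families being invertible by the positivity of scalar curvature on the deeper links). In the depth-$1$ base case this subtlety disappears and the argument is the classical Lichnerowicz one carried out above.
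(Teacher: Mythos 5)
Your proof is correct and follows essentially the same route as the paper's: reduce to invertibility of the vertical families via Proposition \ref{fullellipticity}, identify them as spin Dirac families via Lemma \ref{spin-strata}, and conclude invertibility by the Schr\"odinger--Lichnerowicz--Weitzenb\"ock formula on the fibers. Your closing remark on the higher-depth inductive subtlety is a point the paper's own (very terse) proof leaves implicit, but it does not change the argument.
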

\begin{proof}
	By Proposition \ref{fullellipticity} we just have to prove that $D_{\phi_i}$ is invertible for all $i\in \{1,\dots,k\}$.
	Let us fix such an  $i\in\{1,\dots,k\} $.
	From the assumption and Lemma \ref{spin-strata} we know that the fibers of $H_i\xrightarrow{\phi_i} S_i$
	 are spin and that $D_{\phi_i}\equiv \slashed{D}_{\phi_i}$, a family of spin Dirac operators. Thus we can apply the 
	  Schr\"odinger-Lichnerowicz-Weizenbock formula and obtain that $(\slashed{D}_{\phi_i})_{s}$ is invertible for each $s\in S_i$.
	 The proposition is proved.
\end{proof}

\section{Primary and secondary K-theory classes associated to a spin Dirac operator}\label{sect:dirac-k}
\subsection{The fundamental class associated to a fully elliptic Dirac operator}
In this subsection we want to define the K-homology class associated to a fully elliptic spin Dirac operator $\slashed{D}$; 
we do this by producing a class in $K_* (C^*_r(T^{NC} X))$. To this end we shall consider the Dirac operator $\slashed{D}_{ad}$
associated to the adiabatic deformation groupoid $(G_{{\rm \Phi}})^{[0,1]}_{ad}$, see Remark \ref{dirac-on-groupoids}.
In fact, we shall work directly in the $\Gamma$-equivariant case and denote the resulting $\Gamma$-equivariant Dirac operator
by $\slashed{D}^\Gamma$. As usual we employ the simple notation $G\rightrightarrows X$ for the groupoid
$G^\Gamma_{{\rm \Phi}}$. The adiabatic deformation is therefore denoted, as usual, as $G^{[0,1]}_{ad}$.

We shall take the bounded transform of our operator, $\psi (\slashed{D}^\Gamma)$, with $\psi(x)=x/\sqrt{1+x^2}$.
This is not an element in $\Psi^0_c (G)$ but rather in an extended version of it, according to Vassout
\cite{Vassout-jfa}: one adds to $\Psi^0_c (G)$ an algebra of "smoothing operators" which is holomorphically 
closed and obtains in this way an algebra which is contained densely in $\overline{\Psi^0_c (G)}$. We refer to
\cite[Section 4]{Vassout-jfa} for the details.
According to one of the main results in \cite{Vassout-jfa}, this algebra contains the bounded transform $\psi (\slashed{D}\Gamma)$. In particular,
 we see that  $\psi (\slashed{D}^\Gamma)\in 
\overline{\Psi^0_c (G)}$. Similarly, we have that  $\psi (\slashed{D}^\Gamma_{ad})\in 
\overline{\Psi^0_c (G^{[0,1]}_{ad})}$. 

In order to define a class in $K_* (C^*_r (T^{NC} X))$ we would like to proceed as in 
Subsection \ref{subsect:nctb-ncs}. Thus we would like to produce a class in the K-theory of $C^*_r (G_{ad}^{[0,1],F})$;
notice however that the natural candidate, namely   $\psi (\slashed{D}^\Gamma_{ad})$, does not produce a KK-cycle
which is {\em degenerate} on  $\partial X\times \{1\}$, see   Subsection \ref{subsect:k-classes-general}; indeed the restriction
of $\psi (\slashed{D}^\Gamma_{ad})^2 -\Id$ to $\partial X\times \{1\}$, i.e. $\psi (N (\slashed{D}^\Gamma) )^2 - \Id$, is not equal to 0. 
To fix this problem we shall now homotope $\psi (\slashed{D}^\Gamma)$ to an operator that has this property.
Recall the fully elliptic symbol $\sigma_{f.e.}$ and the short exact sequence \eqref{f.e.extention}
\begin{equation*}
\xymatrix{
	0\ar[r]& C^*_r(\mathring{X}_\Gamma\times_\Gamma\mathring{X}_\Gamma)\ar[r] & \overline{\Psi^0_c(G)}\ar[r]^(.6){\sigma_{f.e.}}&  \Sigma\ar[r]& 0
}.
\end{equation*}
where we recall that  $\Sigma:=\overline{\Psi^0_c(\partial G)}\underset{\partial X}{\times} C(\mathfrak{S}^* G)$.
Now observe that $\sigma_{f.e.}(\psi (\slashed{D}^\Gamma))=\psi (\sigma_{f.e.}(\slashed{D}^\Gamma))$. Moreover consider the family of functions $\psi_s(t)=\psi(t/(1-s))$ for $s\in[0,1]$ and observe that $\psi_1(t)=\rm{sign}(t)$.
Since $\slashed{D}^\Gamma$ is fully elliptic, $\psi (\sigma_{f.e.}(\slashed{D}^\Gamma))$ is invertible and 
$\psi_s (\sigma_{f.e.}(\slashed{D}^\Gamma))$ is a homotopy through invertible elements  with the property that $\psi_1 (\sigma_{f.e.}(\slashed{D}^\Gamma))$ is well defined and equal to the sign of the full symbol of $\slashed{D}^\Gamma$.
By the surjecivity of ${\sigma_{f.e.}}$ we can lift $\psi_s (\sigma_{f.e.}(\slashed{D}^\Gamma))$ to a path $P_s$ of elliptic operators in $\overline{\Psi^0_c(G)}$ such that $P_0= \psi (\slashed{D}^\Gamma)$ and $P_1$ is a fully elliptic operator with normal operator $N(P_1)= \rm{sign}(N(\slashed{D}^\Gamma))$. Observe in particular that the restriction to $\pa X$ of $P_1^2 - \Id$ is equal to 0.

Now, up to the reparametrization of the adiabatic deformation from $[0,1]$ to $[0,1/2]$
 and of the homotopy $P_s$ from $[0,1]$ to $[1/2,1]$, the concatenation of these two elements gives an operator on the adiabatic deformation such that the restriction to $\partial X\times \{1\}$ defines a degenarate cycle. As already explained in Subsection \ref{subsect:nctb-ncs}, this provides us with a class $[\slashed{D}_{ad}^F]$ in the K-theory of $C^*_r (G_{ad}^{[0,1],F})$.
  
 By restricting this operator to $\mathring{X}\times {0}\cup \partial X\times[0,1)$ we obtain the desired class $[\sigma_{nc}(\slashed{D}^\Gamma)]$ in  $K_* (C^*_r (T^{NC} X))$, $*=\dim X$.
In the even dimensional case we are of course using the well-known fact that the spinor bundle is $\bbZ_2$-graded and 
the Dirac operator is odd with respect
to this grading. Since $\psi(x)=x/\sqrt{1+x^2}$ is an odd function, we also have that $\psi(\slashed{D}^\Gamma)$ is odd.
Hence, it is easy to see that the Kasparov definition of {\em even} bimodules applies directly, which is why 
$[\sigma_{nc}(\slashed{D}^\Gamma)]\in K_0 (C^*_r (T^{NC} X))$ if $X$ is even dimensional.

 \begin{remark}
Following closely  \cite{DLR} one can check that the Poincar\'e dual of $[\sigma_{nc}(\slashed{D}^\Gamma)]\in K_* (C^*_r (T^{NC} X))$ in $K^\Gamma_* ({}^{\textrm{S}}X_\Gamma)$ is equal to the class defined,  as in Subsection  \ref{subsect:fclass}, by $P_1$.
\footnote{here we
are implicitly extending the reasoning explained in  Subsection \ref{subsect:fclass} to the
closure of $\Psi^0_c (G)$; this extension does not pose any problem.}

 \end{remark}


\subsection{The index class of a fully elliptic Dirac operator as an obstruction}

Recall Definition \ref{index-homo-nc}; following the notations of the previous subsection,
the index class $\operatorname{Ind}^{ad}(\slashed{D})$ is given by the evaluation at 1 of the class
$[\slashed{D}_{ad}^F]$. This class in $KK^*(\bbC,C^*_r(\mathring{X}_\Gamma\times_\Gamma \mathring{X}_\Gamma))$ is represented by the operator $P_1$, also defined in the previous subsection.

\begin{proposition}
If the $\Gamma$-equivariant metric $g_{{\rm \Phi}}$ has positive scalar curvature evereywhere, then $\operatorname{Ind}^{ad}(\slashed{D})$ vanishes.
\end{proposition}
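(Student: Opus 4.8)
The plan is to reduce the statement to the $L^2$-invertibility of $\slashed{D}^\Gamma$ and then to observe that an invertible operator carries a vanishing index class. In one line: positive scalar curvature makes $\slashed{D}^\Gamma$ invertible as a regular operator on the Hilbert module $\mathcal{E}$, and the Kasparov cycle representing $\operatorname{Ind}^{{\rm ad}}(\slashed{D})$ then becomes degenerate.

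\emph{Step 1: invertibility via Lichnerowicz.} I would apply the Schr\"odinger--Lichnerowicz--Weitzenb\"ock formula fibrewise to the groupoid spin Dirac operator $\slashed{D}^\Gamma\in\Diff^1(G^\Gamma_{{\rm \Phi}};r^*\slashed{S})$, obtaining
\begin{equation*}
(\slashed{D}^\Gamma)^2=\nabla^*\nabla+\tfrac14\,\scal(g_{{\rm \Phi}}),
\end{equation*}
where $\nabla$ is the $s$-fibrewise Levi-Civita connection on $r^*\slashed{S}$ determined by $g_{{\rm \Phi}}$ and $\scal(g_{{\rm \Phi}})$ is the $r$-pull-back of the scalar curvature of $g_{{\rm \Phi}}$. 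Because $g_{{\rm \Phi}}$ extends to a smooth bundle metric on the compact space $X$ through ${}^{{\rm \Phi}}TX\to X$, its scalar curvature is a smooth function on $X$; the hypothesis that it is positive everywhere, together with the compactness of $X$, yields a uniform gap $\scal(g_{{\rm \Phi}})\ge c>0$. Since $\nabla^*\nabla\ge0$ for the $C^*_r(G^\Gamma_{{\rm \Phi}})$-valued inner product, the formula gives $(\slashed{D}^\Gamma)^2\ge\tfrac{c}{4}\,\Id$ on the core domain, uniformly over the $s$-fibres, and hence $\slashed{D}^\Gamma$ is invertible as an unbounded regular operator on $\mathcal{E}$, with spectrum bounded away from $0$. (This invertibility also re-confirms full ellipticity, since invertibility of the whole operator forces invertibility of each normal family $\sigma_{\partial_i}(\slashed{D})$.)

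\emph{Step 2: vanishing of the index class.} I recall from the construction of the previous subsection that $\operatorname{Ind}^{{\rm ad}}(\slashed{D})$ is represented by the Kasparov cycle $(\mathcal{E},P_1)$ over $C^*_r(\mathring{X}_\Gamma\times_\Gamma\mathring{X}_\Gamma)$, where $P_1$ is the endpoint of the homotopy obtained by lifting $\psi_s(\sigma_{f.e.}(\slashed{D}^\Gamma))$, with $\psi_s(t)=\psi(t/(1-s))$ and $\psi_1=\sign$. When $\slashed{D}^\Gamma$ is invertible the spectrum avoids $0$, so the functional calculus path $s\mapsto\psi_s(\slashed{D}^\Gamma)$ is a norm-continuous homotopy of bounded self-adjoint multipliers from $\psi(\slashed{D}^\Gamma)$ to $\sign(\slashed{D}^\Gamma)$; I may therefore take $P_1=\sign(\slashed{D}^\Gamma)$. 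This $P_1$ satisfies $P_1=P_1^*$ and $P_1^2=\Id$ \emph{everywhere} (not merely over $\partial X$), so $(\mathcal{E},P_1)$ is a degenerate cycle and represents $0$ in $K_*(C^*_r(\mathring{X}_\Gamma\times_\Gamma\mathring{X}_\Gamma))\simeq K_*(C^*_r\Gamma)$. Hence $\operatorname{Ind}^{{\rm ad}}(\slashed{D})=0$. (Equivalently, via Theorem \ref{equality-index-classes} one has $\operatorname{Ind}^{{\rm ad}}(\slashed{D})=\Ind_\Gamma(\slashed{D})$, and the Connes--Skandalis projector of an invertible operator is trivial.)

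\emph{Main obstacle.} The delicate point is to make the positivity estimate of Step 1 rigorous at the level of the Hilbert module: one must verify that the Weitzenb\"ock identity and the inequality $(\slashed{D}^\Gamma)^2\ge\tfrac{c}{4}\Id$ hold uniformly across all $s$-fibres and descend to a genuine spectral gap for the regular operator on $\mathcal{E}$. This is exactly where the compactness of $X$ and the rigid product structure of $g_{{\rm \Phi}}$ near the fibered corners enter. Once the uniform gap at $0$ is secured, the reduction to a degenerate Kasparov cycle in Step 2 is formal.
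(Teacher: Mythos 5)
Your proposal is correct and follows essentially the same route as the paper's proof: the Schr\"odinger--Lichnerowicz argument gives invertibility of $\slashed{D}$, which makes the functional-calculus path $P_s=\psi_s(\slashed{D})$ well defined up to $s=1$, and $P_1=\operatorname{sign}(\slashed{D})$ then yields a degenerate Kasparov cycle over $C^*_r(\mathring{X}_\Gamma\times_\Gamma\mathring{X}_\Gamma)$, so $\operatorname{Ind}^{\mathrm{ad}}(\slashed{D})=0$. The extra detail you supply on the uniform spectral gap is exactly what the paper leaves implicit, but it is the same argument.
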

\begin{proof}
By the Schr\"odinger-Lichnerowicz formula, we know that $\slashed{D}$ is invertible.
Following the notations of the previous section, we can define $P_s$ as the path $\psi_s(\slashed{D})$, for $s\in[0,1]$;
this is well defined up to $s=1$ since
$\slashed{D}$ is invertible. Moreover we have that $P_1={\rm sign}(\slashed{D})$, which induces a degenerated cycle over $C^*_r(\mathring{X}_\Gamma\times_\Gamma \mathring{X}_\Gamma)$. This completes the proof.
\end{proof}
     We see the above result as an obstruction result: if the vertical metrics on the fibrations, which are assumed to be
     of positive scalar curvature, can be extended to a fibered corners metric of positive scalar curvature, then 
      $\operatorname{Ind}^{ad}(\slashed{D})$ vanishes. 
      
      \subsection{The rho-class of an invertible Dirac operator and its properties.}$\;$\\
We assume that the $\Gamma$-equivariant fibered corners metric $g_{{\rm \Phi}}$ has positive scalar curvature everywhere; as already
remarked we then have that the operator $\slashed{D}$ is invertible.

Consider the path $P_s:=\psi_s(\slashed{D})$ defined in the previous subsection and observe that, after the usual reparametrization, the concatenation of 
$\psi(\slashed{D}_{ad})$  and $P_s$ defines a $(\C, C^*_r (G^{[0,1]}_{ad}))$-bimodule that, when we restrict it to the adiabatic deformation parameter $t=1$, is degenerate on the whole manifold $X$.

\begin{definition}
The rho class associated to the metric $g_{{\rm \Phi}}$ is the class $\rho(g_{{\rm \Phi}})\in KK(\C, C^*_r (G^{[0,1)}_{ad}))$
defined by  concatenation of 
$\psi(\slashed{D}_{ad})$  and $P_s$.
\end{definition}

We shall now study the 
stability properties of this class. Let 
$\mathcal{R}_{{\rm \Phi}}^+ (X)$ be the set of ${\rm \Phi}$-metrics on $X$ that are of positive scalar curvature.
	Recall that  two ${\rm \Phi}$-metrics   $g_0$ and $g_1$ are concordant if there exists a
	positive scalar curvature ${\rm \Phi}$-metric  $g$  on $X\times[0,1]$, of product type near the boundary, 
	such that the restriction of $g$ to $X\times\{i\}$ is equal to $g_i$ (for $i=0,1$).
	We denote by $\widetilde{\pi}_0 (\mathcal{R}_{{\rm \Phi}}^+ (X))$ the set of concordance classes of 
	psc ${\rm \Phi}$-metrics. We denote by $\pi_0 (\mathcal{R}_{{\rm \Phi}}^+ (X))$ the connected components
	of $\mathcal{R}_{{\rm \Phi}}^+ (X)$.
	
	\begin{proposition}
		The application $\rho\colon\widetilde{\pi}_0 (\mathcal{R}_{{\rm \Phi}}^+ (X)) \to K_* ( C^*_r ( G_{ad}^{[0,1)}))$
		given by
		\[
		[g]\mapsto \rho(g)
		\]
		 is well defined.\\ Similarly, the rho class gives a well defined 
		 application $\rho\colon \pi_0 (\mathcal{R}_{{\rm \Phi}}^+ (X)) \to K_*( C^*_r ( G_{ad}^{[0,1)}) )$.
	\end{proposition}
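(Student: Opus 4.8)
The plan is to derive both assertions from the delocalized APS index theorem, Theorem~\ref{deloc-aps}, applied to the spin Dirac operator of a positive scalar curvature ${\rm \Phi}$-metric on the cylinder $X\times[0,1]$.

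First I would establish concordance invariance. Suppose $g_0,g_1\in\mathcal{R}_{{\rm \Phi}}^+ (X)$ are concordant, and let $g$ be a psc ${\rm \Phi}$-metric on $X\times[0,1]$, of product type near $X\times\{0,1\}$, with $g|_{X\times\{i\}}=g_i$. Regarding $X\times[0,1]$ as a manifold with fibered corners and forming the groupoid $\mathcal{G}=G\times b[0,1]\rightrightarrows X\times[0,1]$ of Theorem~\ref{deloc-aps}, the metric $g$ determines a $\mathcal{G}$-Dirac operator $\slashed{\mathcal{D}}$; I set $A:=\psi(\slashed{\mathcal{D}})$. Because $g$ is of product type near the two ends, the normal operators of $A$ at the boundary hypersurfaces $X\times\{i\}$ take exactly the required form $A_i+\partial$, where $A_i$ is the bounded transform of $\slashed{D}_{g_i}\in\Psi^0_c(G)$ and $\partial$ is the translation-invariant generator of $KK^1(\C,C_0(\R))$ coming from the $b$-structure of $[0,1]$; moreover $\rho(A_i)$ is, by construction, the class $\rho(g_i)\in KK(\C, C^*_r(G^{[0,1)}_{ad}))$. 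Since each $g_i$ is psc, the Schr\"odinger--Lichnerowicz formula makes $A_i$ invertible, so the hypotheses of Theorem~\ref{deloc-aps} are met.

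Next I would check that $\operatorname{Ind}^{ad}(A)=0$. As $g$ is psc on all of $X\times[0,1]$, the Schr\"odinger--Lichnerowicz formula shows $\slashed{\mathcal{D}}$ is invertible on the interior, and exactly as in the Proposition above (psc implies vanishing of the adiabatic index) one obtains $\operatorname{Ind}^{ad}(A)=0$. Theorem~\ref{deloc-aps} then yields $\rho(g_0)=\rho(A_0)=\rho(A_1)=\rho(g_1)$, so $\rho$ descends to $\widetilde{\pi}_0(\mathcal{R}_{{\rm \Phi}}^+ (X))$. For $\pi_0$, if $g_0$ and $g_1$ lie in a single path component of $\mathcal{R}_{{\rm \Phi}}^+ (X)$ I would invoke the standard fact that isotopic psc metrics are concordant: stretching a path $g_t$ slowly along $[0,1]$ (the Gromov--Lawson reparametrization) controls the $t$-derivatives of the scalar curvature and produces a psc concordance of product type near the ends. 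Hence path-equivalent metrics are concordant, and well-definedness on $\pi_0$ follows from the concordance case.

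The hard part will be the bookkeeping in the first step: verifying that the Dirac operator of the concordance metric, seen through $G\times b[0,1]$, has its boundary normal operators in precisely the shape $A_i+\partial$ required by Theorem~\ref{deloc-aps}, and that $\rho(A_i)$ is genuinely identified with the rho class $\rho(g_i)$ defined via $G^{[0,1)}_{ad}$. One must also ensure that the concordance can be chosen to be a rigid ${\rm \Phi}$-metric on the fibered-corners cylinder, so that the product structure near the ends is compatible with the iterated fibration structure and all of the ${\rm \Phi}$-calculus applies; this compatibility is exactly what licenses the identification of the normal operators and the appearance of the factor $\partial$.
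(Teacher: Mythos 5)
Your proposal is correct and takes essentially the same approach as the paper: the paper's entire proof is the single sentence ``This is a direct application of the delocalized APS index theorem stated in Theorem~\ref{deloc-aps}'', and your argument simply spells out that application (Dirac operator of the concordance metric on the cylinder groupoid $G\times b[0,1]$, normal operators of the form $A_i+\partial$, vanishing of $\operatorname{Ind}^{ad}(A)$ via Schr\"odinger--Lichnerowicz, and the standard isotopy-implies-concordance reduction for the $\pi_0$ statement). The bounded-transform and rigidity bookkeeping you flag as ``the hard part'' is precisely the detail the paper also leaves implicit.
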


	\begin{proof}
	This is a direct application of the delocalized APS index theorem stated in Theorem \ref{deloc-aps}.
			\end{proof}



\section{Singular foliations}\label{sing-fol}

\subsection{Blow-up constructions for Lie groupoids}

Following the recent work of Debord and Skandalis, see \cite{DSk}, we shall now reobtain the groupoid associated 
to a manifold with fibered boundary and to the ${\rm \Phi}$-tangent bundle, as a blow-up construction in the
groupoid context.

First, we define the deformation to the normal cone.
Let $Y$ be a smooth compact manifold and let $X$ be a submanifold of $Y$.
The deformation to the normal cone $DNC(Y,X)$ is obtained by gluing $N_X^Y \times \{0\}$ with $Y\times \bbR^*$,
where $N_X^Y$ denoted the normal bundle of $X$ in $Y$.
The smooth structure of $DNC(Y,X)$ is described by use of any exponential map $\theta : U^\prime \to U$ which is a diffeomorphism from an open neighborhood  $U^\prime$ of the 0-section in $N_X^Y$ to an open neighborhood $U$ of $X$. 
See \cite[Section 4.1]{} for the details. 
\\
The group $\bbR^*$ acts on $DNC(Y,X)$ by 
$$\lambda\cdot ((x,\xi),0)=((x,\lambda^{-1}\xi),0) \,,\quad 
\lambda\cdot (y,t)=(y,\lambda t) \;\;\text{with}\;\;(x,\xi)\in N_X^Y\,,(y,t)\in Y\times \bbR^*\,.
$$
Given a commutative diagram of smooth maps 
\[
\xymatrix{X\ar@{^{(}->}[r]\ar[d]^f& Y\ar[d]^f\\
	X^\prime\ar@{^{(}->}[r]& Y^\prime}
\]
where the horizontal arrows are inclusions of submanifolds, we naturally obtain a smooth map
$DNC(f)\colon DNC(Y,X) \to DNC(Y',X')$. This map is defined by $DNC(f)(y,\lambda) = (f (y),\lambda)$
for $y\in Y$ and $\lambda\in\bbR$ and $DNC(f)(x,\xi,0) = (f(x),f_N(\xi),0)$ for $(x,\xi) \in N_X^Y$, where $f_N \colon N_X^Y \to N_{X'}^{Y'}$ is the linear map induced by the differential $df$. This map is  equivariant with respect to the action of $\bbR^*$.

The action of $\bbR^*$ is free and locally proper on $DNC(Y,X)\setminus X\times\bbR$ and we define $Blup(Y,X)$ as the quotient space of this action.

If $H\rightrightarrows H^{(0)}$ is a closed subgroupoid of a Lie groupoid $G\rightrightarrows G^{(0)}$, then 
$DNC(G,H)$ is a Lie groupoid over $DNC(G^{(0)},H^{(0)})$ where the source and target maps are simply given by
$DNC (s)$ and $DNC(r)$ as defined above.

\begin{example}\label{DNC}
	Let $H=G^{(0)}\rightrightarrows G^{(0)}$ be the trivial groupoid, then $N_H^G$ is just $\mathfrak{A}G$, the Lie algebroid of $G$, and  
	$DNC^+(G,H)$ is the adiabatic deformation groupoid $G^{[0,+\infty)}_{ad}$. 
\end{example}

On the other hand,  $Blup(G,H)$ is not a Lie groupoid over $Blup(G^{(0)},H^{(0)})$, since the $Blup$ construction is not functorial.
But $Blup(G,H)$ contains the dense open subset $$Blup_{r,s}(G,H):=\left(DNC(G,H)\setminus (H\times\bbR\cup DNC(s)^{-1}(H^{(0)}\times\bbR)\cup DNC(r)^{-1}(H^{(0)}\times\bbR))\right)/ \bbR^*$$ that is a Lie groupoid over $Blup(G^{(0)},H^{(0)})$.

We shall be also interested in a variant of this construction: we consider $DNC(G,H)\rightrightarrows  DNC(G^{(0)},H^{(0)})$ 
and define $DNC^+ (G,H)$ as its restriction to $(N^{G^{(0)}}_{H^{(0)}})^+ \times \{0\} \cup G^{(0)}\times \bbR^*_+$
with $(N^{G^{(0)}}_{H^{(0)}})^+$ denoting the positive normal bundle \footnote{where, for $h\in H^{(0)}$,
	$(N^{G^{(0)}}_{H^{(0)}})^+_h$ is defined by $(\bbR^n)_+:= \bbR^n_+$ once we fix a linear isomorphism 
	$(N^{G^{(0)}}_{H^{(0)}})_h$ with  $\bbR^n$}.
We also define $Blup^+(G,H)$ as the quotient of $DNC^+ (G,H)\setminus H\times \bbR_+$ by the action 
of $\bbR^*_+$. We obtain in this way the groupoid $$Blup^+_{r,s}(G,H) \rightrightarrows 
Blup^+ (G^{(0)},H^{(0)}).$$

\subsection{Revisiting the groupoid associated to a manifold with  fibered boundary}$\;$\\
Let now $M$ be a smooth manifold with fibred boundary  $\partial M\xrightarrow{\phi} B$.  For example, $M$ is the
resolution of a depth-1 stratified pseudomanifold ${}^{\textrm{S}} M$ with singular stratum $B$.\\
For simplicity we assume that
$\partial M$ is connected.\\ Our first goal is to recover in this groupoid context the $b$-stretched product appearing in
the definition of the $b$-calculus, see \cite{Melrose}.
Consider  $G:=M\times M\rightrightarrows M$ and $H:=\partial M\times \partial M \rightrightarrows \partial M$.
Our present goal is to explicitly describe the groupoid
$$Blup^+_{r,s}(G,H) \rightrightarrows Blup^+ (G^{(0)},H^{(0)}).$$
We  have, by definition,
$$DNC (G,H)= \partial M \times \partial M\times \bm{\mathrm{R}}^2 \times \{0\}\cup M\times M\times \bbR^* \,,$$ 
and
$$DNC  (G^{(0)},H^{(0)}) = \partial M \times \bm{\mathrm{R}}\times \{0\} \cup M\times \bbR^*$$ 
where we use the bold face to distinguish the fibers of the normal bundle.
The groupoid structure of 
$DNC (G,H)$ 
is explicitly given by the following:
\begin{itemize}
	\item $s ((x,y),(n_1,n_2),0)= (y,n_2,0)$, $r ((x,y),(n_1,n_2),0)=(x,n_1,0)$; \\$s((x,y),t)=(y,t)$,  $r((x,y),t)=(x,t)$;
	\item $m (((x,y),(n_1,n_2),0), ((y,z) (n_2,n_3),0))= ((x,z),(n_1,n_3),0)$\\
	$m(((x,y),t),((y,z),t))= ((x,z),t)$.
\end{itemize}
This gives also the groupoid structure for $DNC^+ (G,H)$ where, by definition,
$DNC^+ (G,H)$ is the restriction of $DNC (G,H)$ to $DNC^+  (G^{(0)},H^{(0)})$,
which is  $\partial M \times \bm{\mathrm{R}}^+\times \{0\} \cup M\times \bbR^*_+$;
thus 
$$DNC^+ (G,H)= \partial M \times \partial M\times \bm{\mathrm{R}}^2_+ \times \{0\}\cup M\times M\times \bbR^*_+ \,,$$ 
Now, 
$$Blup^+(G,H):= \left( DNC^+ (G,H)\setminus \left(\partial M \times \partial M\times \{\underline{0}\}\times\{0\} 
\cup  \partial M \times \partial M\times\bbR^*_+ \right)\right)/\bbR^*_+
$$
and hence we see  that
$$Blup^+(G,H)= \partial M\times \partial M \times \bm{\mathrm{S}}^1_{+}\cup (M\times M\setminus \partial M\times \partial M)$$
which is the b-stretched product of Melrose, denoted $[M\times M;\pa M\times \pa M]$ \footnote{Following the definitions one can prove that this identification
	gives a  diffeomorphism between $Blup^+(G,H)$ and  $[M\times M;\pa M\times \pa M]$}.

Finally, observing that $\partial M \times \partial M\times \{\underline{0}\}\times\{0\} 
\cup  \partial M \times \partial M\times\bbR^*_+$ is nothing but $H\times \bbR_+$, we obtain
$Blup^+_{r,s}(G,H)$ by removing from $Blup^+(G,H)$ the image through the quotient map of the subspaces
$$\partial M \times \partial M\times (\{\underline{0}\}\times\bm{\mathrm{R}}_+)\times\{0\}
\cup H\times M\times \bbR_+^* \subset DNC^+(G,H)\setminus H\times \bbR_+ $$
$$
\partial M \times \partial M\times (\bm{\mathrm{R}}_+\times \{\underline{0}\})\times\{0\} \cup
M\times H\times \bbR_+^*\subset DNC^+(G,H)\setminus H\times \bbR_+ $$ (these are the points where 
$$DNC(s): DNC^+(G,H)\setminus H\times \bbR_+ \to DNC^+ (G^{(0)},H^{(0)})$$ and 
$$DNC(r): DNC^+(G,H)\setminus H\times \bbR_+ \to DNC^+ (G^{(0)},H^{(0)})$$  are not defined). 
We see therefore that
$$Blup_{r,s}^+(G,H)= \partial M\times \partial M \times \mathring{\bm{\mathrm{S}}}^1_{+}\cup \mathring{M}\times \mathring{M}
$$
This gives an identification, in the category of smooth manifold with corners, of 
$Blup_{r,s}^+(G,H)$ with the b-stretched product of Melrose with the left and right boundaries removed.

\begin{remark}
	Observe that
	the closed subgroupoid $\partial M\times \partial M \times \mathring{\bm{\mathrm{S}}}^1_{+}$ of $Blup_{r,s}^+(G,H)$ is isomorphic to 
	$\partial M\times \partial M \times \bbR$, that is just the product of the pair groupoid with the additive group $\RR$, through the isomorphism of Lie groupoids   given by
	$$(x,y,[t,s])\mapsto(x,y,\log(t)-\log(s)).$$
	Observe also that the product of $(x,y,[t,s])$ and $(y,z,[u,v])$, that is $(x,z,[tu,vs])$, is sent to
	$$(x,z, \log(tu)-\log(sv))=(x,z, \log(t)+\log(u)-\log(s)-\log(v))=(x,y,\log(t)-\log(s))\cdot(y,z,\log(u)-\log(v)).$$
	
	In this way we recover the Lie groupoid structure that corresponds to the translation invariance of the indicial operator of a b-operator, which is the restriction of the Schwartz kernel of the operator to the front face of the b-streched product of Melrose.\\
\end{remark}

Next, we want to recover through this construction  the ${\rm \Phi}$-double space and the associated groupoid, as explained
in Section \ref{phi-grpd}.
To this end we consider what we have got so far, namely 
$$Blup^+(M\times M,\partial M\times \partial M)\,,\quad Blup_{r,s}^+(M\times M,\partial M\times \partial M)\rightrightarrows M\,,$$
Notice that $\partial M\times_B \partial M \rightrightarrows M$ is a Lie subgroupoid of $Blup_{r,s}^+(M\times M,\partial M\times \partial M)$.  We can first consider 
$$Blup^+ (Blup^+ (M\times M,\partial M\times \partial M), \partial M\times_B \partial M);$$
with arguments similar to those given above, one can see without diffuculty that this space is the ${\rm \Phi}$-double space
considered in Subsection \ref{phi-calculus}.
Let us investigate what happens on the boundary: the inward normal space of $\partial M\times_B \partial M$ in the b-calculus groupoid is 
$$\partial M\times_B \partial M\times_B TB\times\bm{R}\times \bm{R}_+\rightrightarrows B\times \bm{R}_+ $$
where the structure morphisms are given by:
\begin{itemize}
	\item the source map  $s_N(x,y,\xi, u,v)=(y,v)$ and the range map $r_N(x,y,\xi, u,v)=(x,v)$;
	\item  the multiplication $m((x,y,\xi, u,v),(y,z,\eta, w,v))=(x,y,\xi+\eta,u+w,v)$.
\end{itemize} 

Now, as before, we remove the preimage by $s_N$ and $r_N$ of $M\times_B M\times\{0\}$ and we quotient by the action of $\RR^*_+$. 
If we consider the representative elements of the classes in this quotient with $v=1$, then this groupoid is clearly given by $\partial M\times_B TB\times_B\partial M\times \RR$. This leads to the isomorphism  $$Blup_{r,s}^+ (Blup_{r,s}^+ (M\times M,\partial M\times \partial M), \partial M\times_B \partial M)\cong G_{{\rm \Phi}}.$$ 


\begin{remark}
	If $M_\Gamma$ is $\Gamma$ covering of $M$ which is also a manifold with fibered boundary
	$\partial M_\Gamma\to B_\Gamma$, with the $\Gamma$-action satisfying \eqref{compatible-bfs}, then
	we can re-obtain the groupoid  $G^\Gamma_{{\rm \Phi}}$: this is the groupoid
	$$Blup^+_{r,s} \left(Blup^+_{r,s} (M_\Gamma\times M_\Gamma,\partial M_\Gamma\times \partial M_\Gamma), \partial M_\Gamma\times_{B_\Gamma} \partial M_\Gamma\right)/\Gamma.$$
\end{remark}

\subsection{The groupoid associated to a manifold with foliated boundary.}$\;$\\
Keeping with the general philosophy that treating a problem with groupoids solves,
with minor work, a number
of specific geometric problems, we now move to  more singular situations. 
As already observed, when $M$ is a manifold with fibered boundary $\pa M\xrightarrow{\phi} B$, the groupoid 
$Blup^+_{r,s} (Blup^+_{r,s} (M\times M,\partial M\times \partial M), \partial M\times_B \partial M)$, {\it id est}, up to isomorphism,
the groupoid 
$$G_{{\rm \Phi}} = \mathring{M}\times\mathring{M}\cup \partial M\times_B TB\times_B\partial M\times \RR\,,$$
integrates the algebroid ${}^{{\rm \Phi}} TM\to M$ defined by the Lie algebra of fibered boundary vector fields on $M$:
\begin{equation*}
\mathcal{V}_{{\rm \Phi}} (M)=\{\xi\in \mathcal{V}_b (M)\;\;\xi |_{\partial M}\,,\;\;\text{is tangent to the fibers of }\;
\phi:  \pa M\xrightarrow{\phi} B  \text{ and } \xi x\in x^2 C^\infty (M)\}
\end{equation*}
Assume now that  $\pa M$ is foliated 
by $\mathcal{F}$ and 
consider the $\mathcal{F}$-tangent bundle, ${}^\mathcal{F} TM$, defined through the Serre-Swan
theorem starting from the Lie algebra of vector fields
\begin{equation*}
\mathcal{V}_{\mathcal{F}} (M)=\{\xi\in \mathcal{V}_b (M)\;\;\xi |_{\partial M} \in C^\infty (\partial M,T\mathcal{F})\;\; \text{ and } \xi x\in x^2 C^\infty (M)\}
\end{equation*}
See \cite{rochon-foliated}. This defines, an in the previous case, an integrable algebroid and we shall now present a groupoid that integrates it.\\
To this end we first observe that the blow-up construction that we have briefly explained in the previous section
works equally well if $H\rightrightarrows H^{(0)}$ is a Lie groupoid with a (possibly non-injective) immersion $\iota$ into 
$G\rightrightarrows G^{(0)}$; we require the additional property that $\iota$ induces an embedding $H^{(0)}\rightarrow G^{(0)}$.
See \cite[Remark 3.19]{hs-morphismes} \footnote{For example, if $\iota:X\to Y$ is an immersion,
	then we consider 
	the deformation to the normal cone defined via the bundle $N_{X\xrightarrow{\iota} Y}$
	whose fiber at $x\in X$ is $T_{\iota(x)}Y/d\iota (T_x X)$.}. Notice that in this generality already the deformation
to the normal cone, denoted here $DNC^+ (H\xrightarrow{\iota} G)$, could be non-Hausdorff, even if $H$ and $G$ are Hausdorff; 
similarly, the groupoid $Blup_{r,s}^+(H\xrightarrow{\iota} G)\rightrightarrows Blup^+ (G^{(0)}, H^{(0)}) $
is such that $Blup_{r,s}^+(H\xrightarrow{\iota} G)$ is in general non-Hausdorff (whereas, from our additional hypothesis,
we know that $Blup^+ (G^{(0)}, H^{(0)})$ is Hausdorff, as it is required from the definition of  Lie groupoid).
Thanks to the work of Connes, see \cite{connes-survey},  one is nevertheless able to:
\begin{itemize}
	\item define the $C^*$-algebra of such a groupoid 
	\item  consider the associated pseudodifferential
	algebra.
\end{itemize}
We shall come back to these points momentarily.\\
Consider now the holonomy groupoid ${\rm Hol}(\partial M, \mathcal{F})\rightrightarrows \partial M$ of the foliated
manifold $(\partial M, \mathcal{F})$.

\begin{proposition}\label{prop:immersion}
	Let $X$ be a smooth manifold and 
	let $\mathcal{F}$ and $\mathcal{F}'$ be two foliations such that $\mathcal{F}\subset\mathcal{F}'$, then
	there exists an immersion of Lie groupoids
	\[
	\iota\colon{\rm Hol}(X, \mathcal{F})\to {\rm Hol}(X, \mathcal{F}').
	\]
\end{proposition}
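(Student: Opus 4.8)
The plan is to define $\iota$ on arrows by the tautological assignment $[\gamma]_{\mathcal{F}}\mapsto[\gamma]_{\mathcal{F}'}$ and then to verify, in order, that it is well defined, that it is a morphism of groupoids which restricts to the identity on the unit space $X$, and finally that it is a smooth immersion. Since $\mathcal{F}\subset\mathcal{F}'$ means $T\mathcal{F}\subset T\mathcal{F}'$ as subbundles of $TX$, every path tangent to the leaves of $\mathcal{F}$ is in particular tangent to the leaves of $\mathcal{F}'$, so the assignment already makes sense at the level of leafwise paths; the only content is that it descends to holonomy classes.

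The key step is well-definedness, which amounts to the assertion that a leafwise $\mathcal{F}$-path with trivial $\mathcal{F}$-holonomy has trivial $\mathcal{F}'$-holonomy. First I would choose a foliation chart $(u,v,w)\in\mathbb{R}^{p}\times\mathbb{R}^{p'-p}\times\mathbb{R}^{n-p'}$, with $p=\dim\mathcal{F}$, $p'=\dim\mathcal{F}'$ and $n=\dim X$, adapted simultaneously to both foliations (such charts exist precisely because $\mathcal{F}\subset\mathcal{F}'$), in which the $\mathcal{F}$-plaques are $\{v,w\ \mathrm{const}\}$ and the $\mathcal{F}'$-plaques are $\{w\ \mathrm{const}\}$. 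A transversal to $\mathcal{F}$ is then parametrized by $(v,w)$ and a transversal to $\mathcal{F}'$ by $w$, with an obvious surjective submersion $\mathrm{pr}\colon(v,w)\mapsto w$. Because each $\mathcal{F}$-plaque lies inside an $\mathcal{F}'$-plaque, sliding along a path inside the leaves of $\mathcal{F}$ preserves the $\mathcal{F}'$-plaques, so the holonomy diffeomorphism $h^{\mathcal{F}}_{\gamma}$ of the $(v,w)$-transversal covers the holonomy diffeomorphism $h^{\mathcal{F}'}_{\gamma}$ of the $w$-transversal through $\mathrm{pr}$, i.e. $\mathrm{pr}\circ h^{\mathcal{F}}_{\gamma}=h^{\mathcal{F}'}_{\gamma}\circ\mathrm{pr}$. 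Since $\mathrm{pr}$ is a surjective submersion, $h^{\mathcal{F}}_{\gamma}=\mathrm{id}$ forces $h^{\mathcal{F}'}_{\gamma}=\mathrm{id}$, which is exactly what is needed. The groupoid-morphism properties (compatibility with $s$, $r$, units, inverses and the concatenation product) are then immediate from the definition, and $\iota$ is the identity on $X=H^{(0)}=G^{(0)}$, so in particular it induces an embedding of unit spaces.

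To see that $\iota$ is an immersion I would reduce to the $s$-fibers. Since $\iota$ is the identity on objects and intertwines the source maps, $d\iota$ carries $\ker ds^{H}$ into $\ker ds^{G}$ and covers the identity of $TX$ via $ds$; hence $d\iota$ is injective as soon as its restriction to each $s$-fiber is an immersion. The $s$-fiber of ${\rm Hol}(X,\mathcal{F})$ over $x$ is the holonomy cover of the $\mathcal{F}$-leaf $L_x$, and $\iota$ sends it to the holonomy cover of the $\mathcal{F}'$-leaf $L'_x\supset L_x$, fitting into a commutative square with the two \'etale holonomy projections over the leaf inclusion $L_x\hookrightarrow L'_x$. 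As the latter is an immersion (a leaf of the induced subfoliation $\mathcal{F}'|_{L'_x}$) and the vertical maps are local diffeomorphisms, the induced map of holonomy covers is an immersion; in the adapted chart above $\iota$ takes the explicit linear form $(u_1,u_2,v,w)\mapsto\big((u_1,v),(u_2,v),w\big)$, which also settles smoothness and exhibits $d\iota$ as injective.

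The main obstacle I expect is technical rather than conceptual: the holonomy groupoid carries a (possibly non-Hausdorff) smooth structure obtained by gluing the local models, and one must check that $\iota$ is globally smooth and an immersion with respect to this structure, not merely on individual $s$-fibers. The fact that the statement only claims a \emph{possibly non-injective} immersion is helpful here, since distinct $\mathcal{F}$-holonomy classes may well have the same $\mathcal{F}'$-holonomy, and no injectivity of $\iota$ needs to be established.
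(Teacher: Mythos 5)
Your proof is correct, and it constructs the same map as the paper --- on each $s$-fiber your tautological assignment $[\gamma]_{\mathcal{F}}\mapsto[\gamma]_{\mathcal{F}'}$ is exactly the paper's fiberwise map $\iota_x\colon\widetilde{L}_x\to\widetilde{L}'_x$ between holonomy covers --- but your justification of the crucial step is genuinely different. The paper argues fiber by fiber with covering space theory: it invokes the lifting criterion for $j_x\circ p_x\colon\widetilde{L}_x\to L'_x$, checks the required inclusion of subgroups of $\pi_1(L'_x,x)$ by comparing the two short exact sequences $1\to\pi_1(\widetilde{L},x)\to\pi_1(L,x)\to{\rm Hol}(L,x)\to1$, and justifies the induced arrow ${\rm Hol}(L_x,x)\to{\rm Hol}(L'_x,x)$ by asserting a product decomposition ${\rm Hol}(L_x,x)\simeq{\rm Hol}(L'_x,x)\times{\rm Hol}_{L'_x}(L_x,x)$. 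You prove the same essential statement (trivial $\mathcal{F}$-holonomy forces trivial $\mathcal{F}'$-holonomy) directly at the level of germs, via bi-adapted charts and the covering relation $\mathrm{pr}\circ h^{\mathcal{F}}_{\gamma}=h^{\mathcal{F}'}_{\gamma}\circ\mathrm{pr}$; this is more elementary (no $\pi_1$, no lifting criterion) and isolates precisely the geometric content behind the paper's splitting claim --- indeed your fibered relation is exactly what is needed, whereas the paper's direct-product decomposition is a stronger assertion than necessary. You also do more than the paper on the last step: where the paper says it is ``easy to check'' that the fiberwise maps assemble into a smooth immersion, you reduce injectivity of $d\iota$ to the $s$-fibers using $s^{G}\circ\iota=s^{H}$, handle the fibers by the commutative square of \'etale holonomy projections over the leaf inclusion, and exhibit the local linear form of $\iota$ in a chart. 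Two cosmetic points only: the subfoliation of $L'_x$ you invoke should be written $\mathcal{F}|_{L'_x}$, not $\mathcal{F}'|_{L'_x}$; and your explicit chart formula is stated near the units, so for arrows far from the units one should remark that the standard charts of the holonomy groupoid are bi-adapted charts at the two endpoints twisted by the holonomy transport of $\gamma$, in which $\iota$ has the same product form --- this is the same level of detail the paper itself omits.
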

\begin{proof}
	Let $x\in X$  and let $L_x$ and $L'_x$ be the leaves of $\mathcal{F}$ and $\mathcal{F}'$ respectively that pass through $x$.
	Let $p_x\colon\widetilde{L}_x\to L_x$ be the ${\rm Hol}(L_x,x)$-covering of $L_x$ and let
	$p'_x\colon\widetilde{L}'_x\to L'_x$ be the ${\rm Hol}(L'_x,x)$-covering of $L'_x$.
	We can lift the map 
	$j_x\circ p_x\colon\widetilde{L}_x\to L'_x$, where $j_x$ is the obvious inclusion,  to a map
	$\iota_x\colon\widetilde{L}_x\to\widetilde{L}'_x$ if 
	$$(j_x\circ p_x)_\sharp(\pi_1(\widetilde{L}_x,x))\subset (p'_x)_\sharp(\pi_1(\widetilde{L}'_x,x))$$
	inside $\pi_1(L'_x)$.
	But this is clear from the commutativity of the following diagram
	\[
	\xymatrix{
		1\ar[r]& \pi_1(\widetilde{L}_x,x)\ar[r]^{(p_x)_\sharp}\ar[d]&\pi_1(L_x,x)\ar[r]\ar[d]^{(j_x)_\sharp}& {\rm Hol}(L_x,x)\ar[r] \ar[d]&1\\
		1\ar[r]& \pi_1(\widetilde{L}'_x,x)\ar[r]^{(p'_x)_\sharp}&\pi_1(L'_x,x)\ar[r]& {\rm Hol}(L'_x,x)\ar[r] &1
	}
	\]
	Observe that the last vertical line is well defined since the group $$ {\rm Hol}(L_x,x)\simeq  {\rm Hol}(L'_x,x)\times  {\rm Hol}_{L'_x}(L_x,x),$$ where ${\rm Hol}_{L'_x}(L_x,x)$ is the holonomy group of the leaf $L_x$ inside $L'_x$; so the last vertical arrow is just the projection to the first factor.
	Now it is easy to check that the collection of $\iota_x\colon{\rm Hol}(X, \mathcal{F})_x\to {\rm Hol}(X, \mathcal{F}')_x $ gives a smooth map $\iota \colon{\rm Hol}(X, \mathcal{F})\to {\rm Hol}(X, \mathcal{F}')$
	and that it is an immersion.
\end{proof}

\begin{corollary}
	Let $(X,\mathcal{F})$ a smooth foliated manifold with $X$ compact.
	Then there exists an immersion of Lie groupoids
	\begin{equation*}
	\iota : {\rm Hol}(X, \mathcal{F}) \rightarrow X\times X
	\end{equation*}
	
\end{corollary}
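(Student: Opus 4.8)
The plan is to deduce this directly from Proposition \ref{prop:immersion}, by choosing $\mathcal{F}'$ to be the foliation of $X$ whose leaves are its connected components, i.e.\ the foliation with $T\mathcal{F}'=TX$. Since every leaf of $\mathcal{F}$ is connected, it is contained in a single connected component of $X$, so $T\mathcal{F}\subset T\mathcal{F}'$ and hence $\mathcal{F}\subset\mathcal{F}'$ in the sense required by the Proposition. Applying Proposition \ref{prop:immersion} therefore yields an immersion of Lie groupoids $\iota_0\colon {\rm Hol}(X,\mathcal{F})\to {\rm Hol}(X,\mathcal{F}')$.

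The second step is to identify ${\rm Hol}(X,\mathcal{F}')$ with the pair groupoid of the components. Writing $X=\bigsqcup_{\alpha} X_\alpha$ for the decomposition of the compact manifold $X$ into its finitely many connected components, each $X_\alpha$ is a single leaf of $\mathcal{F}'$ of full dimension; such a leaf has trivial holonomy, since its transversal is $0$-dimensional. Consequently the holonomy cover of $X_\alpha$ is $X_\alpha$ itself and the restriction of ${\rm Hol}(X,\mathcal{F}')$ to $X_\alpha$ is the pair groupoid $X_\alpha\times X_\alpha$, whence ${\rm Hol}(X,\mathcal{F}')\cong\bigsqcup_\alpha X_\alpha\times X_\alpha$. (In the connected case this is just the standard fact that ${\rm Hol}(X,TX)=X\times X$.)

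Finally, I would exhibit the natural inclusion $j\colon\bigsqcup_\alpha X_\alpha\times X_\alpha\hookrightarrow X\times X$. Because $X\times X=\bigsqcup_{\alpha,\beta} X_\alpha\times X_\beta$, the image of $j$ is the union of the diagonal blocks, which is open and closed in $X\times X$; thus $j$ is an open embedding and plainly a morphism of Lie groupoids (both sides being pair groupoids on the relevant components), so in particular an immersion. The desired map is then $\iota:=j\circ\iota_0$, which is an immersion of Lie groupoids into $X\times X$ since a composition of immersions is an immersion.

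There is essentially no hard analytic content here: once Proposition \ref{prop:immersion} is available, the construction is entirely formal. The only point requiring genuine verification, and the one I would state most carefully, is the identification of ${\rm Hol}(X,\mathcal{F}')$ with the disjoint union of pair groupoids of the connected components, i.e.\ the vanishing of the holonomy of a top-dimensional leaf.
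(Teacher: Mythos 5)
Your proof is correct and follows exactly the route the paper intends: the corollary is stated as an immediate consequence of Proposition \ref{prop:immersion}, obtained by taking $\mathcal{F}'$ to be the top-dimensional foliation with $T\mathcal{F}'=TX$, whose holonomy groupoid is the pair groupoid (componentwise) since full-dimensional leaves have trivial holonomy. Your additional care with the disconnected case and the open embedding of the diagonal blocks $\bigsqcup_\alpha X_\alpha\times X_\alpha\hookrightarrow X\times X$ is a sound and welcome refinement of the same argument.
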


Given a manifold $M$ with foliated boundary $(\pa M,\mathcal{F})$ as above, we can first take
$Blup^+_{r,s} (M\times M,\partial M\times \partial M)\rightrightarrows M$ and then consider 
\begin{equation}\label{rs-blupphi} Blup^+_{r,s} \left({\rm Hol}(\pa M, \mathcal{F})\xrightarrow{\iota} Blup^+_{r,s} (M\times M,\partial M\times \partial M)\right)\end{equation}
explicitly described, up to a groupoid isomorphism, as:
\begin{equation}\label{rs-blupphi-bis}
\mathring{M}\times\mathring{M}\cup r^*N\mathcal{F}\times \bbR\,,\end{equation}
with $N\mathcal{F}:= T(\pa M)/T\mathcal{F}$ and  $r$ denoting the range map for the holonomy groupoid ${\rm Hol}(\pa M, \mathcal{F})$.
Here the groupoid structure is the pairs groupoid structure on $\mathring{M}\times\mathring{M}$, as usual, and on $ r^*N\mathcal{F}\times \bbR$ is given by 
\begin{equation}\label{groupoid-structure}
(\gamma,\xi, t)\cdot(\gamma', \eta, s)= (\gamma\cdot\gamma',\xi+\gamma(\eta),t+s).
\end{equation}
for $\gamma,\gamma'\in {\rm Hol}(\pa M, \mathcal{F})$,  $\xi\in N_{r(\gamma)}\mathcal{F}$, $\eta\in N_{r(\gamma')}\mathcal{F}$ and $t,s\in\RR$.
Notice that, since a foliation is locally given by a fibration and all the construction we used so far in this section are local, it is easy to prove that the normal bundle of the immersion ${\rm Hol}(\pa M, \mathcal{F})\xrightarrow{\iota} Blup^+_{r,s} (M\times M,\partial M\times \partial M)$ is $ r^*N\mathcal{F}\times \bbR\times \bbR$ and, exactly as for the ${\rm \Phi}$-calculus groupoid, that \eqref{rs-blupphi} is isomorphic to \eqref{rs-blupphi-bis}.


\subsection{The groupoid associated to a foliation degenerating on the boundary}$\;$\\
We can more generally consider a foliated manifold $(M,\mathcal{H})$ with non-empty boundary and with
$\mathcal{H}$ transverse to the boundary. We make the assumption that $\partial M$ is foliated by a foliation
$\mathcal{F}$ such that $\mathcal{F}\subset \mathcal{H}_{| \pa M}$. 
We assume that $M$ has dimension $n$, $\mathcal{H}$ has dimension $p$ and $\mathcal{F}$ has dimension $q$ with
$q\leq p-1$. In the previous subsection we have treated the case $p=n$.

Let $T\mathcal{F}$ be the tangent bundle of $\mathcal{F}$, a subbundle of $T(\pa M)$; let $T\mathcal{H}$ be the 
tangent bundle of $\mathcal{H}$ and consider the Lie algebra of vector fields
\begin{equation*}
\mathcal{V}_{\mathcal{F}} (M,\mathcal{H})=\{\xi\in C^\infty(M,T\mathcal{H})\cap \mathcal{V}_b (M),
\;\;\xi |_{\partial M} \in C^\infty (\partial M,T\mathcal{F})\;\; \text{ and } \xi x\in x^2 C^\infty (M)\}
\end{equation*}
This is a finitely generated projective $C^\infty (M)$-module and thus, according to Serre-Swan theorem, there
exists a vector bundle ${}^\mathcal{F} T\mathcal{H}$ whose sections are precisely given by 
$\mathcal{V}_{\mathcal{F}} (M,\mathcal{H})$.
Recall, see  \cite[Section 5]{hilsum-boundary}, that ${\rm Hol} (\partial M,\mathcal{H}_{| \partial M})$
is a closed and open subgroupoid of ${\rm Hol} ( M,\mathcal{H})_{| \partial M}$ and, by Proposition
\ref{prop:immersion} above, that there exists an immersion
$${\rm Hol} (\partial M,\mathcal{F})\xrightarrow{j} {\rm Hol} (\partial M,\mathcal{H}_{| \partial M})$$
that, together, give an immersion
$$\iota: {\rm Hol} (\partial M,\mathcal{F})\rightarrow {\rm Hol} ( M,\mathcal{H})_{| \partial M}.$$
We can thus consider 
$Blup^+_{r,s} \left( {\rm Hol} (M,\mathcal{H}), {\rm Hol} ( M,\mathcal{H})_{| \partial M} \right)\rightrightarrows M$ and then 
$$ Blup^+_{r,s} \left({\rm Hol}(\pa M, \mathcal{F})\xrightarrow{\iota} Blup^+_{r,s} \left( {\rm Hol} (M,\mathcal{H}), {\rm Hol} ( M,\mathcal{H})_{| \partial M} \right)\right)$$
which is  described as:
$$ {\rm Hol} (M,\mathcal{H})_{| \mathring{M}}\cup r^*N_\mathcal{F}^\mathcal{H}\times \bbR $$ with $N_\mathcal{F}^\mathcal{H}:= T\mathcal{H}_{|\pa M}/ T\mathcal{F}$  and  $r$ denoting the range map for the holonomy groupoid ${\rm Hol}(\pa M, \mathcal{F})$.
Analogously to the previous cases, the groupoid structure is given by that one of the holonomy groupoid in the interior and by
\eqref{groupoid-structure} on the boundary.

\begin{remark}
	We can equally make these constructions with the monodromy groupoid of our foliations, i.e. with
	${\rm Mon} (\pa M,\mathcal{F})$, ${\rm Mon} (M,\mathcal{H})$. In this case the analogue of Proposition 
	\ref{prop:immersion} already appears in \cite[Proposition 6.2]{MM}. 
\end{remark}

\subsection{(Pseudo)-differential operators, K-theory classes and positive scalar curvature}
Let $(M,\mathcal{H})$ be a foliated manifold with boundary with the foliation $\mathcal{H}$ degenerating 
into $(\pa M,\mathcal{F})$ at the boundary. 
The vector fields in $\mathcal{V}_{\mathcal{F} }(M,\mathcal{H})$, together with $C^\infty (M)$, generate an algebra of differential operators
on $M$ that we denote $\Diff^*_\mathcal{F} (M,\mathcal{H})$. In the case of $M$ being a manifold with foliated boundary
we use the notation $\Diff^*_\mathcal{F} (M)$ as in \cite{rochon-foliated}. If $D\in \Diff^*_\mathcal{F} (M,\mathcal{H})$ then $D$ lifts uniquely to a differential operator on the groupoid 
\begin{equation}\label{group-fol}
G_{\mathcal{F}} (M,\mathcal{H}):={\rm Hol} (M,\mathcal{H})_{| \mathring{M}}\cup r^*N_\mathcal{F}^\mathcal{H}\times \bbR 
\end{equation}
with
$N_\mathcal{F}^\mathcal{H}:= T\mathcal{H}_{|\pa M}/ T\mathcal{F}$  and  $r$ denoting the range map for the holonomy groupoid ${\rm Hol}(\pa M, \mathcal{F})$. With a customary abuse of notation we keep the same notation for this lift.
The normal operator $N (D)$ is, by definition, the restriction of (the lift) of the operator to $r^*N_\mathcal{F}^\mathcal{H}\times \bbR $.
This algebra of differential operators is of course a subalgebra of $\Psi^* (G_{\mathcal{F}} (M,\mathcal{H}))$ where,
as in Section \ref{sect:dirac-k}, we have enlarged the algebra of compactly supported operators pseudodifferential operators, $\Psi^*_c (G_{\mathcal{F}} (M,\mathcal{H}))$,
by the addition of the smoothing operators as in \cite{Vassout-jfa}.

We now proceed to define all the objects that have been defined previously in the case of a manifold with fibered boundary
and see how to get interesting K-theory classes. We shall be brief.

We consider the algebra $\Psi^0 (G_{\mathcal{F}} (M,\mathcal{H}))$.
Tangential ellipticity for elements in this algebra  is defined as usual, in terms of ${}^\mathcal{F} T\mathcal{H}$, whereas  full ellipticity is defined by requiring the normal operator to be invertible where invertibility is meant in the algebra $\Psi^0 (G_{\mathcal{F}} (M,\mathcal{H})_{|_{\pa M}})\equiv \Psi^0 (r^*N_\mathcal{F}^\mathcal{H}\times \bbR )$. Similar definitions can be given for
an operator of positive order. 
In order to define the fundamental class and the index class of a fully elliptic operator as well as the rho class 
of an invertible operator,
we 
introduce the relevant groupoids and associated $C^*$-algebras:

\begin{itemize} 
	\item  the adiabatic deformation $G_{\mathcal{F}} (M,\mathcal{H})_{ad}^{[0,1]}={}^\mathcal{F} T\mathcal{H}\times \{0\}\cup G_{\mathcal{F}} (M,\mathcal{H})\times [0,1] $ and its restriction to $M\times[0,1)$, denoted $G_{\mathcal{F}} (M,\mathcal{H})_{ad}^{[0,1)}$;
	\item the noncommutative tangent bundle ${}^\mathcal{F} T\mathcal{H}^{NC}$ defined as  the restriction of $G_{\mathcal{F}} (M,\mathcal{H})_{ad}^{[0,1]} $ to $\mathring{M}\times\{0\}\cup \pa M\times[0,1)$; it is equal to ${}^\mathcal{F} T\mathcal{H}\cup r^*\mathcal{N}\times\RR\times(0,1)$;
	\item the exact sequence of $C^*$-algebras:
	\begin{equation}\label{exact-foliated}
	\xymatrix{0\ar[r]& C^*(G_{\mathcal{F}} (M,\mathcal{H})_{|\mathring{M}}\otimes C_0(0,1))\ar[r]& C^*( G_{\mathcal{F}} (M,\mathcal{H})_{ad}^{[0,1)})\ar[r] &
		C^*({}^\mathcal{F} T\mathcal{H}^{NC})\ar[r]&0}.
	\end{equation}
\end{itemize}

Let $g$ be a foliated metric on $(\mathring{M},\mathcal{H})$, i.e. a metric on $T\mathcal{H}_{| \mathring{M}}$. We shall say that
$g$ is an {\it admissible} metric 
if $g$ extends to a metric 
on  the Lie algebroid ${}^\mathcal{F} T\mathcal{H}$. 

We shall 
work with special admissible metrics, that we call, as usual, {\it rigid} and that we proceed to define. Recall that the dimension of the leaves of $\mathcal{H}$ is $p$ and that
the dimension of the leaves of $\mathcal{F}$ is $q$, with $q\leq p-1$.
Let $U$ be a distinguished neighbourhood  for $(M,\mathcal{H})$ with $U\cap \pa M\not= \emptyset$.
We can assume that $U$ is homeomorphic to  $[0,1)_x\times F^q \times T^{p-q-1} \times S^{n-p}$ where all these
sets are open sets in euclidean spaces of the right dimension.
We shall also briefly write $U\simeq [0,1)_x\times F \times T \times S$.
Notice that in this notation
$T^{p-q-1}$ is a local transversal for $\mathcal{F}$ inside $\mathcal{H}_{| \pa M}$ and $S^{n-p}$ is a local transversal
for $\mathcal{H}$.\\
We shall say that the admissible metric $g$ is rigid if for each distinguished chart as above
$g_{| \mathring{U} } $ is given by a family of metrics $g(s)$ that can be written as 
$$g(s)=\frac{dx^2}{x^4}  + \frac{g_T (s)}{x^2}+ g_F (t,s) \,.$$
Notice that $g$ defines in a natural way a foliated metric $g_{\mathcal{F}}$ on $(\pa M,\mathcal{F})$,
locally given by the $g_F (t,s)$.

When necessary, we denote by $g_{\mathcal{F}}^{\mathcal{H}}$
an admissible rigid metric. If we are considering the foliated boundary case, then we employ the 
notation $g_{\mathcal{F}}$ for an admissible rigid metric in this context. The simple notation $g$ will also be used.

\begin{remark}
Whereas  a stratified space always carries a rigid  fibered corner metric $g_{\Phi}$, see \cite[Proposition 3.1]{ALMP:Witt},
 it is not true in general that the class of admissible rigid metrics $g_{\mathcal{F}}^{\mathcal{H}}$ is non-empty.
Indeed let $L$ be a leaf of the restriction of $\mathcal{H}$ to the boundary and notice that $L$ is foliated by $\mathcal{F}$, then the restriction of $g$ to $L$ would be a bundle-like metric with respect to $\mathcal{F}$ and there are examples of foliated manifolds that do not admit bundle-like metrics.
\end{remark}

Let us now assume that $ T\mathcal{H}$ and $ T\mathcal{F}$ have a spin structure. Then we can consider the spin Dirac $G_{\mathcal{F}} (M,\mathcal{H})$-operator $\slashed{D}$ associated to $g$, as in Remark \ref{dirac-on-groupoids}.

If the induced metric  $g_{\mathcal{F}}$  has positive scalar curvature along the leaves of $\mathcal{F}$, then as in 
Proposition \ref{fullellipticity}, we can prove that $N(\slashed{D})$ is invertible and so that $\slashed{D}$ is fully elliptic.
Following the general procedure used in the previous sections we can then define the following classes and prove the 
properties of these listed below

\begin{itemize}
	\item the noncommutative symbol $$\sigma_{nc} (\slashed{D})\in K_* (C^*({}^\mathcal{F} T\mathcal{H}^{NC}))$$
	defined as in Definition \ref{sigmanc};
	\item the (adiabatic) index class $$\Ind^{ad} (\slashed{D})\in K_* (C^*(G_{\mathcal{F}} (M,\mathcal{H})_{|\mathring{M}}))$$
	obtained as the image of $\sigma_{nc} (\slashed{D})$ through the connecting homomorphism associated to \eqref{exact-foliated};
	observe that this homomorphism has an explicit form as in \eqref{index-homo-nc};
	\item the index class given by the connecting homomorphism associated to the analogue of \eqref{f.e.extention};
	\item the equality of these two index classes, proved as in Theorem \ref{equality-index-classes};
	\item if $g$ has positive scalar curvature everywhere along the leaves of $(\mathring{M},\mathcal{H})$ then we can define a rho class $$\rho (g)\in K_* (C^*(G_{\mathcal{F}} (M,\mathcal{H})_{ad}^{[0,1)}))$$
	defined as in Definition \ref{rho-definition};
	\item if  $\mathcal{R}^+_{\mathcal{F}}(M,\mathcal{H})$ denotes the set of 
	 admissible rigid metrics of positive scalar curvature and $\tilde{\pi}_0 (\mathcal{R}^+_{\mathcal{F}}(M,\mathcal{H}))$
	 the associated set of concordance classes.  then the rho class gives a well defined map
	$$\rho: \tilde{\pi}_0 (\mathcal{R}^+_{\mathcal{F}}(M,\mathcal{H}))\to K_*( C^*(G_{\mathcal{F}} (M,\mathcal{H})_{ad}^{[0,1)})).$$ 
	This is proved as in Theorem \ref{deloc-aps} following the delocalized APS index theorem for 
	general Lie groupoids established in \cite[Theorem 3.6]{Zenobi:Ad}

\end{itemize}

\section{Appendix: non-Hausdorff groupoids}
For the benefit of the reader, let us discuss in some detail the non Hausdorff case. At first glance it would seem
rather involved, but in fact it is not.
To get a clear intuition of what happens when we blow-up a non injective immersion, it is enough consider the local situation, in particular we are going to dissect the following  very simple situation. 
Let us consider the immersion of Lie groupoids
$$\iota\colon\bbZ_2\to \bbR\times\bbR$$
which sends every element of the group $\bbZ_2$ to $(0,0)$.
The deformation to the normal cone of this immersion is
$$\iota^*\mathcal{N}_{\iota(\bbZ_2)}^{\bbR\times\bbR}\cup \bbR\times\bbR\times\bbR^*$$
that is noting else than the quotient of
$  \bbZ_2\times DNC(\bbR\times\bbR,\{ (0,0)\})$
by the equivalence relation such that $([0],(x,y),t)\sim ([1],(x,y),t)$ for $t\neq 0$.
As a set it is equal to $$ \bbZ_2\times \mathbf{R^2}\times\{0\}\cup \bbR\times\bbR\times \bbR^*. $$
Notice that the non-Hausdorff smooth structure is quotient structure induced by that of the two copies of $DNC(\bbR\times\bbR, \{ (0,0)\})$ and this one is given by imposing that
the map $\psi\colon  \bbR\times\bbR\times\bbR\to DNC(\bbR\times\bbR, \{ (0,0)\})$
$$
\psi(x,y,t)=\begin{cases}(x,y,0)\,\quad\mathrm{if}\, t=0\\
(x,ty,t) \quad\mathrm{if}\, t\neq0 \end{cases}
$$
is smooth.
Moreover it is easy to prove that the Lie algebroid of $DNC(\bbZ_2\xrightarrow{\iota}\bbR\times\bbR)$ is isomorphic to 
$\bbR\times\mathbf{R}$ and that the module of its sections is generated by the vector field $t\frac{\partial}{\partial t}$ over $\bbR$.

Now if we proceed to the $Blup$ construction, we have to remove suitable subsets and then quotient by the action of $\bbR^*$, as explained in \ref{sing-fol}. Actually we are going to restrict our attention to the $Blup^+_{r,s}$ construction and we obtain the following non-Hausdorff Lie groupoid
$$\bbZ_2\times\mathring{\mathbf{S}}^1\cup\bbR^*_+\times\bbR^*_+\equiv\bbZ_2\times\bbR\cup\bbR^*_+\times\bbR^*_+$$
that is nothing but the quotient of $\bbZ_2\times Blup^+_{r,s}(\bbR\times\bbR, \{ (0,0)\})$ by the equivalence relation such that
$([0], (x,y))\sim([1], (x,y))$ for all $(x,y)\in \bbR^*_+\times\bbR^*_+$, with the induced smooth structure just as for the deformation to the normal cone.

Notice that, following \cite[Section 2.2.2]{these-month}, since the  pseudodifferential operators are defined, up to elements in the C*-algebra, as conormal distribution  supported near the diagonal, it turns out that 
$\Psi_c(\bbZ_2\times\bbR\cup\bbR^*_+\times\bbR^*_+)$ is equal to $\Psi_c(\{[0]\}\times\bbR\cup\bbR^*_+\times\bbR^*_+)\equiv\Psi_c(Blup^+_{r,s}(\bbR\times\bbR, \{ (0,0)\})$ up to  elements in $C^*(\bbZ_2\times\bbR\cup\bbR^*_+\times\bbR^*_+)$.

In other words, since the objects and the $s$-fibers are Hausdorff manifolds, if we see a pseudodifferential operator on the groupoid as a family of equivariant pseudodifferential operators on the $s$-fibers, parametrized by the manifold of the objects, the property of being not Hausdorff of the whole groupoid is not involved in this definition.


\bibliography{gsssido}

\providecommand{\bysame}{\leavevmode\hbox to3em{\hrulefill}\thinspace}
\providecommand{\MR}{\relax\ifhmode\unskip\space\fi MR }
\providecommand{\MRhref}[2]{%
  \href{http://www.ams.org/mathscinet-getitem?mr=#1}{#2}
}
\providecommand{\href}[2]{#2}
\begin{thebibliography}{CRLM14}

\bibitem[ALMP12]{ALMP:Witt}
Pierre Albin, \'Eric Leichtnam, Rafe Mazzeo, and Paolo Piazza, \emph{The
  signature package on {W}itt spaces}, Ann. Sci. \'Ec. Norm. Sup\'er. (4)
  \textbf{45} (2012), no.~2, 241--310. \MR{2977620}

\bibitem[ALMP16]{ALMP:Hodge}
\bysame, \emph{Hodge theory on {C}heeger spaces}, published on-line in Journal
  f\"ur die reine und angewandte Mathematik (Crelle Journal). ISSN (Online)
  1435-5345, ISSN (Print) 0075-4102, DOI: 10.1515/crelle-2015-0095, 2016.

\bibitem[ALN04]{ALN-geometry}
Bernd Ammann, Robert Lauter, and Victor Nistor, \emph{On the geometry of
  {R}iemannian manifolds with a {L}ie structure at infinity}, Int. J. Math.
  Math. Sci. (2004), no.~1-4, 161--193. \MR{2038804}

\bibitem[ALN07]{ALN}
\bysame, \emph{Pseudodifferential operators on manifolds with a {L}ie structure
  at infinity}, Ann. of Math. (2) \textbf{165} (2007), no.~3, 717--747.
  \MR{2335795}

\bibitem[AM85]{almeida-molino}
Rui Almeida and Pierre Molino, \emph{Suites d'{A}tiyah et feuilletages
  transversalement complets}, C. R. Acad. Sci. Paris S\'er. I Math.
  \textbf{300} (1985), no.~1, 13--15. \MR{778785}

\bibitem[AW]{AW}
Sara Azzali and Charlotte Wahl, \emph{Two-cocycle twists and
  {A}tiyah-{P}atodi-{S}inger index theory}, arXiv:1312.6373.

\bibitem[BCH94]{BCH-proper}
Paul Baum, Alain Connes, and Nigel Higson, \emph{Classifying space for proper
  actions and {$K$}-theory of group {$C^\ast$}-algebras}, {$C^\ast$}-algebras:
  1943--1993 ({S}an {A}ntonio, {TX}, 1993), Contemp. Math., vol. 167, Amer.
  Math. Soc., Providence, RI, 1994, pp.~240--291. \MR{1292018}

\bibitem[BG95]{BoGi}
Boris Botvinnik and Peter~B. Gilkey, \emph{The eta invariant and metrics of
  positive scalar curvature}, Math. Ann. \textbf{302} (1995), no.~3, 507--517.
  \MR{96f:58159}

\bibitem[BHS91]{BHS}
J.-P. Brasselet, G.~Hector, and M.~Saralegi, \emph{Th\'eor\`eme de de {R}ham
  pour les vari\'et\'es stratifi\'ees}, Ann. Global Anal. Geom. \textbf{9}
  (1991), no.~3, 211--243.

\bibitem[Bla98]{blackadar}
Bruce Blackadar, \emph{{$K$}-theory for operator algebras}, second ed.,
  Mathematical Sciences Research Institute Publications, vol.~5, Cambridge
  University Press, Cambridge, 1998. \MR{1656031 (99g:46104)}

\bibitem[BM15]{BenMat}
Moulay~Tahar Benameur and Varghese Mathai, \emph{Spectral sections, twisted rho
  invariants and positive scalar curvature}, J. Noncommut. Geom. \textbf{9}
  (2015), no.~3, 821--850. \MR{3420533}

\bibitem[BR15]{BR}
Moulay-Tahar Benameur and Indrava Roy, \emph{The {H}igson-{R}oe exact sequence
  and {$\ell^2$} eta invariants}, J. Funct. Anal. \textbf{268} (2015), no.~4,
  974--1031. \MR{3296587}

\bibitem[BS16]{Bohlen-Schrohe}
Karsten Bohlen and Elmar Schrohe, \emph{Getzler rescaling via adiabatic
  deformation and a renormalized local index formula}, Preprint. Available on
  arxiv., 2016.

\bibitem[BZ17]{Barcenas-Zeidler}
Noe Barcenas and Rudolf Zeidler, \emph{Positive scalar curvature and low-degree
  group homology}, available online at arXiv:1709.0721, 2017.

\bibitem[CF03]{Cr-Fe}
Marius Crainic and Rui~Loja Fernandes, \emph{Integrability of {L}ie brackets},
  Ann. of Math. (2) \textbf{157} (2003), no.~2, 575--620. \MR{1973056}

\bibitem[CM90]{CM}
Alain Connes and Henri Moscovici, \emph{Cyclic cohomology, the {N}ovikov
  conjecture and hyperbolic groups}, Topology \textbf{29} (1990), no.~3,
  345--388. \MR{92a:58137}

\bibitem[Con82]{connes-survey}
A.~Connes, \emph{A survey of foliations and operator algebras}, Operator
  algebras and applications, {P}art {I} ({K}ingston, {O}nt., 1980), Proc.
  Sympos. Pure Math., vol.~38, Amer. Math. Soc., Providence, R.I., 1982,
  pp.~521--628. \MR{679730}

\bibitem[Con94]{Co}
Alain Connes, \emph{Noncommutative geometry}, Academic Press Inc., San Diego,
  CA, 1994. \MR{95j:46063}

\bibitem[CRLM14]{CLM}
P.~Carrillo-Rouse, J.~M. Lescure, and B.~Monthubert, \emph{A cohomological
  formula for the {A}tiyah-{P}atodi-{S}inger index on manifolds with boundary},
  J. Topol. Anal. \textbf{6} (2014), no.~1, 27--74. \MR{3190137}

\bibitem[CS84]{CS}
A.~Connes and G.~Skandalis, \emph{The longitudinal index theorem for
  foliations}, Publ. Res. Inst. Math. Sci. \textbf{20} (1984), no.~6,
  1139--1183. \MR{775126}

\bibitem[Deb01]{debord-jdg}
Claire Debord, \emph{Holonomy groupoids of singular foliations}, J.
  Differential Geom. \textbf{58} (2001), no.~3, 467--500. \MR{1906783}

\bibitem[DL09]{DL-poincare}
Claire Debord and Jean-Marie Lescure, \emph{{$K$}-duality for stratified
  pseudomanifolds}, Geom. Topol. \textbf{13} (2009), no.~1, 49--86.
  \MR{2469513}

\bibitem[DL10]{DL}
\bysame, \emph{Index theory and groupoids}, Geometric and topological methods
  for quantum field theory, Cambridge Univ. Press, Cambridge, 2010,
  pp.~86--158. \MR{2648649}

\bibitem[DLR15]{DLR}
Claire Debord, Jean-Marie Lescure, and Fr\'ed\'eric Rochon,
  \emph{Pseudodifferential operators on manifolds with fibred corners}, Ann.
  Inst. Fourier (Grenoble) \textbf{65} (2015), no.~4, 1799--1880. \MR{3449197}

\bibitem[DS17]{DSk}
Claire Debord and Georges Skandalis, \emph{Blowup constructions for lie
  groupoids and a boutet de monvel type calculus}, arXiv:1705.09588 (2017).

\bibitem[Hil02]{hilsum-boundary}
Michel Hilsum, \emph{Hilbert modules of foliated manifolds with boundary},
  Foliations: geometry and dynamics ({W}arsaw, 2000), World Sci. Publ., River
  Edge, NJ, 2002, pp.~315--332. \MR{1882776}

\bibitem[HLS02]{HLS}
N.~Higson, V.~Lafforgue, and G.~Skandalis, \emph{Counterexamples to the
  {B}aum-{C}onnes conjecture}, Geom. Funct. Anal. \textbf{12} (2002), no.~2,
  330--354. \MR{1911663}

\bibitem[HR05a]{higson-roeI}
Nigel Higson and John Roe, \emph{Mapping surgery to analysis. {I}. {A}nalytic
  signatures}, $K$-Theory \textbf{33} (2005), no.~4, 277--299. \MR{MR2220522
  (2007c:19004)}

\bibitem[HR05b]{higson-roeII}
\bysame, \emph{Mapping surgery to analysis. {II}. {G}eometric signatures},
  $K$-Theory \textbf{33} (2005), no.~4, 301--324. \MR{MR2220523 (2007c:19005)}

\bibitem[HR05c]{higson-roeIII}
\bysame, \emph{Mapping surgery to analysis. {III}. {E}xact sequences},
  $K$-Theory \textbf{33} (2005), no.~4, 325--346. \MR{MR2220524 (2007c:19006)}

\bibitem[HR10]{higson-roe4}
\bysame, \emph{{$K$}-homology, assembly and rigidity theorems for relative eta
  invariants}, Pure Appl. Math. Q. \textbf{6} (2010), no.~2, Special Issue: In
  honor of Michael Atiyah and Isadore Singer, 555--601. \MR{2761858
  (2011k:58030)}

\bibitem[HS87]{hs-morphismes}
Michel Hilsum and Georges Skandalis, \emph{Morphismes {$K$}-orient\'es
  d'espaces de feuilles et fonctorialit\'e en th\'eorie de {K}asparov
  (d'apr\`es une conjecture d'{A}. {C}onnes)}, Ann. Sci. \'Ecole Norm. Sup. (4)
  \textbf{20} (1987), no.~3, 325--390. \MR{90a:58169}

\bibitem[Kas95]{Kasp-Conspectus}
G.~G. Kasparov, \emph{{$K$}-theory, group {$C^*$}-algebras, and higher
  signatures (conspectus)}, Novikov conjectures, index theorems and rigidity,
  {V}ol.\ 1 ({O}berwolfach, 1993), London Math. Soc. Lecture Note Ser., vol.
  226, Cambridge Univ. Press, Cambridge, 1995, pp.~101--146. \MR{1388299}

\bibitem[Les]{lescure-hdr}
Jean-Marie Lescure, \emph{Espaces stratifi\'{e}s ,groupo\"{\i}des et
  th\'{e}orie de l'indice}, Habilitation \`{a} Diriger des recherches en
  math\'{e}matiques.

\bibitem[LM89]{lawson89:_spin}
Blaine Lawson and Marie-Lousie Michelsohn, \emph{Spin geometry}, Princeton
  University Press, 1989.

\bibitem[LN01]{LN-approach}
Robert Lauter and Victor Nistor, \emph{Analysis of geometric operators on open
  manifolds: a groupoid approach}, Quantization of singular symplectic
  quotients, Progr. Math., vol. 198, Birkh\"auser, Basel, 2001, pp.~181--229.
  \MR{1938556}

\bibitem[Lot92]{LottII}
John Lott, \emph{Higher eta-invariants}, $K$-Theory \textbf{6} (1992), no.~3,
  191--233. \MR{93j:58129}

\bibitem[LP97]{LPMEMOIRS}
Eric Leichtnam and Paolo Piazza, \emph{The {$b$}-pseudodifferential calculus on
  {G}alois coverings and a higher {A}tiyah-{P}atodi-{S}inger index theorem},
  M\'em. Soc. Math. Fr. (N.S.) (1997), no.~68, iv+121. \MR{99d:58168}

\bibitem[LP01]{LPPSC}
\bysame, \emph{On higher eta-invariants and metrics of positive scalar
  curvature}, $K$-Theory \textbf{24} (2001), no.~4, 341--359. \MR{2002k:58051}

\bibitem[Mel93]{Melrose}
Richard~B. Melrose, \emph{The {A}tiyah-{P}atodi-{S}inger index theorem},
  Research Notes in Mathematics, vol.~4, A K Peters Ltd., Wellesley, MA, 1993.
  \MR{96g:58180}

\bibitem[MM98]{MMPHI}
Rafe Mazzeo and Richard~B. Melrose, \emph{Pseudodifferential operators on
  manifolds with fibred boundaries}, Asian J. Math. \textbf{2} (1998), no.~4,
  833--866, Mikio Sato: a great Japanese mathematician of the twentieth
  century. \MR{1734130}

\bibitem[MM03]{MM}
I.~Moerdijk and J.~Mr{\v{c}}un, \emph{Introduction to foliations and {L}ie
  groupoids}, Cambridge Studies in Advanced Mathematics, vol.~91, Cambridge
  University Press, Cambridge, 2003. \MR{2012261 (2005c:58039)}

\bibitem[Mon]{these-month}
Bertrand Monthubert, \emph{Th\`ese de doctorat : {G}roupo\"ides et calcul
  pseudo-diff\'erentiel sur les vari\'et\'es \`acoins},
  https://www.math.univ-toulouse.fr/{\raisebox{-.6ex}{\symbol{"7E}}}monthube/recherche.php.

\bibitem[Mon99]{monthubert-pams}
\bysame, \emph{Pseudodifferential calculus on manifolds with corners and
  groupoids}, Proc. Amer. Math. Soc. \textbf{127} (1999), no.~10, 2871--2881.
  \MR{1600121}

\bibitem[MP97]{MP}
B.~Monthubert and F.~Pierrot, \emph{Indice analytique et groupo\"\i des de
  {L}ie}, C. R. Acad. Sci. Paris S\'er. I Math. \textbf{325} (1997), no.~2,
  193--198. \MR{1467076}

\bibitem[NWX99]{NWX}
Victor Nistor, Alan Weinstein, and Ping Xu, \emph{Pseudodifferential operators
  on differential groupoids}, Pacific J. Math. \textbf{189} (1999), no.~1,
  117--152. \MR{1687747}

\bibitem[PS07]{PS2}
Paolo Piazza and Thomas Schick, \emph{Groups with torsion, bordism and rho
  invariants}, Pacific J. Math. \textbf{232} (2007), no.~2, 355--378.
  \MR{2366359 (2008j:58035)}

\bibitem[PS14]{PS-Stolz}
\bysame, \emph{Rho-classes, index theory and {S}tolz' positive scalar curvature
  sequence}, J. Topol. \textbf{7} (2014), no.~4, 965--1004. \MR{3286895}

\bibitem[Ren80]{renault}
Jean Renault, \emph{A groupoid approach to {$C^{\ast} $}-algebras}, Lecture
  Notes in Mathematics, vol. 793, Springer, Berlin, 1980. \MR{584266}

\bibitem[Roc12]{rochon-foliated}
Fr\'ed\'eric Rochon, \emph{Pseudodifferential operators on manifolds with
  foliated boundaries}, J. Funct. Anal. \textbf{262} (2012), no.~3, 1309--1362.
  \MR{2863864}

\bibitem[Ros83]{RosI}
Jonathan Rosenberg, \emph{{$C\sp{\ast} $}-algebras, positive scalar curvature,
  and the {N}ovikov conjecture}, Inst. Hautes \'Etudes Sci. Publ. Math. (1983),
  no.~58, 197--212 (1984). \MR{85g:58083}

\bibitem[Ska]{SkExt}
Georges Skandalis, \emph{On the strong {E}xt bifunctor},
  http://webusers.imj-prg.fr/{\raisebox{-.6ex}{\symbol{"7E}}}georges.skandalis/Publications/StrongExt.pdf.

\bibitem[Ska85]{skandalis-exact}
\bysame, \emph{Exact sequences for the {K}asparov groups of graded algebras},
  Canad. J. Math. \textbf{37} (1985), no.~2, 193--216. \MR{780621}

\bibitem[Tho69]{Thom:Ensembles}
R.~Thom, \emph{Ensembles et morphismes stratifi\'es}, Bull. Amer. Math. Soc.
  \textbf{75} (1969), 240--284. \MR{0239613}

\bibitem[Vas06]{Vassout-jfa}
St\'ephane Vassout, \emph{Unbounded pseudodifferential calculus on {L}ie
  groupoids}, J. Funct. Anal. \textbf{236} (2006), no.~1, 161--200.
  \MR{2227132}

\bibitem[Ver84]{Verona}
Andrei Verona, \emph{Stratified mappings---structure and triangulability},
  Lecture Notes in Mathematics, vol. 1102, Springer-Verlag, Berlin, 1984.
  \MR{771120}

\bibitem[WO93]{WO}
N.~E. Wegge-Olsen, \emph{{$K$}-theory and {$C\sp *$}-algebras}, Oxford Science
  Publications, The Clarendon Press Oxford University Press, New York, 1993, A
  friendly approach. \MR{95c:46116}

\bibitem[WY15]{WY-finite}
Shmuel Weinberger and Guoliang Yu, \emph{Finite part of operator {$K$}-theory
  for groups finitely embeddable into {H}ilbert space and the degree of
  nonrigidity of manifolds}, Geom. Topol. \textbf{19} (2015), no.~5,
  2767--2799. \MR{3416114}

\bibitem[XY14]{Xie-Yu}
Zhizhang Xie and Guoliang Yu, \emph{Positive scalar curvature, higher rho
  invariants and localization algebras}, Adv. Math. \textbf{262} (2014),
  823--866. \MR{3228443}

\bibitem[XY17]{Xie-Yu-moduli}
\bysame, \emph{Higher rho invariants and the moduli space of positive scalar
  curvature metrics}, Adv. Math. \textbf{307} (2017), 1046--1069. \MR{3590536}

\bibitem[XYZ17]{XYZ}
Zhizhang Xie, Guoliang Yu, and Rudolph Zeidler, \emph{On the range of the
  relative higher index and the higher rho-invariant for positive scalar
  curvature}, available online at arXiv:1712.03722, 2017.

\bibitem[Zei16]{zeidler-jtop}
Rudolf Zeidler, \emph{Positive scalar curvature and product formulas for
  secondary index invariants}, J. Topol. \textbf{9} (2016), no.~3, 687--724.
  \MR{3551834}

\bibitem[Zen]{zenobi-thesis}
Vito~Felice Zenobi, \emph{Secondary invariants in {K}-theory}, Ph.D. Thesis.
  March 2016. Universit\'e Paris 7 and Sapienza Universit\`a di Roma.

\bibitem[Zen16]{Zenobi:Ad}
\bysame, \emph{Adiabatic groupoids and secondary invariants in {K}-theory},
  available online at arXiv:1609.08015, 2016.

\bibitem[Zen17]{ZenJTA}
\bysame, \emph{Mapping the surgery exact sequence for topological manifolds to
  analysis}, J. Topol. Anal. \textbf{9} (2017), no.~2, 329--361. \MR{3622237}

\end{thebibliography}
\bibliographystyle{amsalpha}

\end{document}